\theoremstyle{definition}
\newtheorem{defin}{Definition}[section]
\newtheorem{rmk}[defin]{Remark}
\theoremstyle{plane}
\newtheorem{thm}[defin]{Theorem}
\newtheorem{prop}[defin]{Proposition}
\newtheorem{cor}[defin]{Corollary}
\newtheorem{lemma}[defin]{Lemma}
\newcommand{\tbf}{\textbf}
\newcommand{\tsl}{\textsl}
\newcommand{\mc}{\mathcal}
\newcommand{\mf}{\mathfrak}
\newcommand{\veps}{\varepsilon}
\newcommand{\what}{\widehat}
\newcommand{\vphi}{\varphi}
\newcommand{\oline}{\overline}
\newcommand{\s}{\sigma}
\renewcommand{\t}{\tau}
\renewcommand{\o}{\omega}
\newcommand{\lan}{\langle}
\newcommand{\ran}{\rangle}
\newcommand{\R}{\mathbb{R}}
\newcommand{\N}{\mathbb{N}}
\newcommand{\Z}{\mathbb{Z}}
\newcommand{\T}{\mathbb{T}}
\renewcommand{\div}{{\rm div}\,}
\newcommand{\curl}{{\rm curl}\,}
\newcommand{\Id}{{\rm Id}\,}
\newcommand{\supp}{{\rm supp}\,}
\newcommand{\dt}{ \, {\rm d} t}
\newcommand{\weak}{{\rm w}}
\def\d{\partial}
\def\div{{\rm div}\,}
\newcommand{\dd}{{\rm d}}
\begin{document}

\title{\textsc{\Large{\textbf{Geometric blow-up criteria for the non-homogeneous incompressible Euler equations in 2-D}}}}

\author{\normalsize \textsl{Francesco Fanelli}$\,^{1,2,3}$ %\qquad$ and $\qquad$
%\textsl{Adrien Tendani-Soler}$\,^{2}$
\vspace{.5cm} \\
\footnotesize{$\,^{1}\;$ \textsc{BCAM -- Basque Center for Applied Mathematics}} \\ % \vspace{.1cm} \\
{\footnotesize Alameda de Mazarredo 14, E-48009 Bilbao, Basque Country, SPAIN} \vspace{.2cm} \\
\footnotesize{$\,^{2}\;$ \textsc{Ikerbasque -- Basque Foundation for Science}} \\  %\vspace{.1cm} \\
{\footnotesize Plaza Euskadi 5, E-48009 Bilbao, Basque Country, SPAIN} \vspace{.2cm} \\
\footnotesize{$\,^{3}\;$ \textsc{Universit\'e Claude Bernard Lyon 1}, {\it Institut Camille Jordan -- UMR 5208}} \\ %  \vspace{.1cm}\\
{\footnotesize 43 blvd. du 11 novembre 1918, F-69622 Villeurbanne cedex, FRANCE} \vspace{.3cm} \\
%
%\footnotesize{$\,^{2}\;$ \textsc{Universit\'e de Bourgogne}}  \vspace{.1cm} \\
%{\footnotesize \it Laboratoire Interdisciplinaire Carnot de Bourgogne}  \vspace{.1cm}\\
%{\footnotesize 9 avenue Alain Savary, BP 47870, F-21078 Dijon cedex, FRANCE} \vspace{.3cm} \\
%
\footnotesize{Email address: \ttfamily{ffanelli@bcamath.org}}%, $\;$
%\footnotesize{$\,^{2}\;$\ttfamily{adrien.tendani-soler@math.u-bordeaux.fr}}
\vspace{.2cm}
}

\date\today

\maketitle

%%%%%%%%%%%%%%%%%%%%%%%%%%%%%%%%%%%%%%%%%%%%%%%%%%%%%%%%%%%%%%%%%%%%%%%%%%%%%%%%%%%%%%%
\subsubsection*{Abstract}
{\footnotesize 

This paper concerns the study of the incompressible Euler equations with variable density, in the case of space dimension $d=2$.
%The evolution of the velocity field $u$ of the fluid is coupled with t
Contrarily to their homogeneous (constant density) counterpart, those equations are not known to be well-posed globally in time.
A classical blow-up/continuation criterion for smooth solutions relies on the control of the Lipschitz norm
of the velocity field $u$.

Here we show that, for establishing blow-up or continuation of solutions, it is enough to determine a control of
$\nabla u$ only along the direction $X=\nabla^\perp\rho$, where $\rho$ represents the density of the fluid.
Our results deal with both the subcritical regularity and critical regularity frameworks. They rely on a novel approach to study regularity of solutions
for the density-dependent incompressible Euler equations.
Besides, they allow to recover the global well-posedness for $\rho\equiv {\rm cst}$ as a particular case.

}

\paragraph*{\small 2020 Mathematics Subject Classification:}{\footnotesize 35Q31 % PDEs / Eq. of math. phys. / Euler equations
(primary);
35B60, % PDE / Qualitative properties /  Continuation and prolongation of solutions
76B70 % Fluid mechanics / Inviscid incompressible fluids / Stratification effects in inviscid fluids
(secondary).}

\paragraph*{\small Keywords: }{\footnotesize incompressible Euler equations; density variations; geometric regularity; blow-up and continuation criteria.
%; logarithmic Besov spaces.
}

%{\footnotesize
%
%\tableofcontents
%
%}

\section{Introduction} \label{s:intro}

In this paper, we consider the  \emph{non-homogeneous incompressible Euler equations}, namely the system
\begin{equation} \label{eq:dd-E}
\left\{\begin{array}{l}
\d_t\rho\,+\,u\cdot\nabla\rho\,=\,0 \\[1ex]
\rho\,\d_tu\,+\,\rho\,u\cdot\nabla u\,+\,\nabla\Pi\,=\,0 \\[1ex]
\div u\,=\,0\,.
       \end{array}
\right.
\end{equation}
This system can be used to describe the motion of an incompressible inviscid fluid presenting density variations. The first equation is derived from the principle of
conservation of mass; the second equation can be derived \tsl{e.g.} from the Newton law applied to an infinitesimal fluid parcel
and represents the conservation of linear momentum; finally, the last equation encodes the incompressibility assumption on the flow.

We define system \eqref{eq:dd-E} for
%The unknowns appearing in \eqref{eq:dd-E}, namely $\rho$, $u$ and $\nabla\Pi$, are functions of the time variable $t$ and the space variable
%$x$. Since on the one hand we are interested in the initial-value problem related to \eqref{eq:dd-E} and, on the other hand,
%we want to neglect boundary effects, we set the above system for
\[
(t,x)\,\in\,\R_+\times\Omega\,,\qquad\qquad \mbox{ with }\quad \Omega\,=\,\R^d\;\mbox{ or }\;\T^d\,,
\]
where $t$ is the time variable and $x$ the space variable. For the sake of the present discussion, the space dimension $d$ satisfies $d\geq2$. However, in our
study we will focus on the case $d=2$ only.

In the system above, the scalar function $\rho=\rho(t,x)\geq0$ represents the density of the fluid, the vector field $u=u(t,x)\in\R^d$ its velocity field and
the scalar field $\Pi=\Pi(t,x)\in\R$ its pressure. The term $\nabla\Pi$ may be interpreted as a Lagrangian multiplier associated to
the divergence-free constraint on $u$.

We are interested on the initial-value problem for equations \eqref{eq:dd-E}.
Therefore, we supplement them with the initial datum 
\begin{equation} \label{eq:in-datum}
 \big(\rho,u\big)_{|t=0}\,=\,\big(\rho_0,u_0\big)\,,\qquad\qquad \mbox{ with }\qquad %0<\rho_*\leq\rho_0\leq\rho^*\,,  
 \div u_0\,=\,0\,.
\end{equation}
Throughout this work, we assume \emph{absence of vacuum}. This means that there exist two positive constants $0<\rho_*\leq \rho^*$ such that
\begin{equation} \label{eq:vacuum}
 0<\rho_*\leq\rho_0\leq\rho^*\,.
\end{equation}

The goal of this paper is to establish \emph{geometric criteria} ensuring continuation, or rather blow-up, of regular solutions to equations \eqref{eq:dd-E}.
This means that, instead of looking for a control of the whole gradient of the velocity field, our criteria will require regularity only along a certain direction,
that is the one which is tangent to the level lines of the the density.
In order to motivate our study, let us given an overview of known results about the well-posedness theory and continuation criteria for system \eqref{eq:dd-E}.
We refer to Subsection \ref{ss:results} for the statement of our main results. 

\subsection{Review of the well-posedness theory} \label{ss:previous}

On the mathematical side, equations \eqref{eq:dd-E} retain, to some extent, the structure of a quasi-linear hyperbolic system. %, and more precisely
%of a quasi-linear system of transport equations.
Of course, this is not completely true \tsl{strictu senso}, because (as is well-known in incompressible fluid dynamics)
the pressure gradient introduces non-local effects in the system.
However, at least under assumption \eqref{eq:vacuum} of absence of vacuum, the Euler system can be seen as a coupling of transport equations.
This observation hints to the fact that, for recovering well-posedness of equations \eqref{eq:dd-E}, it is natural to
consider functional spaces which are continuously embedded in the space $W^{1,\infty}$ of globally Lipschitz functions.

Without entering too much into the details, 
%For instance, Danchin in \cite{D_2006} considered the $H^s$ framework (with $s>1+d/2$) and also studied the inviscid limit of the density-dependent
%Navier-Stokes equations to the Euler equations \eqref{eq:dd-E} in the case of absence of boundaries of the space domain.
%The same author considered in \cite{D1} the setting of Besov spaces $B^s_{p,r}$, with $1<p<+\infty$. The case
we only mention here results in the general framework of Besov spaces $B^s_{p,r}$, which includes the Sobolev class $H^s\equiv B^s_{2,2}$ and the H\"older
class $C^{k,\alpha}\equiv B^{k+\alpha}_{\infty,\infty}$ (with $k\in\N$ and $\alpha\in\,]0,1[\,$) as special cases.
Recall that the embedding $B^s_{p,r}\hookrightarrow W^{1,\infty}$ holds true if and only if the triplet of indices
$(s,p,r)\in\R\times[1,+\infty]\times[1,+\infty]$ satisfies the condition
\begin{equation} \label{cond:Lipschitz}
s\,>\,1\,+\,\frac{d}{p}\qquad\qquad\quad \mbox{ or }\qquad\qquad\quad s\,=\,1\,+\,\frac{d}{p}\quad \mbox{ and }\quad r\,=\,1\,.
\end{equation}
The case of Besov spaces with integrability index $1<p<+\infty$ was considered by Danchin in \cite{D1}.
The endpoint case $p=+\infty$ was covered slightly later, in \cite{D:F}, in both the finite energy and infinite energy situations:
respectively, either $u_0\in L^2$, or
$u_0\in L^p$ and $\nabla u_0\in L^q$ with $1/p+1/q\geq1/2$. Those conditions revealed to be important when dealing with
the pressure term, in order to guarantee the property $\nabla\Pi \in L^2$ (a property which seems somehow necessary in the study).
Paper \cite{F_2012} weakened the regularity assumptions by working in the context of
tangential (also dubbed ``striated'') regularity assumptions \tsl{\`a la Chemin} \cite{Ch_1991, Ch_1993, Ch_1995}.

We refer to the introduction of the above mentioned papers for a more complete overview of the literature concerning the well-posedness theory
for equations \eqref{eq:dd-E}.
A general well-posedness theorem, encompassing the results from \cite{D1, D:F}, can be stated in the following way
(this corresponds to Proposition 4.1 of \cite{Brav-F}, which we refer to also for the precise proof).

\begin{thm} \label{th:local}
Fix indices $(s,p,r)\in\R\times[1,+\infty]\times[1,+\infty]$ such that $p>1$ and one of the two conditions in \eqref{cond:Lipschitz}
is verified. Take an initial datum $\big(\rho_0,u_0\big)$ such that conditions \eqref{eq:in-datum} and \eqref{eq:vacuum} are satisfied, for two suitable constants
$0<\rho_*\leq\rho^*$. Assume in addition that $\nabla\rho_0\in B^{s-1}_{p,r}$ and that $u_0\in L^2\cap B^s_{p,r}$.

Then, there exist a positive time $T>0$ and a unique solution $\big(\rho,u,\nabla\Pi\big)$ to equations \eqref{eq:dd-E}-\eqref{eq:in-datum}-\eqref{eq:vacuum}
on $[0,T]\times\R^d$ such that:
\begin{enumerate}[(i)]
 \item $\rho\in \mc C_b\big([0,T]\times\R^d\big)$, with $\rho_*\leq\rho(t)\leq\rho^*$ for any $t\in[0,T]$ and $\nabla\rho\in\mc C_{\weak}\big([0,T];B^{s-1}_{p,r}\big)$;
 \item $u\in \mc C^1\big([0,T];L^2\big)\,\cap\,\mc C_\weak\big([0,T];B^{s}_{p,r}\big)$;
 \item $\nabla \Pi\in \mc C\big([0,T];L^2\big)\,\cap\,\mc C_\weak\big([0,T];B^{s}_{p,r}\big)$.
\end{enumerate}
The time continuity with values in Besov spaces holds with respect to the strong topology if $r<+\infty$.
\end{thm}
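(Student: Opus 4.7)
The plan is to follow the classical scheme for quasi-linear hyperbolic systems, adapted to the density-dependent Euler equations as in \cite{D1, D:F}. First I would construct a sequence of approximate solutions $(\rho^n, u^n, \nabla\Pi^n)$ by a linear iterative procedure: starting from mollified (or frequency-truncated) initial data, define $\rho^{n+1}$ as the unique solution of the transport equation $\d_t\rho^{n+1} + u^n\cdot\nabla\rho^{n+1} = 0$, and $u^{n+1}$ as the solution of the linear problem
\[
\rho^n\,\d_t u^{n+1}\,+\,\rho^n u^n\cdot\nabla u^{n+1}\,+\,\nabla\Pi^{n+1}\,=\,0\,,\qquad \div u^{n+1}=0\,,
\]
with $\Pi^{n+1}$ recovered from the elliptic constraint $\div\bigl((1/\rho^n)\nabla\Pi^{n+1}\bigr) = -\div(u^n\cdot\nabla u^{n+1})$. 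The absence of vacuum assumption \eqref{eq:vacuum} is propagated along the flow of $u^n$, so $\rho^n$ remains bounded between $\rho_*$ and $\rho^*$ uniformly in $n$.

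The next step is to establish uniform bounds in the norms $L^\infty_T(B^{s-1}_{p,r})$ for $\nabla\rho^n$ and $L^\infty_T(L^2\cap B^s_{p,r})$ for $u^n$, on some time interval $[0,T]$ independent of $n$. For the density, this follows from the standard transport estimates in Besov spaces of Chemin-type, supplemented by Vishik's logarithmic estimate at the critical endpoint $s = 1 + d/p$, $r=1$. For the velocity, after dividing the momentum equation by $\rho^n$, one obtains a transport equation with source $-(1/\rho^n)\nabla\Pi^{n+1}$, and one can apply the same transport estimates combined with tame product and composition inequalities in Besov spaces, where the dependence on $\rho$ enters through $1/\rho^n - 1/\oline\rho$ for some reference constant density.

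The main obstacle, and the true technical core, is the pressure estimate. Taking the divergence of the momentum equation yields the elliptic problem with variable coefficients
\[
\div\bigl((1/\rho^n)\nabla\Pi^{n+1}\bigr)\,=\,-\div\bigl(u^n\cdot\nabla u^{n+1}\bigr)\,,
\]
for which one needs simultaneous $L^2$-control (granted by the finite-energy assumption $u_0\in L^2$ together with the condition $1/p+1/q\geq 1/2$ in the infinite-energy version) and $B^s_{p,r}$-control of $\nabla\Pi^{n+1}$. The latter requires a careful analysis: commutator arguments in paradifferential calculus decompose the equation into a constant-coefficient Laplacian acted on by a perturbation controlled by $\nabla\rho^n\in B^{s-1}_{p,r}$, and the Lipschitz condition \eqref{cond:Lipschitz} is exactly what makes $1/\rho^n$ a pointwise multiplier on $B^s_{p,r}$ and closes the estimate. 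The restriction $p>1$ appears here, since the elliptic theory in Besov spaces with $p=1$ fails due to the unboundedness of singular integral operators on $L^1$-based spaces.

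Once the uniform bounds are in place, I would show that $(\rho^n, u^n, \nabla\Pi^n)$ is a Cauchy sequence in a lower regularity norm (typically $L^\infty_T(L^2)$ for the velocity and the density difference) by applying transport-type estimates to the differences and using stability of the elliptic problem. Passing to the limit yields a solution with the claimed weak-in-time continuity, and the strong continuity when $r<+\infty$ is recovered by a Bona--Smith-type argument, combining the uniform high-regularity bounds with the strong convergence at lower regularity. Uniqueness follows from essentially the same stability estimate applied to the difference of two putative solutions at the regularity level of the Cauchy argument.
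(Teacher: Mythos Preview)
Your outline is a faithful sketch of the standard iterative scheme used in \cite{D1, D:F} for local well-posedness of the density-dependent Euler system in Besov spaces, and the main ingredients you identify (transport estimates under \eqref{cond:Lipschitz}, elliptic estimates for the pressure with variable coefficients, Cauchy sequence in a low norm, Bona--Smith for strong time continuity) are the right ones. Note, however, that the present paper does \emph{not} prove Theorem~\ref{th:local}: it is stated as a known result, with the precise proof deferred to Proposition~4.1 of \cite{Brav-F} (itself building on \cite{D1, D:F}); so there is no ``paper's own proof'' to compare against, and your proposal is essentially a reconstruction of what those references contain.
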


We observe that all the above mentioned well-posedness results hold true only \emph{locally in time}, %. The situation does not improve even in the
even in specific case of space dimension $d=2$. This marks a severe difference between the non-homogeneous Euler equations \eqref{eq:dd-E} and its (classical)
homogeneous counterpart, obtained by setting $\rho\equiv1$, which has been known to be well-posed globally in time since the pioneering work of Wolibner \cite{Wol}
(see also classical textbooks like \cite{Maj-Bert} and \cite{BCD} for a review of global results in different functional frameworks).
As a matter of fact, when $\rho\equiv{\rm cst}$, the global existence of solutions strongly relies on the fact that, in two space dimensions,
the vorticity of the fluid, namely the quantity
\[
 \o\,:=\,\curl u\,=\,\d_1u^2\,-\,\d_2u^1\,,
\]
is simply transported by the velocity field. In the non-homogeneous setting, instead, the vorticity satisfies (still in $2$-D) the equation
\begin{equation} \label{eq:vort}
 \d_t\o\,+\,u\cdot\nabla\o\,+\,\nabla^\perp\left(\frac{1}{\rho}\right)\cdot\nabla\Pi\,=\,0\,.
\end{equation}
The notation $\perp$ represents the rotation of a $2$-D vector field of angle $\pi/2$; in particular, $\nabla^\perp\,=\,\big(-\d_2,\d_1\big)$.
In equation \eqref{eq:vort}, we notice the presence of the pressure gradient, which is somehow a quadratic term
(as $\nabla\Pi\approx |u|^2$ at the level of \tsl{a priori} estimates),
and, more importantly, of the gradient of the density, which grows (by transport theory) exponentially with the Lipschitz norm of $u$.

\subsection{Motivations: results related to the lifespan of solutions} \label{ss:lifespan}
The previous discussion shows that the local--in--time character of the (so-far settled) well-posedness theory is, to some extent, an effect of the heterogeneity.
In this context, we should nonetheless mention the recent preprint \cite{CWZZ}, which establishes global asymptotic stability around the Couette flow.
We also quote work \cite{F-Feir_2021}, which shows the existence of non-trivial steady solutions to system \eqref{eq:dd-E} with compact support in space,
and work \cite{Brav-F}, which proves global well-posedness for small data when a damping term is added to the momentum equation.

Apart from those works, dealing with quite particular cases, the issue of
global existence of regular solutions to equations \eqref{eq:dd-E} for general initial data still remains an open problem in the literature.
Therefore, attention has been devoted to the study of alternative questions related to the lifespan $T_{\ell ife}$ of solutions, mainly following two different directions:
the first one consists in providing a lower bound for $T_{\ell ife}$, the other one in giving blow-up/continuation criteria
in terms of lower order norms of the solutions.

In the first direction, we quote paper \cite{D:F}: there, the authors improved the lower bound which can be deduced
by classical quasi-linear hyperbolic theory and found that $T_{\ell ife}$ can be bounded from below by a function (roughly speaking, a double logarithm)
going to $+\infty$ when a suitable (critical) norm of $\nabla\rho_0$ goes to $0$. This result, which is specific
to the two-dimensional case, can be interpreted as an ``asymptotically global'' well-posedness result
(here, the asymptotics is for $\rho_0\to 1$) and is consistent with what is known for the homogeneous Euler equations.

In the second direction, instead, some continuation criteria appeared in \cite{D1, D:F}: they involved the integral in time of the Lipschitz norm of the velocity field,
plus a lower order term related to the pressure gradient. The latter term was first removed in paper \cite{Korean}, by requiring suitable integrability
conditions on $u_0$ and $\nabla u_0$; this result was later slightly generalised in \cite{Brav-F}. We report here below a general form of continuation criterion for
the Euler system \eqref{eq:dd-E}; this precise statement corresponds to Lemma 4.2 of \cite{Brav-F}.

\begin{prop} \label{p:cont:pri}
Let the assumptions of Theorem \ref{th:local} be in force.
In addition, assume that $p\geq 2$.
%\fra{This assumption is needed only to control $\|\nabla\Pi\|_{B^s}\,\lesssim\,\|\nabla\Pi\|_{L^2}\,+\,\left\|\Delta\Pi\right\|_{L^p}$, no for other reasons!} \\
%%%%\[
%%%%\mbox{either }\quad p\in[2,+\infty[\,,\qquad\qquad \mbox{ or }\qquad\qquad p=+\infty \quad \mbox{ and }\quad \nabla u_0\in L^q\,,\ q\in [1,+\infty[\,.
%%%%\]
%either $1<p<+\infty$, or $p=+\infty$ and there exists some $p_0\in[1,+\infty[\,$ such that $\nabla u_0\in L^{p_0}$.
Let $\big(\rho,u,\nabla\Pi\big)$ be a local solution to system \eqref{eq:dd-E}-\eqref{eq:in-datum}-\eqref{eq:vacuum} related to the initial datum
$\big(\rho_0,u_0\big)$, defined on $[0,T^*[\,\times\R^d$ (for some time $T^*>0$) and such that the regularity properties (i)-(ii)-(iii)
stated in Theorem \ref{th:local} hold true for any $0<T<T^*$.

If $T^*<+\infty$ and
\begin{equation} \label{est:Du}
\int_0^{T^*} \left\|\nabla u(t) \right\|_{L^{\infty}}\,\dt\, <\, +\, \infty\,,
\end{equation} 
then $\big(\rho,u,\nabla\Pi\big)$ can be continued  beyond the time $T^*$ into a solution of \eqref{eq:dd-E}-\eqref{eq:in-datum}-\eqref{eq:vacuum}
with the same regularity.
\end{prop}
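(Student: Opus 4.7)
The plan is to derive, under the hypothesis \eqref{est:Du}, uniform bounds on $[0,T^*[$ for all the norms appearing in Theorem \ref{th:local}, and then apply that local existence result starting from a time $T_1<T^*$ sufficiently close to $T^*$ to extend the solution past $T^*$; uniqueness glues the two pieces together. Throughout I set $V(t):=\int_0^t\|\nabla u(\tau)\|_{L^\infty}\,d\tau$, which is finite up to $t=T^*$ by assumption. The low-order information is propagated first: by pure transport $\rho_*\le\rho(t,\cdot)\le\rho^*$ on $[0,T^*[$, and the energy identity $\frac{d}{dt}\int\rho|u|^2\,dx=0$ combined with the density bounds yields a uniform $L^2$ bound on $u(t)$.

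Next, differentiating the mass equation gives $\partial_t\nabla\rho+u\cdot\nabla(\nabla\rho)=-(\nabla u)^{\mathrm T}\nabla\rho$, and standard Besov transport estimates \`a la Vishik--Danchin then produce, via Gronwall,
\begin{equation*}
\|\nabla\rho(t)\|_{B^{s-1}_{p,r}}\,\le\,C\,\|\nabla\rho_0\|_{B^{s-1}_{p,r}}\,\exp\bigl(C\,V(t)\bigr),
\end{equation*}
up to a possible logarithmic correction in the critical endpoint $s=1+d/p$, $r=1$. Viewing the momentum equation $\partial_t u+u\cdot\nabla u=-\rho^{-1}\nabla\Pi$ as a transport equation for $u$ forced by $-\rho^{-1}\nabla\Pi$, a second Besov transport estimate gives
\begin{equation*}
\|u(t)\|_{B^s_{p,r}}\,\le\,\Bigl(\|u_0\|_{B^s_{p,r}}\,+\,\int_0^t\bigl\|\rho^{-1}\nabla\Pi(\tau)\bigr\|_{B^s_{p,r}}\,d\tau\Bigr)\,\exp\bigl(C\,V(t)\bigr).
\end{equation*}
The pressure, in turn, solves the elliptic equation
\begin{equation*}
\div\bigl(\rho^{-1}\nabla\Pi\bigr)\,=\,-\div(u\cdot\nabla u)\,=\,-\partial_i u^j\,\partial_j u^i,
\end{equation*}
obtained by dividing the momentum equation by $\rho$ and taking its divergence, together with the constraint $\div u=0$.

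Applying the divergence-form Besov elliptic theory (whose $L^2$ part uses the standing hypothesis $p\ge 2$) and controlling the tame product $\|\partial_i u^j\,\partial_j u^i\|_{B^{s-1}_{p,r}}\lesssim \|\nabla u\|_{L^\infty}\,\|u\|_{B^s_{p,r}}$ yields
\begin{equation*}
\|\nabla\Pi(t)\|_{B^s_{p,r}}\,\le\,C\bigl(\rho_*,\rho^*,\|\nabla\rho(t)\|_{B^{s-1}_{p,r}}\bigr)\,\bigl(\|\nabla u(t)\|_{L^\infty}\,\|u(t)\|_{B^s_{p,r}}\,+\,\|u(t)\|_{L^2}^2\bigr).
\end{equation*}
Plugging this back into the previous transport estimate for $u$ produces a Gronwall inequality whose coefficients involve only $V$, the density norm already propagated and the conserved $L^2$ energy; integration then bounds $\|u\|_{L^\infty([0,T^*[;B^s_{p,r})}$, and consequently also $\|\nabla\rho\|_{L^\infty([0,T^*[;B^{s-1}_{p,r})}$ and $\|\nabla\Pi\|_{L^\infty([0,T^*[;B^s_{p,r})}$, uniformly on $[0,T^*[$. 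Since the local existence time furnished by Theorem \ref{th:local} depends only on these norms, choosing $T_1<T^*$ close enough to $T^*$ and restarting from the datum $(\rho(T_1),u(T_1))$ produces a solution on $[T_1,T_1+\delta]$ with $\delta>T^*-T_1$, giving the desired extension.

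The main obstacle in the argument is the pressure estimate, especially in the critical regularity regime. The coefficient $1/\rho$ carries only $B^{s-1}_{p,r}$ regularity, that is exactly one derivative less than the target regularity of $\nabla\Pi$, so the needed elliptic Besov bound rests on a careful paradifferential decomposition and on commutator estimates between multiplication by $1/\rho$ and the non-local operators associated with the elliptic symbol. In the endpoint case $s=1+d/p$, $r=1$ one must moreover ensure that the logarithmic losses incurred in the Besov transport step and in the elliptic step are absorbed by the exponential factor $\exp(C\,V(t))$, which is where the finiteness of $V(T^*)$ is truly exploited.
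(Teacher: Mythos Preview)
Your overall strategy is the right one and matches the argument the paper imports from \cite{Brav-F} (the paper does not reprove this proposition but recalls the relevant steps in Section \ref{s:prelim}). There is, however, a genuine structural gap in the order in which you close the estimates. You assert a standalone bound $\|\nabla\rho(t)\|_{B^{s-1}_{p,r}}\le C\|\nabla\rho_0\|_{B^{s-1}_{p,r}}\,e^{CV(t)}$ and then treat this quantity as ``already propagated'' inside the pressure constant $C(\rho_*,\rho^*,\|\nabla\rho\|_{B^{s-1}_{p,r}})$. But the Besov transport estimate for $\nabla\rho$ at level $s-1$ does \emph{not} close on $V(t)=\int_0^t\|\nabla u\|_{L^\infty}$ alone: the commutator $[u\cdot\nabla,\Delta_j]\rho$ (equivalently, the tame estimate for the forcing $-(\nabla u)^{\mathrm T}\nabla\rho$) produces a term $\|\nabla u\|_{B^{s-1}_{p,r}}\,\|\nabla\rho\|_{L^\infty}$ involving the \emph{high} norm of $u$. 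This is exactly the coupled inequality recorded in Proposition \ref{p:N-bound}. Hence the density and velocity Besov norms are intertwined and must be propagated jointly through $\mc N(t):=\|\nabla\rho(t)\|_{B^{s-1}_{p,r}}+\|u(t)\|_{B^s_{p,r}}$.

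Once the estimates are coupled, your pressure inequality with a prefactor depending on $\|\nabla\rho\|_{B^{s-1}_{p,r}}$ makes the Gr\"onwall for $\mc N$ superlinear in $\mc N$, and it does not close. The remedy used in \cite{Brav-F} (and reproduced in Lemma \ref{l:press-est}) is to separate low and high density norms: one first bounds $\|\nabla\rho\|_{L^\infty}$ uniformly via $e^{CV}$, then uses the elliptic estimate in the form \eqref{est:Pi-Besov-iniz}, where the only occurrence of the high norm $\|\nabla\rho\|_{B^{s-1}_{p,r}}$ is \emph{linear} and multiplied by $\|\nabla\Pi\|_{L^\infty}$. The latter is controlled independently by interpolation and a $\Delta\Pi$ bound, yielding $\|\nabla\Pi\|_{L^\infty}\lesssim 1+\|\nabla u\|_{L^\infty}$. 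With this structure the final inequality reads $\mc N(t)\lesssim 1+\int_0^t(1+\|\nabla u\|_{L^\infty})\,\mc N\,d\tau$, which is linear in $\mc N$ and closes by Gr\"onwall under \eqref{est:Du}. Your sketch has all the right ingredients, but the closure requires this more careful bookkeeping of where the high density norm appears.
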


It is worth noticing that, in the above mentioned continuation criteria, nothing is really specific to the two-dimensional setting: those
criteria hold true in any space dimensions.
We also point out that an analogous of the celebrated Beale-Kato-Majda criterion \cite{BKM}, resting on the $L^\infty$ norm of the vorticity only, is \emph{not}
known in the non-homogeneous setting.

\medbreak
The scope of the present study is to give an alternative blow-up/continuation criterion for the density-dependent Euler equations \eqref{eq:dd-E},
which exhibit conditions only on certain derivatives of $u$ and not on its whole gradient. To some extent, this is analogue
to the Beale-Kato-Majda criterion, which, as already mentioned, consider only certain derivatives, namely the derivatives appearing in the definition of
the vorticity field of the fluid.

Here, we will formulate conditions on the derivatives aligned along a fixed direction. Thus, our approach is \emph{geometric} in nature.
At this point, it is important to mention that geometric blow-up conditions exist also for homogeneous fluids, see \cite{C-Feff} for the case of the $3$-D
Navier-Stokes equations, see \cite{C-Feff-M} for the Euler case. However, those works look more at the direction of the vorticity vector
and rest on its (H\"older) regularity. Our approach is different in spirit and, instead, is more related to Chemin's notion of tangential
regularity. % \tsl{\`a la Chemin} \cite{Ch_1991, Ch_1993, Ch_1995}.

Before going into the details of our study (see the next subsection), it is worth to point out that our results are
truly specific to the two-dimensional case.

%%%\subsection{A glimpse on the homogeneous case} \label{ss:homog}

\subsection{Main results} \label{ss:results}
The scope of this work is to replace condition \eqref{est:Du} by geometric regularity conditions on the velocity field.
Before stating our main results, some preliminary definitions are in order.

First of all, we introduce the type of \emph{geometric regularity} condition which plays a central role in this paper.
Assume that a (say) smooth vector field $X$ on $\R^d$ is given. Let $f:\R^d\longrightarrow \R$ be a Lipschitz function. We define the derivative of $f$ along $X$
as the function
\begin{equation*}
\d_Xf\,:=\,X\cdot\nabla f\,=\,\sum_{j=1}^dX^j\,\d_jf\,.
\end{equation*}
Notice that this quantity is well-defined pointwise almost everywhere on $\R^d$.
The definition naturally extends to time-dependent functions $f$ and vector fields $X$, as well as to the case in which the scalar field $f$ is replaced
by a vector field $F:\R^d\longrightarrow\R^d$.

Our results pertain to  the two-dimensional case only: they rely on a specific structure of the equations which arises in this situation. %two space dimensions.
The case of higher space dimensions is out of the scopes of this work and will not be treated here. Therefore, hereafter we set $d=2$
and we define the space domain
\[
% d\,=\,2\,.
\Omega\,=\,\R^2\, %\; \mbox{ or }\; \T^2\,.
\]
%In fact, for simplicity of presentation, we are going to develop all our arguments in the whole space case; the adaptation to the case of the torus
%is rather direct, hence omitted here.
However, the adaptation of our results to the case of the two-dimensional torus $\T^2$ is also possible.

Next, for the sake of better readability of the statements, let us collect the basic assumptions which are common to all our results.

\paragraph{Basic assumptions.}
The first class of assumptions that we impose are needed simply in order to place our study in the framework of the well-posedness of equations \eqref{eq:dd-E};
more precisely, we want both Theorem \ref{th:local} and Proposition \ref{p:cont:pri} to hold.

As we will work in the general class of Besov spaces $B^s_{p,r}$ (this will be better justified in Subsection \ref{ss:ideas-proof}),
we need the embedding property $B^s_{p,r}\hookrightarrow W^{1,\infty}$, which is
guaranteed by either one of the two conditions in \eqref{cond:Lipschitz}. We will use the terminology \emph{subcritical case} and \emph{critical case} depending on
whether the Besov indices satisfy, respectively, the inequality condition
or the equality condition in \eqref{cond:Lipschitz}.
To sum up, we will impose the following assumption.
\begin{itemize}
 \item[\bf (A1)] The triplet $(s,p,r)\in \R\times[2,+\infty]\times[1,+\infty]$ is such that
\begin{align*}
&\mbox{either }\qquad\qquad s\,>\,1\,+\,\frac{2}{p}\qquad \mbox{ (subcritical case) }
% \qquad \mbox{ or }\qquad\qquad s\,=\,1\,+\,\frac{d}{p}\quad \mbox{ and }\quad r\,=\,1\,.
\\
&\qquad \mbox{ or }\qquad\qquad s\,=\,1\,+\,\frac{2}{p}\qquad \mbox{ and }\qquad r\,=\,1 \qquad \mbox{ (critical case)}\,.
\end{align*}
\end{itemize}

Next, we take an initial datum in the above Besov regularity class and, in addition, such that properties \eqref{eq:in-datum} and \eqref{eq:vacuum} are fulfilled.
For the reasons explained in Subsection \ref{ss:previous}, we also need to impose a finite energy condition on the initial velocity field.
All this is stated in our second and third hypotheses.
\begin{itemize}
 \item[\bf (A2)] The initial datum $\big(\rho_0,u_0\big)$ verifies the properties
\[
\div u_0\,=\,0\qquad\qquad \mbox{ and }\qquad\qquad 0\,<\,\rho_*\,\leq\,\rho_0\,\leq\,\rho^*\,,
\]
for two suitable positive constants $\rho_*\leq\rho^*$.
 
\item[\bf (A3)] The initial datum $\big(\rho_0,u_0\big)$ is such that
\[
 u_0\,\in\,L^2(\R^2)\cap B^s_{p,r}(\R^2)\qquad\qquad \mbox{ and }\qquad\qquad \nabla\rho_0\,\in\,B^{s-1}_{p,r}(\R^2)\,.
\]
\end{itemize}

Finally, we need an extra assumption, of purely technical nature. It is necessary, within our approach, in order to prove our blow-up/continuation results.
\begin{itemize}
 \item[\bf (A4)] There exists a $p_0\in\,]2,+\infty[\,$ such that $\nabla u_0\,\in\,L^{p_0}(\R^2)$.
\end{itemize}
Observe that this last hypothesis is only needed when $p=+\infty$, whereas it comes for free in the case $p<+\infty$. Indeed,
when $2\leq p<+\infty$, by Besov embeddings one has that $\nabla u_0\in L^p(\R^2)\cap L^\infty(\R^d)$, whence the existence of the sought $p_0>2$
immediately follows.

\paragraph{Statement of the main results.}
After those preliminaries, we are ready to state the main result of this paper, namely the following blow-up criterion, which is geometric in nature.

\begin{thm} \label{th:blow-up}
Fix indices $(s,p,r)\in\R^3$ verifying assumption {\rm \tbf{(A1)}}.
Take an initial datum $\big(\rho_0,u_0\big)$ satisfying assumptions {\rm \tbf{(A2)}}-{\rm \tbf{(A3)}}-{\rm \tbf{(A4)}} and
let $\big(\rho,u,\nabla\Pi\big)$ be the related local solution to system \eqref{eq:dd-E}, as guaranteed by Theorem \ref{th:local}.
Define the time $T^*>0$ to be the lifespan of this solution.
Finally, define the vector field $X\,:=\,\nabla^\perp\rho$.

If $T^*\,<\,+\infty$, then:
\begin{itemize}
 \item in the subcritical case in which $s$ satisfies the strict inequality in \eqref{cond:Lipschitz}, one has
\[
\int^{T^*}_0\left\|\d_{X(t)}u(t)\right\|_{L^\infty}\,\dt\,=\,+\,\infty\,;
\]
\item in the critical case in which $s$ satisfies the equality in \eqref{cond:Lipschitz} (and then $r=1$), one has the property
\[
 \limsup_{t\to T^*}\left\|\d_{X(t)}u(t)\right\|_{B^{0}_{\infty,1}}\,=\,+\infty\,.
\]
\end{itemize}
\end{thm}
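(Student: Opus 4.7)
The natural starting point is to argue by contraposition together with the classical continuation criterion of Proposition \ref{p:cont:pri}: assuming the geometric quantity in the statement is finite, my goal becomes to deduce that $\int_0^{T^*}\|\nabla u(t)\|_{L^\infty}\,\dt<+\infty$. Since $\div u=0$ in two dimensions, the Biot--Savart law together with a logarithmic interpolation inequality will reduce this in turn to controlling $\|\omega\|_{L^\infty}$, plus a modest amount of extra Besov regularity kept alive up to $T^*$ to feed the $\log$ factor.

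The algebraic heart of the argument is the construction of a suitable \emph{good unknown}. Since $X=\nabla^\perp\rho$ is transported by the flow of $u$ (indeed $\partial_X\rho=\nabla^\perp\rho\cdot\nabla\rho\equiv 0$ and $\div u=0$ force $(\d_t+u\cdot\nabla)X=\d_X u$, so the material derivative $D_t:=\d_t+u\cdot\nabla$ commutes with $\d_X$), one can combine the vorticity equation \eqref{eq:vort} with the identity $\nabla\Pi=-\rho\,D_tu$ coming from the momentum equation: using $D_t\rho=0$ and the computation $X\cdot D_tu=D_t(X\cdot u)-\d_X u\cdot u$, the vorticity equation reads
\[
D_t\left(\omega+\frac{X\cdot u}{\rho}\right)\,=\,\frac{u\cdot\d_Xu}{\rho}\,.
\]
Denoting by $\eta$ the quantity in parentheses, this is a pure transport equation whose source is pointwise bounded by $\rho_*^{-1}\|u\|_{L^\infty}\|\d_Xu\|_{L^\infty}$. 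This is precisely the structural ingredient that converts the geometric assumption into a genuine control of $\omega$.

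I would then close standard a priori bounds by a Gronwall argument. The transport equation $D_tX=\d_Xu$ gives $\|X(t)\|_{L^\infty}\le\|X_0\|_{L^\infty}+\int_0^t\|\d_X u\|_{L^\infty}\,\dt$, which is finite by assumption. The equation for $\eta$ propagates the $L^{p_0}$ and $L^\infty$ norms of $\eta$; combined with assumption {\rm\tbf{(A4)}} and the two-dimensional Sobolev-type bound $\|u\|_{L^\infty}\lesssim\|u\|_{L^2}+\|\omega\|_{L^{p_0}}$, together with $\omega=\eta-X\cdot u/\rho$ and the $L^2$ energy bound on $u$, this controls $\|\omega\|_{L^\infty}$ on $[0,T^*]$.

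To upgrade this into the desired control on $\|\nabla u\|_{L^\infty}$, one must keep a little extra Besov regularity alive up to $T^*$. This is done by propagating tangential regularity \textsl{à la} Chemin for the good unknown $\eta$, together with the pressure estimates already developed in \cite{D1,D:F,Brav-F}, invoking suitable paradifferential commutator estimates. I expect the main technical obstacle to lie here, and to be especially acute in the critical case $s=1+2/p$, $r=1$: there the logarithmic Biot--Savart interpolation breaks down and one has to work directly with the endpoint space $B^0_{\infty,1}$, which embeds into $L^\infty$ just barely enough to allow a linear (logarithm-free) closure — this is what dictates the shape of the statement in the critical regime.
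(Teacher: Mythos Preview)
Your good unknown is exactly the paper's, up to a harmless factor of $\rho$: the paper sets $\eta:=\curl(\rho u)=\rho\omega+X\cdot u$, and since $D_t\rho=0$ this satisfies $D_t\eta=\d_Xu\cdot u$, equivalent to your equation. The preliminary chain of $L^{p_0}$ and $L^\infty$ bounds on $X$, $\eta$, $u$ and hence $\omega$ proceeds as you sketch and matches Section~\ref{s:prelim}.

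Where you diverge is in the closure. In the \emph{subcritical} case the paper does \emph{not} propagate tangential regularity for $\eta$; it propagates the full quantity $\mc N(t):=\|\nabla\rho\|_{B^{s-1}_{p,r}}+\|u\|_{B^s_{p,r}}$ directly from the original system (transport for $\nabla\rho$, momentum equation for $u$, with the pressure bounds of Lemma~\ref{l:press-est}). The point is that your $L^\infty$ control on $\omega$ feeds the logarithmic interpolation $\|\nabla u\|_{L^\infty}\lesssim\log(e+\mc N)$ (Lemma~\ref{l:Du-log-interp}); together with $\mc N'\lesssim(1+\|\nabla u\|_{L^\infty})\mc N$ this yields $\mc N'\lesssim\mc N\log(e+\mc N)$ and Osgood closes. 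Invoking Chemin-type striated regularity here is both unnecessary and, as Remark~\ref{r:striated} indicates, would actually produce a weaker criterion (needing $\mc C^\veps$ control).

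In the \emph{critical} case your description is too schematic to be a proof, and misses the two specific mechanisms that make the argument go through. First, the paper does not use Biot--Savart for $\omega$ but the Helmholtz decomposition \eqref{eq:Helm-mom} of the momentum $m=\rho u$, so that the singular integrals act on $\eta$ and on $u\cdot\nabla\rho$; these are then controlled in $B^0_{\infty,1}$, and the Vishik--Hmidi--Keraani improved transport estimate (Theorem~\ref{th:B^0}) gives only \emph{linear} growth of $\|X\|_{B^0_{\infty,1}}$ and $\|\eta\|_{B^0_{\infty,1}}$ in $U(t):=1+\int_0^t\|\nabla u\|_{L^\infty}$. Second, and this is the real obstruction, one must bound $\|\d_Xu\cdot u\|_{B^0_{\infty,1}}$ while $B^0_{\infty,1}$ is not an algebra and no derivative may be lost; the paper resolves this via a fine interpolation in \emph{logarithmic} Besov spaces (Lemma~\ref{l:interp-log} and Proposition~\ref{p:log-interp}), which is precisely why the criterion in the critical regime requires a pointwise-in-time (not $L^1_t$) control on $\|\d_Xu\|_{B^0_{\infty,1}}$. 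Your proposal does not identify either ingredient.
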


Some remarks about the previous result are in order. More comments will be formulated later on, after stating %the counterparts of the previous theorem
%related to the continuation of solutions beyond some time $T>0$.
suitable counterparts of Theorem \ref{th:blow-up} related to the continuation of solutions.

\begin{rmk} \label{r:homog}
Observe that, as a byproduct of Theorem \ref{th:blow-up}, one recovers the global well-posedness of the classical (homogeneous) incompressible
Euler equations in $2$-D.
Indeed, in the case $\rho_*=\rho^*=\oline\rho$, the initial density $\rho_0\equiv\oline\rho$ is constant, which implies that
$\rho\equiv\oline\rho$ also at any later time.
In particular, one has $X\,=\,\nabla^\perp\rho\equiv0$ and the two conditions of the previous theorem are never satisfied. Hence, $T^*=+\infty$.

More in general, Theorem \ref{th:blow-up} gives rigorous confirmation to a rather intuitive fact: in $2$-D, singularities in equations \eqref{eq:dd-E}
cannot occur in regions of constant density.
\end{rmk}

\begin{rmk} \label{r:n-deriv}
The matrix $\nabla u$ contains only four elements. From Proposition \ref{p:cont:pri}, thanks to the divergence-free condition over $u$, we see that
we need a control on three of them to establish blow-up or continuation of solutions. Theorem \ref{th:blow-up} establishes that it is enough
to have information on only two derivatives with respect to a twisted (and possibly degenerate) direction.

Whether the above conditions can be replaced by a control only on a scalar quantity (like in the $2$-D version of the Beale-Kato-Maja criterion \cite{BKM})
remains open at present.
\end{rmk}

\begin{rmk} \label{r:delta}
In the critical case, Theorem \ref{th:blow-up} states (in case of finite lifespan) the blow-up of the $B^0_{\infty,1}$ norm of the quantity $\d_Xu$, pointwise in time.
Notice that, in this (critical) setting,
the well-posedness result of Theorem \ref{th:local} implies that $\nabla\rho$ and $\nabla u$ only belong to $B^0_{\infty,1}$;
from this, and using the fact that $\d_Xu\,=\,\div\big(X\otimes u\big)$, one can prove that the regularity of $\d_Xu$ is $B^0_{\infty,1}$ as well.

Hence, our blow-up criterion is really settled at critical regularity. In addition, and importantly,
it does not lose derivatives and, in this sense, it is an effective criterion (it would not be so, should it require a control on
higher regularity norms).
Its proof relies on very fine estimates, precisely because we are not allowed to lose any kind of regularity in the estimates. This is made possible by
the use of logarithmic Besov spaces.
\end{rmk}

The proof of Theorem \ref{th:blow-up} will be based on a contraposition argument: in fact, we will rather prove suitable continuation criteria
for regular solutions. 
For the sake of clarity, we present here the precise statements.
We start by dealing with the subcritical case.

\begin{thm} \label{th:cont-subcrit}
Fix indices $(s,p,r)\in\R^3$ verifying assumption {\rm \tbf{(A1)}} with strict inequality (subcritical case).
Take an initial datum $\big(\rho_0,u_0\big)$ satisfying assumptions {\rm \tbf{(A2)}}-{\rm \tbf{(A3)}}-{\rm \tbf{(A4)}} and
let $\big(\rho,u,\nabla\Pi\big)$ be the related local solution to system \eqref{eq:dd-E}, as guaranteed by Theorem \ref{th:local}.
Define the time $T^*>0$ to be the lifespan of this solution.
Finally, define the vector field $X\,:=\,\nabla^\perp\rho$.

Let $T>0$ be a given time and assume that
\begin{equation} \label{eq:cont_subcrit}
% \sup_{t\in[0,T[\,}
\int^T_0\left\|\d_{X(t)}u(t)\right\|_{L^\infty}\,\dt\,<\,+\infty\,.
\end{equation}
Then, the solution  $\big(\rho,u,\nabla\Pi\big)$ can be continued beyond the time $T$ into a solution to system \eqref{eq:dd-E}-\eqref{eq:in-datum}-\eqref{eq:vacuum}
having the same regularity.
\end{thm}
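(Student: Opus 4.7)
By Proposition \ref{p:cont:pri}, it is enough to prove that the assumption \eqref{eq:cont_subcrit} forces $\int_0^T\|\nabla u(t)\|_{L^\infty}\,dt<+\infty$: the continuation beyond $T$ then follows. I would adopt a bootstrap argument on the maximal existence interval $[0,T^*[$: assuming $T^*\leq T$, I will show that all relevant norms stay uniformly bounded, which contradicts the finite-lifespan alternative of Proposition \ref{p:cont:pri} and hence forces $T^*>T$.

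\textbf{Step 1: Transport of the vector field $X$.} From $\partial_t\rho+u\cdot\nabla\rho=0$ and $\div u=0$, one readily checks that $X=\nabla^\perp\rho$ satisfies
\[
\partial_t X + u\cdot\nabla X\,=\,\partial_X u.
\]
Propagation in $L^\infty$ along the Lagrangian flow of $u$ and hypothesis \eqref{eq:cont_subcrit} yield
$\|X\|_{L^\infty_T L^\infty}\leq \|X_0\|_{L^\infty} + \int_0^T\|\partial_X u(\tau)\|_{L^\infty}\,d\tau<+\infty$, hence a uniform control of $\nabla\rho$ in $L^\infty$ up to time $T$.

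\textbf{Step 2: A pressure-free evolution equation for the vorticity.} The core new ingredient, and the step I expect to be the most delicate, is the following. Rewriting the vorticity equation \eqref{eq:vort} as $D_t\omega=\rho^{-2}\partial_X\Pi$, with $D_t=\partial_t+u\cdot\nabla$ (since $\nabla^\perp(1/\rho)=-\rho^{-2}X$), and taking the scalar product of the momentum equation with $X$ while using the transport equation of Step 1 to turn $X\cdot\partial_t u$ and $X\cdot(u\cdot\nabla u)$ into the material derivative of $X\cdot u$, one obtains
\[
D_t(X\cdot u) + \rho^{-1}\,\partial_X\Pi\,=\,(\partial_X u)\cdot u.
\]
Eliminating $\partial_X\Pi$ between the two equations and exploiting $D_t(\rho^{-1})=0$ produces the pressure-free identity
\[
D_t\bigl[\omega+\rho^{-1}(X\cdot u)\bigr]\,=\,\rho^{-1}(\partial_X u)\cdot u,
\]
from which an $L^\infty$ bound along characteristics controls $\|\omega\|_{L^\infty_T L^\infty}$ in terms of $\|\partial_X u\|_{L^1_T L^\infty}$, $\|u\|_{L^\infty_T L^\infty}$, $\|X\|_{L^\infty_T L^\infty}$ and the density bounds $\rho_*,\rho^*$. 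The clean structure of this identity (in particular, the complete removal of $\nabla\Pi$) is what makes the geometric condition \eqref{eq:cont_subcrit} sufficient.

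\textbf{Step 3: Closing the bootstrap.} The embedding $B^s_{p,r}\hookrightarrow L^\infty$ guaranteed by \textbf{(A1)} provides $\|u\|_{L^\infty}\lesssim\|u\|_{B^s_{p,r}}+\|u\|_{L^2}$, and $\|u\|_{L^2}$ is controlled through the conservation of $\|\sqrt{\rho}\,u\|_{L^2}$, so it remains to propagate $\|u\|_{B^s_{p,r}}$. The classical logarithmic interpolation
\[
\|\nabla u\|_{L^\infty}\,\lesssim\,\|\omega\|_{L^\infty}\,\log\!\bigl(e+\|u\|_{B^s_{p,r}}\bigr)+\|u\|_{L^2},
\]
combined with the standard tame Besov estimates for the coupled system \eqref{eq:dd-E} (in which $\|\nabla u\|_{L^\infty}$ is the only truly nonlinear source term) and with Osgood's lemma, closes the bootstrap and delivers $\|\nabla u\|_{L^1_T L^\infty}<+\infty$. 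Hypothesis \textbf{(A4)} is used when $p=+\infty$, where it furnishes the auxiliary integrability $\nabla u\in L^{p_0}$ needed in the $L^p$-based elliptic estimates for $\nabla\Pi$ that enter the Besov propagation of $\nabla\rho$. Proposition \ref{p:cont:pri} then yields the continuation of the solution beyond time $T$, completing the proof.
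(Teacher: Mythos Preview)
Your Step 2 identity is correct and, in fact, is exactly the paper's key equation in disguise: writing $\eta:=\curl(\rho u)=\rho\omega+X\cdot u$, your transported quantity $\omega+\rho^{-1}(X\cdot u)$ is nothing but $\rho^{-1}\eta$, and since $D_t\rho^{-1}=0$ your identity is equivalent to the paper's $D_t\eta=(\d_Xu)\cdot u$. So the ``new ingredient'' you identify is the right one.

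The genuine gap is in Step 3, in how you control $\|u\|_{L^\infty}$. You propose $\|u\|_{L^\infty}\lesssim\|u\|_{B^s_{p,r}}+\|u\|_{L^2}$ via the embedding from \textbf{(A1)}. But this makes the argument circular: from Step 2 you only get $\|\omega(t)\|_{L^\infty}\lesssim 1+\|u(t)\|_{L^\infty}+\int_0^t\|\d_Xu\|_{L^\infty}\|u\|_{L^\infty}$, so with your bound $\|\omega\|_{L^\infty}\lesssim\mathcal N:=\|u\|_{B^s_{p,r}}$, the logarithmic inequality gives $\|\nabla u\|_{L^\infty}\lesssim \mathcal N\log(e+\mathcal N)$, and the tame Besov estimate then yields
\[
\mathcal N(t)\,\lesssim\,1+\int_0^t \mathcal N(\tau)^2\log\big(e+\mathcal N(\tau)\big)\,d\tau,
\]
which is superlinear and \emph{not} covered by Osgood. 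The bootstrap does not close.

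What is missing is a bound on $\|u\|_{L^\infty}$ that is \emph{independent of the high-order norm} $\mathcal N$. The paper obtains this (Subsection \ref{ss:u-Leb}) by propagating the $L^{p_0}$ norm of $\eta$ (equivalently, of your $\omega+\rho^{-1}X\cdot u$) and using the Sobolev embedding $W^{1,p_0}\hookrightarrow L^\infty$ for $p_0>2$: from $\|u\|_{L^\infty}\lesssim 1+\|\eta\|_{L^{p_0}}$ and $\|\eta(t)\|_{L^{p_0}}\lesssim\|\eta_0\|_{L^{p_0}}+\int_0^t\|\d_Xu\|_{L^\infty}\|u\|_{L^{p_0}}$, a Gr\"onwall argument (interpolating $\|u\|_{L^{p_0}}$ between $L^2$ and $L^\infty$) closes on $\|u\|_{L^\infty}$ \emph{first}, using only \eqref{eq:cont_subcrit}. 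Only then is $\|\omega\|_{L^\infty}$ bounded by a constant, and the logarithmic inequality plus Osgood go through. This is the actual role of \textbf{(A4)}: it is not merely an auxiliary ingredient for the pressure estimates, but the mechanism that decouples the $L^\infty$ control of $u$ from the Besov norm.
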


\begin{comment}
Apart from some new ideas which are common to both the subcritical and the critical settings (see the discussion in Subsection \ref{ss:ideas-proof} below),
what makes the subcritical case special is the use of a well-known logarithmic interpolation inequality,
which enables us to apply the Osgood lemma to get the result.
%This inequality allows us to estimate the Lipschitz norm of the velocity field by a lower order term, times the \emph{logarithm} of
%the higher order norms. Then, an application of the Osgood lemma yields the sought result.
In the critical case, in order to bypass the lack of an analogous interpolation inequality, the strategy will consist in using Proposition \ref{p:cont:pri}
and estimating directly the $L^\infty$ norm of $\nabla u$. This approach, however, will make some
singular integrals appear in the computations, whose treatement in a low (as low as possible) regularity framework requires the introduction of
Besov spaces.

Motivated by the previous discussion, let us state our first geometric continuation criterion in the critical setting.
%which is contained in the next statement.
As a matter of fact, the previously described approach works whenever the Besov regularity indices verify one of the two properties in \eqref{cond:Lipschitz},
but it gives rise to stronger conditions than the one appearing in \eqref{eq:cont_subcrit}, whence our interest %in this result
in the critical case only.
\end{comment}

In the critical regularity setting, instead, we need to require a bound on stronger norms of $\d_Xu$, than its $L^\infty$ norm. This will be explained better
in Subsection \ref{ss:ideas-proof}. Our first continuation result in the critical framework reads as follows.

\begin{thm} \label{th:cont-crit_sum}
Fix indices $(s,p,r)\in\R^3$ verifying assumption {\rm \tbf{(A1)}} with equality (critical case).
Take an initial datum $\big(\rho_0,u_0\big)$ satisfying assumptions {\rm \tbf{(A2)}}-{\rm \tbf{(A3)}}-{\rm \tbf{(A4)}} and
let $\big(\rho,u,\nabla\Pi\big)$ be the related local solution to system \eqref{eq:dd-E}, as guaranteed by Theorem \ref{th:local}.
Define the time $T^*>0$ to be the lifespan of this solution.
Finally, define the vector field $X\,:=\,\nabla^\perp\rho$.

Let $T>0$ be a given time and assume that
\begin{equation} \label{eq:cont-cond_sum}
% \sup_{t\in[0,T[\,}
\int^T_0\left(\left\|\d_{X(t)}u(t)\right\|_{B^0_{\infty,1}}\,+\,\left\|\d_{X(t)}\big|u(t)\big|^2\right\|_{B^0_{\infty,1}}\right)\,\dt\,<\,+\infty\,.
\end{equation}
Then, the solution  $\big(\rho,u,\nabla\Pi\big)$ can be continued beyond the time $T$ into a solution to system \eqref{eq:dd-E}-\eqref{eq:in-datum}-\eqref{eq:vacuum}
having the same regularity.
\end{thm}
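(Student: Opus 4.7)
The plan is to invoke Proposition \ref{p:cont:pri}, reducing the continuation claim to showing that $\int_0^T \|\nabla u(t)\|_{L^\infty}\,\dt < +\infty$ under hypothesis \eqref{eq:cont-cond_sum}.

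My first step will be to derive a clean transport equation for a modified vorticity, isolating exactly the quantities controlled by the hypothesis. Setting $\omega = \curl u$ and $X = \nabla^\perp \rho$, I will use: (i) the identity $D_t X = \partial_X u$ (with $D_t := \partial_t + u\cdot\nabla$), valid in 2D for a divergence-free velocity field transporting the scalar $\rho$; (ii) the vorticity equation $D_t\omega = \rho^{-2}\,\partial_X \Pi$, obtained by taking the curl of $D_t u = -\rho^{-1}\nabla\Pi$; and (iii) the algebraic identity $(\partial_X u)\cdot u = \tfrac12 \partial_X |u|^2$. Combining these with $D_t(X\cdot u) = (D_t X)\cdot u + X \cdot D_t u$ and $\nabla\Pi = -\rho\,D_t u$, I get
\[
\partial_X \Pi \;=\; -\rho\, D_t(X \cdot u) \,+\, \tfrac{\rho}{2}\,\partial_X |u|^2,
\]
and since $D_t(1/\rho)=0$, the quantity $\Omega := \omega + (X\cdot u)/\rho$ satisfies the clean transport equation
\[
D_t \Omega \;=\; \frac{1}{2\rho}\,\partial_X |u|^2.
\]
By hypothesis \eqref{eq:cont-cond_sum}, the right-hand side lies in $L^1_T(B^0_{\infty,1})$.

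Next, I will collect auxiliary a priori bounds. The density bounds $\rho_*\leq\rho\leq\rho^*$ are preserved by transport; energy conservation (combined with $\rho_*>0$) gives $u \in L^\infty_T(L^2)$; integrating $D_t X = \partial_X u$ in $L^\infty$, and using $B^0_{\infty,1} \hookrightarrow L^\infty$, gives $X \in L^\infty_T(L^\infty)$. Using assumption {\rm \textbf{(A4)}} together with Calder\'on--Zygmund estimates applied to the Biot--Savart law, I will propagate $\nabla u \in L^{p_0}$ for some $p_0>2$ via the vorticity equation closed with the elliptic estimate $\|\nabla\Pi\|_{L^{p_0}} \lesssim \|u\|_{L^\infty}\|\nabla u\|_{L^{p_0}}$, and hence recover $\|u\|_{L^\infty_T(L^\infty)}$ by Sobolev embedding. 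I will then apply the standard Chemin--Danchin transport estimate in the critical space $B^0_{\infty,1}$ to the equation for $\Omega$, obtaining
\[
\|\Omega(t)\|_{B^0_{\infty,1}} \,\leq\, C\,e^{C V(t)}\left(\|\Omega_0\|_{B^0_{\infty,1}} + \int_0^t \big\|\partial_X |u|^2(s)\big\|_{B^0_{\infty,1}}\,\dd s\right), \qquad V(t) := \int_0^t \|\nabla u(s)\|_{L^\infty}\,\dd s.
\]
Since $\nabla\rho \in B^{2/p}_{p,1} \hookrightarrow B^0_{\infty,1}$, Besov product estimates give $(X\cdot u)/\rho \in L^\infty_T(B^0_{\infty,1})$, so that $\omega(t) = \Omega(t) - (X\cdot u)/\rho$ inherits the same bound. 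The 2D Biot--Savart law then yields $\|\nabla u(t)\|_{L^\infty} \lesssim \|\omega(t)\|_{B^0_{\infty,1}} + \|u(t)\|_{L^2}$, producing a self-referring inequality for $V(t)$.

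The hardest part will be closing this bootstrap at critical regularity without any derivative loss. The resulting inequality has the form $V'(t) \leq C\,g(t)\,e^{C V(t)}$ with $g \in L^1([0,T])$, which does not admit a standard Gronwall closure: it requires smallness of $\int_0^T g$ relative to the data. Overcoming this is precisely where the use of \emph{logarithmic Besov spaces} (cf. Remark \ref{r:delta}) must enter: by working in a refined space in which the dyadic norm is reweighted by a logarithmic factor in frequency, one can replace the exponential $e^{CV(t)}$ by a polynomial in $V(t)$, and then close the argument via an Osgood-type lemma, obtaining a uniform bound on $V(T)$ in terms of the data and the finite quantity in \eqref{eq:cont-cond_sum}. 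Feeding this bound into Proposition \ref{p:cont:pri} completes the continuation.
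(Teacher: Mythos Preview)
Your overall strategy --- reduce to Proposition \ref{p:cont:pri}, introduce a modified vorticity with a clean transport equation, and bound $\|\nabla u\|_{L^\infty}$ via a $B^0_{\infty,1}$ estimate on that vorticity --- is on the right track, and your quantity $\Omega$ is exactly $\eta/\rho$ where $\eta = \curl(\rho u)$ is the unknown the paper uses. But the argument has a genuine gap at the closure step, and your proposed fix is the wrong one.

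The problem is your use of the ``standard Chemin--Danchin'' transport estimate in $B^0_{\infty,1}$, which gives the exponential factor $e^{CV(t)}$. As you correctly observe, this produces $V'(t)\leq C g(t)\,e^{CV(t)}$, which cannot be closed without smallness. Your suggestion to repair this via logarithmic Besov spaces is misplaced: in the paper that machinery is used only for Theorem \ref{th:cont-crit}, to handle the non-algebra product $\d_Xu\cdot u$ when one assumes control of $\d_Xu$ alone. For the present theorem the key tool you are missing is the \emph{improved} transport estimate of Vishik and Hmidi--Keraani (Theorem \ref{th:B^0}), which in the zero-regularity space $B^0_{\infty,1}$ gives a growth that is only \emph{linear} in $V(t)$:
\[
\|\eta(t)\|_{B^0_{\infty,1}}\,\lesssim\,\big(1+V(t)\big)\left(\|\eta_0\|_{B^0_{\infty,1}}+\int_0^t\big\|\d_X|u|^2\big\|_{B^0_{\infty,1}}\,\dd\t\right),
\]
and similarly for $X=\nabla^\perp\rho$ from its transport equation $D_tX=\d_Xu$. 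Combined with $\|\nabla u\|_{L^\infty}\lesssim 1+\|\nabla\rho\|_{B^0_{\infty,1}}+\|\eta\|_{B^0_{\infty,1}}$ (which the paper obtains from the Helmholtz decomposition of $m=\rho u$), hypothesis \eqref{eq:cont-cond_sum} yields $V'(t)\leq C_1+C_2\,V(t)$ and a plain Gr\"onwall argument closes.

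Two smaller points. First, working with $\Omega=\eta/\rho$ rather than $\eta$ puts a factor $1/\rho$ on the forcing term; since $B^0_{\infty,1}$ is not an algebra this is not free, and the paper avoids it by using $\eta$ directly, whose right-hand side is exactly $\tfrac12\d_X|u|^2$. Second, your claim that $(X\cdot u)/\rho\in L^\infty_T(B^0_{\infty,1})$ because $\nabla\rho_0\in B^{2/p}_{p,1}$ is circular: you need the $B^0_{\infty,1}$ bound on $\nabla\rho$ \emph{uniformly on $[0,T[$}, which is precisely what the improved transport estimate on $X$ provides --- so this too hinges on Theorem \ref{th:B^0}.
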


The two terms under the integral in \eqref{eq:cont-cond_sum} are
strictly related, as one has $\d_X|u|^2\,=\,2\,\d_Xu\cdot u$. Therefore, it is natural to try to control the second term of the sum by the first one
and formulate a continuation criterion in terms of the first term only.
%Here the problem lies in the fact that we work
%in a critical setting: more precisely, since the Besov space $B^0_{\infty,1}$ is \emph{not} an algebra, its control will cause somehow a loss of regularity.
This is non-trivial in the critical setting, because the Besov space $B^0_{\infty,1}$ is \emph{not} an algebra.
We manage to do that avoiding any loss of derivatives in the estimates; however, for technical reasons,
the integral in time must be replaced by a pointwise condition.

\begin{thm} \label{th:cont-crit}
Fix indices $(s,p,r)\in\R^3$ verifying assumption {\rm \tbf{(A1)}} with equality (critical case).
Take an initial datum $\big(\rho_0,u_0\big)$ satisfying assumptions {\rm \tbf{(A2)}}-{\rm \tbf{(A3)}}-{\rm \tbf{(A4)}} and
let $\big(\rho,u,\nabla\Pi\big)$ be the related local solution to system \eqref{eq:dd-E}, as guaranteed by Theorem \ref{th:local}.
Define the time $T^*>0$ to be the lifespan of this solution.
Finally, define the vector field $X\,:=\,\nabla^\perp\rho$.

Let $T>0$ be a given time and assume that
\begin{equation} \label{eq:cont-cond}
 \sup_{t\in[0,T[\,}\left\|\d_{X(t)}u(t)\right\|_{B^{0}_{\infty,1}}\,<\,+\infty\,.
\end{equation}
%where we have defined, for a suitably smooth $f$, the logarithmic Besov norm
%\[
% \left\|f\right\|_{B^{\de\log}_{\infty,1}}\,:=\,\sum_{\nu\geq-1}\big(\nu+2\big)^\de\,\left\|\Delta_\nu f\right\|_{L^\infty}\,,
%\]
%with $\big(\Delta_\nu\big)_{\nu\geq-1}$ being the family of frequency-localisation operators associated to a Littlewood-Paley decomposition over $\R^2$.
%
Then, the solution  $\big(\rho,u,\nabla\Pi\big)$ can be continued beyond the time $T$ into a solution to system \eqref{eq:dd-E}-\eqref{eq:in-datum}-\eqref{eq:vacuum}
having the same regularity.
\end{thm}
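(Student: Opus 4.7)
The plan is to \textbf{reduce Theorem~\ref{th:cont-crit} to Theorem~\ref{th:cont-crit_sum}}: arguing by contradiction, suppose that the lifespan $T^*$ satisfies $T^*\leq T$, and use the pointwise hypothesis \eqref{eq:cont-cond} to establish the two $L^1_T$ conditions demanded by the previous continuation criterion. The first integral in \eqref{eq:cont-cond_sum} is handled trivially, since
\[
\int_0^{T^*} \left\|\d_{X(t)} u(t)\right\|_{B^0_{\infty,1}} \,\dd t \,\leq\, T^* \cdot \sup_{t\in[0,T[}\,\left\|\d_{X(t)} u(t)\right\|_{B^0_{\infty,1}} \,<\, +\infty.
\]
The entire work goes into proving the second bound, $\int_0^{T^*} \|\d_X|u|^2\|_{B^0_{\infty,1}}\,\dd t < +\infty$.

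The central step is to estimate the product $\d_X u \cdot u$ in $B^0_{\infty,1}$, which in the generality required here (multiplication by a factor $u$ not known \emph{a priori} to lie in $B^0_{\infty,1}$) is not granted for free. I would apply Bony's paraproduct decomposition,
\[
\d_X u \cdot u \,=\, T_u\, \d_X u \,+\, T_{\d_X u}\, u \,+\, R(\d_X u, u),
\]
with three estimates chosen to avoid any loss of derivative: (i) the classical bound $\|T_u\,\d_X u\|_{B^0_{\infty,1}} \lesssim \|u\|_{L^\infty} \|\d_X u\|_{B^0_{\infty,1}}$; (ii) the negative-index paraproduct estimate $\|T_f g\|_{B^0_{\infty,1}} \lesssim \|f\|_{B^{-1}_{\infty,\infty}} \|g\|_{B^1_{\infty,1}}$, combined with the embedding $B^0_{\infty,1} \hookrightarrow B^{-1}_{\infty,\infty}$, yielding $\|T_{\d_X u}\, u\|_{B^0_{\infty,1}} \lesssim \|\d_X u\|_{B^0_{\infty,1}} \|u\|_{B^1_{\infty,1}}$; and (iii) the remainder bound (valid for $s_1+s_2 > 0$), producing $R(\d_X u, u) \in B^1_{\infty,1} \hookrightarrow B^0_{\infty,1}$ with the same control. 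Adding the three contributions gives
\[
\|\d_X |u|^2\|_{B^0_{\infty,1}} \,\lesssim\, \|\d_X u\|_{B^0_{\infty,1}} \,\bigl(\|u\|_{L^\infty} + \|u\|_{B^1_{\infty,1}}\bigr).
\]
After invoking the pointwise bound from \eqref{eq:cont-cond}, it remains to show that $t \mapsto \|u(t)\|_{L^\infty} + \|u(t)\|_{B^1_{\infty,1}}$ stays bounded on $[0,T^*)$.

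To obtain this last bound I would work at the vorticity level, rewriting \eqref{eq:vort} as $(\d_t + u\cdot\nabla)\omega = \rho^{-2}\,\d_X \Pi$ (using $\nabla^\perp(1/\rho) = -\rho^{-2}X$), and coupling it with the elliptic pressure equation $\div(\rho^{-1}\nabla \Pi) = -\mathrm{tr}\bigl((\nabla u)^2\bigr)$ obtained by taking the divergence of the momentum equation. Via critical-regularity transport estimates in $B^0_{\infty,1}$ (of the logarithmic type underlying the critical framework, see Remark~\ref{r:delta}), together with the pointwise hypothesis on $\d_X u$ used to estimate the pressure source $\rho^{-2}\d_X \Pi$, I expect to close a linear differential inequality for $\|\omega(t)\|_{B^0_{\infty,1}}$, whose Gronwall integration keeps this quantity finite on $[0,T^*)$. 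Biot--Savart then propagates the bound to $\|u\|_{B^1_{\infty,1}}$, while the $L^\infty$ part follows by interpolation with the conserved $L^2$ energy.

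The principal obstacle lies precisely in this last step: propagating the $B^0_{\infty,1}$-regularity of $\omega$ at critical regularity, without losing derivatives in the transport estimate or when handling the variable-coefficient pressure $\Pi$, demands meticulous bookkeeping of paraproducts and commutators involving $1/\rho$, $\nabla\Pi$ and $u$. This is also where the pointwise form of the hypothesis enters in an essential way: only a pointwise bound on $\|\d_X u\|_{B^0_{\infty,1}}$ (rather than an $L^1_T$ one, as in Theorem~\ref{th:cont-crit_sum}) permits the logarithmic factors produced by the critical transport estimate to be absorbed, thereby turning the estimate into a genuine linear Gronwall inequality and forcing boundedness of $\|u\|_{B^1_{\infty,1}}$ up to $T^*$.
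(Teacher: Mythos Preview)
Your reduction to Theorem~\ref{th:cont-crit_sum} via the product estimate
\(
\|\d_Xu\cdot u\|_{B^0_{\infty,1}}\lesssim\|\d_Xu\|_{B^0_{\infty,1}}\,\|u\|_{B^1_{\infty,1}}
\)
is correct, but notice what it commits you to: a \emph{uniform} bound on $\|u(t)\|_{B^1_{\infty,1}}$ over $[0,T^*[$. Since $\|\nabla u\|_{L^\infty}\le\|u\|_{B^1_{\infty,1}}$, this is already strictly stronger than the continuation condition \eqref{est:Du} of Proposition~\ref{p:cont:pri}; if you can prove it, you bypass Theorem~\ref{th:cont-crit_sum} entirely. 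So the heart of the matter is really your last paragraph, and there the argument does not close.

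The gap is the pressure. Working with $\omega$ and its equation \eqref{eq:vort} forces you to control $\rho^{-2}\d_X\Pi$ in $B^0_{\infty,1}$. But $\d_X\Pi=X\cdot\nabla\Pi$ is a product of $X=\nabla^\perp\rho$ with $\nabla\Pi$, and any paraproduct estimate on it will produce factors like $\|X\|_{B^0_{\infty,1}}\,\|\nabla\Pi\|_{L^\infty}$ or $\|\nabla\Pi\|_{B^0_{\infty,1}}$. From the elliptic equation for $\Pi$ one only has $\|\nabla\Pi\|_{L^\infty}\lesssim 1+\|\nabla u\|_{L^\infty}$ (see Lemma~\ref{l:press-est}), while $\|X\|_{B^0_{\infty,1}}$ itself already carries a factor $U(t)=1+\int_0^t\|\nabla u\|_{L^\infty}$ from the improved transport estimate \eqref{est:Drho-B^0}. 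Combined with the outer factor $U(t)$ coming from Theorem~\ref{th:B^0} applied to $\omega$, the resulting differential inequality for $U$ is at least quadratic, not linear, and does not yield finiteness on $[0,T^*[$. The hypothesis on $\d_Xu$ gives no direct grip on $\d_X\Pi$: Remark~\ref{r:pressure} relates $\d_Xu$ to $\d_X\nabla\Pi$, not to $\d_X\Pi$.

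The paper avoids this circularity precisely by \emph{not} using $\omega$. It works instead with $\eta=\curl(\rho u)$, whose transport equation \eqref{eq:eta} has source $\d_Xu\cdot u$ and \emph{no pressure term}. The nonlinearity $\d_Xu\cdot u$ is then estimated in $B^0_{\infty,1}$ not by paying a full $\|u\|_{B^1_{\infty,1}}$ (as you do), but via the logarithmic Besov interpolation of Lemma~\ref{l:interp-log} and Proposition~\ref{p:log-interp}, which squeezes the remainder $\mc R(\d_Xu,u)$ into a bound of the form $\|\d_Xu\|_{B^0_{\infty,1}}\,\|u\|_{L^\infty}\,\log\big(e+\|\nabla u\|_{L^\infty}\big)$. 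This logarithmic loss is exactly weak enough to turn the final inequality for $U$ into an Osgood-type relation $U'\lesssim U\log(e+U)$, which does stay bounded on $[0,T[$. The pointwise hypothesis \eqref{eq:cont-cond} is used at that very step, to pull $\|\d_Xu\|_{B^0_{\infty,1}}$ out of the time integral in \eqref{est:eta_Besov_fin}.
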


To conclude this part, let us formulate some remarks on the continuation conditions appearing in Theorems \ref{th:cont-subcrit}, \ref{th:cont-crit_sum} and
\ref{th:cont-crit}. The first one aims to relate our results with the already mentioned Beale-Kato-Majda continuation criterion %\cite{BKM}
for the incompressible Euler equations (see also Chapter 7 of \cite{BCD} and Chapter 4 of \cite{Maj-Bert} for more details and generalisations of it).

\begin{rmk} \label{r:geom}
The continuation criteria \eqref{eq:cont_subcrit}, \eqref{eq:cont-cond_sum} and \eqref{eq:cont-cond} are geometric in nature. Indeed,
they state that it is enough to control of $\nabla u$ only on a specific direction, namely the tangent direction to the level lines of the density.

To some extent those criteria, and especially the one of Theorem \ref{th:cont-subcrit}, can be seen as non-homogeneous counterparts
of the celebrated Beale-Kato-Majda continuation criterion \cite{BKM} for the (classical) homogeneous Euler equations, inasmuch as, also in that criterion,
it is enough to bound only certain combinations of the derivatives of $u$ (precisely, the ones appearing in the vorticity) in order to ensure
the continuation of the solution.

It should be noted, however, that the Beale-Kato-Majda criterion is linear, whereas ours is a \emph{non-linear} continuation criterion, which is rather unusual in
mathematical fluid mechanics.
\end{rmk}

The next remark, instead, makes a link between the geometric quantity investigated here and the notion of
\emph{striated regularity} \tsl{\`a la Chemin}, see \tsl{e.g.} \cite{Ch_1995}
(see also Chapter 7 of \cite{BCD} for more details and references about this notion and its generalisations). The propagation of that
kind of regularity for the density-dependent Euler system \eqref{eq:dd-E} was investigated in \cite{F_2012}.

\begin{rmk} \label{r:striated}
From the results of \cite{F_2012}, % in the context of propagation of striated regularity \tsl{\`a la Chemin} for the Euler system \eqref{eq:dd-E},
it may be possible to deduce that one can continue the solution as soon as the gradient of the vorticity is controlled (in H\"older norms)
along certain directions, which belong to a \emph{non-degenerate} family $\mc Y$ of vector fields and which are
transported by the flow associated to $u$.
%Loosely speaking, the non-degeneracy condition (which can be propagated
%by the flow, if it holds true initially) requires to dispose, at any point $x\in\R^d$, of a set $\big(Y_1\ldots Y_{d-1}\big)$ of vector fields in $\mc Y$
%which are linearly independent. So, one can control the derivatives of the vorticity in $d-1$ directions, while the control on the
%remaining direction is recovered by combining a $L^\infty$ bound for the vorticity with the divergence-free constraint over $u$.
When working with the velocity field, owing to Lemma 4.6 of \cite{F_2012}, this would be equivalent to require a bound for the $\mc C^\veps$ norm of $\d_Yu$,
for some $\veps>0$ and for any vector field $Y\in\mc Y$.

One must notice that the vector field $X=\nabla^\perp\rho$ is transported by the flow associated to $u$; hence, it can be chosen as a legitimate element
of the family $\mc Y$ mentioned above.
However, we see that our results improve the continuation criterion conjectured above (namely, the one based on propagation of $\mc C^\veps$ regularity for $\d_Yu$
for any $Y\in\mc Y$) in several aspects.
\begin{enumerate}[(i)]
\item  %First of all, we
We do not need the whole family of vector fields to ensure continuation.
Observe that ${\rm card}(\mc Y) \geq d+1$ and, when $d=2$, one typically needs three vector fields (see \tsl{e.g.} \cite{Ch_1995} and \cite{Paicu-Z}).
Instead, we only need to control $\nabla u$ along one precise direction, that is $X$.

\item %In addition, differently
Differently from the case of the family $\mc Y$, no kind of non-degeneracy conditions must be imposed on $X$;
for instance, $X$ needs not be non-vanishing.

\item %Finally, we
We require a control on much lower regularity norms of $\d_Xu$, rather than $\mc C^\veps$ (which is the natural smoothness appearing
in the context of striated regularity, see \tsl{e.g.} \cite{Ch_1995} and \cite{F_2012}).
\end{enumerate}
\end{rmk}

As a last comment, we point out that it could be possible (actually, this is easy in the subcritical case) to replace, in the continuation criteria of
Theorems \ref{th:cont-subcrit} and \ref{th:cont-crit}, the geometric condition over $u$ by an analogous condition over the pressure gradient $\nabla \Pi$.
We refer to Remark \ref{r:pressure} below for more comments about this issue.

\subsection{Ideas of the proof} \label{ss:ideas-proof}

The proof of the main results relies on a novel approach to study regularity of solutions to the density-dependent incompressible Euler system \eqref{eq:dd-E}.
Such an approach consists in essentially two basic ingredients, which we are now going to present.

First of all, the regularity of the density is not analysed by looking at the transport equation for $\rho$, see the first equation in \eqref{eq:dd-E},
and its space derivatives. Instead, one (rather, and equivalently) looks at the evolution of the vector field $X\,:=\,\nabla^\perp\rho$. It turns out
that this vector field is \emph{transported} (in the sense of 1-forms) by the flow of the velocity field $u$, \tsl{i.e.} it satisfies the equation
\begin{equation} \label{intro_eq:X}
 \d_tX\,+\,u\cdot\nabla X\,=\,\d_Xu\,.
\end{equation}
A particular advantage of this equation, besides its semplicity, is that it makes immediately appear the quantity $\d_Xu$, which precisely encodes
the kind of geometric information we are interested in throughout this work.

The second ingredient consists in renouncing to propagate the regularity of the vorticity $\o\,=\,\d_1u^2\,-\,\d_2u^1$
of the flow by looking at its equation. As a matter of fact, in the non-homogeneous setting, the equation for $\o$ in $2$-D reads as in \eqref{eq:vort},
%\[
% \d_t\o\,+\,u\cdot\nabla\o\,+\,\nabla^\perp\left(\frac{1}{\rho}\right)\cdot\nabla\Pi\,=\,0\,,
%\]
hence it looks complicated owing to the presence of the pressure gradient.
Instead, the idea is to introduce a new quantity $\eta$, defined as the ``vorticity'' of the momentum $m\,:=\,\rho\,u$, namely
\[
 \eta\,:=\,\d_1m^2\,-\,\d_2m^1\,,
\]
which allows us to get rid of the pressure from the equations. Indeed, from the momentum equation
one finds the relation satisfied by $\eta$:
\begin{equation} \label{intro_eq:eta}
 \d_t\eta\,+\,u\cdot\nabla\eta\,=\,\d_Xu\cdot u\,.
\end{equation}
This is a simple transport equation, where again the geometric information $\d_Xu$ comes into play.
Notice that the vector field $m$ is no more divergence-free, so one cannot recover it simply from $\eta$: an information on its divergence
is needed. However, we remark that
\[
 \div m\,=\,u\cdot\nabla\rho\,=\,-\,\d_t\rho\,,
\]
which is somehow a lower-order term, as no derivatives of $u$ come into play. Therefore, the key point really consists in exhibiting a control
on suitable norms of $\eta$.

With all these ingredients at hand, it is possible to establish (see Section \ref{s:prelim}) several \tsl{a priori} bounds for suitable Lebesgue norms
of the solution $\big(\rho,u,\nabla\Pi\big)$ to the non-homogeneous Euler system. It turns out that, thanks to all those bounds and to
a well-known logarithmic interpolation inequality, combined with an application of the Osgood lemma,
it is possible to give a rather direct proof of Theorem \ref{th:cont-subcrit}, dealing with the continuation criterion in the subcritical setting.

Let us now delve into the critical regularity framework.
In this case, one does not dispose anymore of the logarithmic interpolation inequality. Thus, the strategy consists in
using Proposition \ref{p:cont:pri} and directly estimating the $L^1_T(L^\infty)$ norm of the gradient of the velocity. For this, as already remarked above,
we want to avoid the use of the vorticity $\o$. Instead, we recover an expression for $\nabla u$ from the Helmholtz decomposition of the
momentum $m$. In particular, this expression involves the presence of two singular integral operators, which, as is well-known,
do not act continuously over $L^\infty$. In order to treat them in a low (as low as possible) regularity framework,
the natural choice is to work in the Besov space $B^0_{\infty,1}$, whence its presence in the statement of Theorems \ref{th:cont-crit_sum}
and \ref{th:cont-crit}.

We point out here that any other choice of the functional framework where studying the singular integrals
would result in the appearance of higher order regularity norms, which are out of control in the critical setting, thus leading to continuation criteria which
are not very intersting.

Let us go back to the bound of the singular integrals in the space $B^0_{\infty,1}$.
%continuation criteria of Theorems \ref{th:cont-crit_sum} and \ref{th:cont-crit}, and 
We notice that the simple transport structure of equations \eqref{intro_eq:X} and \eqref{intro_eq:eta} reveals to be crucial again, here:
indeed, one can take advantage of improved transport estimates \cite{Vis, HK}, which ensure bounds for the $B^0_{\infty,1}$ norms of $X$ and $\eta$,
with a growth which is only \emph{linear} in the Lipschitz norm of the transport field $u$.
This argument is enough to prove Theorem \ref{th:cont-crit_sum}: the norms appearing in \eqref{eq:cont-cond_sum} are precisely the right-hand sides
of equations \eqref{intro_eq:X} and \eqref{intro_eq:eta}.

As a finaly step to prove Theorem \ref{th:cont-crit}, one wants to bound the norm of $\d_Xu\cdot u$ in order to make only a condition on $\d_Xu$ appear.
Due to the critical regularity setting,
and especially to the fact that the space $B^0_{\infty,1}$ is not an algebra, this reveals to be hard. More precisely, bounding the remainder term
in a paraproduct decomposition of $\d_Xu\cdot u$ usually requires to work in positive regularity spaces, thus causing a loss of regularity.
We refer to Remark \ref{r:power} for more comments about that.
We also notice here that, as already pointed out in Remark \ref{r:delta}, in the critical setting a continuation criterion can be effective only
if it requires a control on norms which are \emph{at most} of order $B^0_{\infty,1}$, and \emph{not} higher.

The crucial point to make the $B^0_{\infty,1}$ norm of $\d_Xu$ appear despite the critical setting, is the use of the class of logarithmic Besov spaces.
This allows to produce a finer analysis, in particular regarding embeddings and interpolation, as stated in Lemma \ref{l:interp-log} and Proposition \ref{p:log-interp}
below. Thanks to those results, we are able to avoid any loss of derivatives on the geometric quantity and to prove the continuation criterion of Theorem
\ref{th:cont-crit} (the price to pay is to impose a pointwise condition in time, rather than an integral one).

To conlude, let us mention that the approach consisting in estimating the $L^1_T(L^\infty)$ norm of $\nabla u$
works whenever the Besov regularity indices verify one of the two properties in assumption \tbf{(A1)}. % \eqref{cond:Lipschitz},
However, it gives rise to stronger conditions than the one appearing in \eqref{eq:cont_subcrit},
whence our interest in \eqref{eq:cont-cond_sum} and \eqref{eq:cont-cond} only in the critical setting.

\subsection*{Organisation of the paper}
%\addcontentsline{toc}{subsection}{Organisation of the paper}

The paper is organised in the following way.

In Section \ref{s:tools} we recall some basic tools, essentially coming from Littlewood-Paley theory, paradifferential calculus
and transport equations theory, which will be needed in our study.
We will develop our discussion in the context of \emph{logarithmic} Besov spaces,
which enable us to perform a finer analysis, required for treating the critical regularity framework.
Most of the material of this section is classical; however, it also contains the statement and proof of two refined interpolation inequalities
(which are crucial for our study) in the framework of logarithmic Besov spaces.

In Section \ref{s:prelim}, we establish some preliminary bounds for suitable Lebesgue norms of the solution. The crucial ingredient for getting those estimates
is the introduction of the new unknown $\eta$, as mentioned in Subsection \ref{ss:ideas-proof} above. At the same time, with those
bounds at hand, one is already in the position of proving Theorem \ref{th:cont-subcrit} (continuation criterion in the subcritical case).

The last section of this work is Section \ref{s:proof}, which contains the proof of Theorems \ref{th:cont-crit_sum} and \ref{th:cont-crit}
(continuation criteria in the critical case).
As can be extrapolated from the previous discussions, Theorem \ref{th:cont-crit} in fact implies Theorem \ref{th:cont-crit_sum};
therefore, we will mainly focus on the proof of the former result, pointing out in the course of the proof how to get the latter.
On the other hand, we have to notice that the continuation condition \eqref{eq:cont-cond} requires a control
on a $L^\infty$ norm in time, whereas conditions \eqref{eq:cont_subcrit} and \eqref{eq:cont-cond_sum} rest on $L^1$ norms in time. In the course of the proof, we will
strive as much as possible to rely on $L^1$-type conditions and highlight the precise point where the $L^\infty$ condition is needed.

\subsection*{Notation} %\label{s:not}
%\addcontentsline{toc}{subsection}{Notation}

Before going on, we fix here some pieces of notation which will be used throughout this paper.

For an interval $I\subset \R$ and a Banach space $\mf B$,
we denote by $\mc C\big(I;\mf B\big)$ the space of continuous bounded functions on $I$ with values in $\mf B$. For any $p\in[1,+\infty]$,
the symbol $L^p\big(I;\mf B\big)$ stands for the space of measurable functions on $I$ such that the map $t\mapsto \left\|f(t)\right\|_{\mf B}$ belongs to $L^p(I)$.
We also define $\mc C_b\big(I;\mf B\big)\,=\,\mc C\big(I;\mf B\big)\cap L^\infty\big(I;\mf B\big)$.
When $I=[0,T[\,$, very often we will use the notation $L^r_T(\mf B)\,=\,L^r\big([0,T[\,;\mf B\big)$.

In addition, let $\mc A$ be the predual of $\mf B$ and denote by $\lan\cdot,\cdot\ran_{\mf B\times \mc A}$ the duality pairing between $\mc A$
and $\mf B=\mc A^*$.
Then we set $\mc C_\weak\big(I;\mf B\big)$ the space of functions $f$ from $I$ to $\mf B$ which are weakly continuous, namely such that, for any $\phi\in \mc A$,
the map $t\mapsto \lan f(t),\phi \ran_{\mf B\times \mc A}$ is continuous over $I$.
We use the same notation for scalar-valued and vector-valued functions.

In order to make the writing easier and the reading ligther, in our estimates we will often avoid to write the explicit multiplicative constants
which allow to pass from one line to the other. In this case, we will write $A\,\lesssim\, B$
meaning that there exists a constant $C>0$, which may depend also on the quantities under control in the continuation criteria and on the functional norms of
the initial data such that $A\,\leq\,C\,B$.

Given two operators $\mc P$ and $\mc Q$, we will denote their commutator by the standard notation
\[
 \big[\mc P,\mc Q\big]\,:=\,\mc P\mc Q\,-\,\mc Q\mc P\,.
\]

Finally, given a two-dimensional vector field $v$, we define $v^\perp$ to be its rotation of angle $\pi/2$. More precisely, if $v\,=\,\big(v^1,v^2\big)$, then
$v^\perp\,=\,\big(-v^2,v^1\big)$. Similarly, we define the operator $\nabla^\perp$ as $\nabla^\perp\,=\,\big(-\d_2,\d_1\big)$.

%\fra{End of notation!!!}

\section*{Acknowledgements}
%\addcontentsline{toc}{section}{Acknowledgements}

{\small

This work has been partially supported by the project CRISIS (ANR-20-CE40-0020-01), operated by the French National Research Agency (ANR),
by the Basque Government through the BERC 2022-2025 program and by the Spanish State Research Agency through the BCAM Severo Ochoa excellence accreditation
CEX2021-001142.
Finally, the first author also aknowledges the support of the European Union through the COFUND program [HORIZON-MSCA-2022-COFUND-101126600-SmartBRAIN3].

The author expresses his deep gratitude to J.-Y. Chemin and R. Danchin for very interesting and useful comments about a preliminary version of this work.

}

%\section{Main results} \label{s:results}
%
%In this section, we give the precise statement of our main results.

\section{Tools} \label{s:tools}

In this section, we collect several tools that are needed in our study.
We start by recalling some basic facts of Littlewood-Paley theory in $\R^d$, with $d\geq1$, and the definition of non-homogeneous Besov spaces.
%and some interpolation inequalities of Gagliardo-Nirenberg type.
Then, in Subsection \ref{ss:para} we introduce some elements of paradifferential calculus, with a special focus on paraproduct decomposition.
Finally, in Subsection \ref{ss:tools-est} we recall some well-known estimates for smooth solutions to transport equations.

Most of the material in this section is classical; details can be found \tsl{e.g.} in Chapter 2 of \cite{BCD}
(see also Chapters 4 and 5 of \cite{M-2008}).
However, since our analysis will require very fine estimates for treating the critical regularity setting,
we will develop our discussion in the framework of \emph{logarithmic Besov spaces},
as first introduced, to the best of our knowledge, in \cite{F_thesis} (see \cite{C-M} for the case of logarithmic Sobolev spaces). This will demand
some small adaptations of the classical theory, whose proofs can be found in \cite{F_thesis}-\cite{C-DS-F-M}, and, more importantly,
the proof of two new fine interpolation results, see Lemma \ref{l:interp-log} and Proposition \ref{p:log-interp} below.
%However, in the very last part of Subsection \ref{ss:para}, we will need to present some original generalisation (statements and corresponding proofs)
%of those results, in order to improve the estimates for the remainder operator of a paraproduct decomposition.

Notice that transport estimates in the class of logarithmic Besov spaces have recently been investigated in \cite{Mey-Seis}. However, we will not need
those estimates in the present paper.

\subsection{Littlewood-Paley theory and logarithmic Besov spaces} \label{ss:LP}

In this subsection, we recall the main ideas of Littlewood-Paley theory in $\R^d$ and introduce the class of logarithmic Besov spaces.
For the sake of generality, we consider here the case of any space dimension $d\geq1$.

Consider a smooth radial function $\chi$ supported in the ball $\mc B(0,2)$ of center $0$ and radius $2$ in $\R^d$, with $\chi\equiv1$ in a neighborhood of $\mc B(0,1)$,
such that the map $r\mapsto\chi(r\,e)$ is nonincreasing over $\R_+$ for all unitary vectors $e\in\R^d$. Set
$\varphi\left(\xi\right)\,:=\,\chi\left(\xi\right)-\chi\left(2\xi\right)$ and
$\vphi_j(\xi)\,:=\,\vphi(2^{-j}\xi)$ for all $j\geq0$.
The dyadic blocks $(\Delta_j)_{j\in\Z}$ are defined by\footnote{Hereafter, we agree  that the notation  $f(D)$ stands for 
the pseudo-differential operator associated to the Fourier multiplier $f$, namely the operator $f(D):\,u\mapsto\mc{F}^{-1}[f(\xi)\,\what u(\xi)]$.} 
$$
\Delta_j\,:=\,0\quad\mbox{ if }\; j\leq-2,\qquad\Delta_{-1}\,:=\,\chi(D)\qquad\mbox{ and }\qquad
\Delta_j\,:=\,\varphi(2^{-j}D)\quad \mbox{ if }\;  j\geq0\,.
$$
For any $j\geq0$, the low frequency cut-off operators $S_j$ are defined %on the space $\mc S'(\R^d)$ of tempered distributions over $\R^d$
as
\begin{equation} \label{eq:S_j}
S_j\,:=\,\chi(2^{-j}D)\,=\,\sum_{k\leq j-1}\Delta_{k}\qquad\mbox{ for }\qquad j\geq0\,.
\end{equation}
Notice that, for any integer $j$, $S_j$ and $\Delta_j$ are convolution operators by $L^1$ kernels whose norms are independent of the index $j$.
Thus, they act continuously from $L^p$ into itself, for any $p\in[1,+\infty]$.

Remark that the function $\chi$ can be chosen so that, for all $\xi\in\R^d$, one has the equality $\chi(\xi)+\sum_{j\geq0}\vphi_j(\xi)=1$.
Based on this partition of unity on the Fourier side, we obtain the so-called non-homogeneous \emph{Littlewood-Paley decomposition} of tempered distributions
on the ``physical space'' side:
%for any $u\in\mc{S}'$, one has the equality~$u=\sum_{j}\Delta_ju$ in the sense of $\mc{S}'$.
\[
\forall\,u\,\in\,\mc S'(\R^d)\,,\qquad\qquad\qquad u\,=\,\sum_{j\geq-1}\Delta_ju\qquad \mbox{ in the sense of }\quad \mc S'(\R^d)\,.
\]

Next, we recall the so-called \emph{Bernstein inequalities}. They establish important properties linked with the action of derivatives on tempered
distributions with compact spectrum\footnote{The spectrum of a tempered distribution is defined as the support of its Fourier transform.}.

\begin{lemma} \label{l:bern}
Let  $0<r<R$.   A constant $C>0$ exists so that, for any integer $k\geq0$, any couple $(p,q)$ 
in $[1,+\infty]^2$, with  $p\leq q$,  and any function $u\in L^p$, for all $\lambda>0$ we have:
\begin{align*}
{\supp}\, \widehat u\, \subset\, \mc B(0,\lambda R)\,=\,\big\{\xi\in\R^d\,\big|\,|\xi|\leq\lambda R \big\}\qquad
\Longrightarrow\qquad
\|\nabla^k u\|_{L^q}\, \leq\,
 C^{k+1}\,\lambda^{k+d\left(\frac{1}{p}-\frac{1}{q}\right)}\,\|u\|_{L^p} \\[1ex]
{\supp}\, \widehat u   \, \subset\, \big\{\xi\in\R^d\,\big|\, r\lambda\leq|\xi|\leq R\lambda\big\}
\quad\Longrightarrow\quad C^{-k-1}\,\lambda^k\|u\|_{L^p}\,\leq\,
\|\nabla^k u\|_{L^p}\,
\leq\,C^{k+1} \, \lambda^k\|u\|_{L^p}\,.
\end{align*}
\end{lemma}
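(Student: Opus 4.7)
The plan is to use a Fourier-side multiplication by an appropriate smooth cutoff, then pass to the physical side via Young's convolution inequality, and extract the powers of $\lambda$ by scaling.

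For the first (upper bound) inequality, I would start by fixing once and for all a smooth, compactly supported function $\psi\in\mc C^\infty_c(\R^d)$ such that $\psi\equiv 1$ on $\mc B(0,R)$ and $\supp\psi\subset \mc B(0,2R)$. Since $\supp\what u\subset \mc B(0,\lambda R)$, one has $\psi(\xi/\lambda)\,\what u(\xi)=\what u(\xi)$ and therefore, for any multi-index $\alpha$ of length $|\alpha|=k$,
\[
\d^\alpha u\,=\,u\,*\,h^\alpha_\lambda\,,\qquad \text{where}\qquad h^\alpha_\lambda\,:=\,\mc F^{-1}\!\left[(i\xi)^\alpha\,\psi(\xi/\lambda)\right].
\]
A change of variables on the Fourier side gives the scaling identity $h^\alpha_\lambda(x)=\lambda^{d+k}\,h^\alpha_1(\lambda x)$, where $h^\alpha_1=\mc F^{-1}[(i\xi)^\alpha\psi(\xi)]\in\mc S(\R^d)$. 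Next, Young's convolution inequality with $1+1/q=1/p+1/m$ yields
\[
\|\d^\alpha u\|_{L^q}\,\leq\,\|u\|_{L^p}\,\|h^\alpha_\lambda\|_{L^m}\,=\,\|u\|_{L^p}\,\lambda^{k+d(1/p-1/q)}\,\|h^\alpha_1\|_{L^m}\,,
\]
and summing over $|\alpha|=k$ produces the desired estimate. The only non-trivial point is to check that $\|h^\alpha_1\|_{L^m}\leq C^{k+1}$ with $C$ independent of $k$ and $\alpha$; this is obtained by noticing that $(i\xi)^\alpha\psi(\xi)$ is supported in $\mc B(0,2R)$ and by bounding its $L^m$-norm (or rather, the $L^1$-norms of sufficiently many derivatives, via integration by parts in the variable $x$) through derivatives of $\psi$ of order at most $N=N(d,m)$, which in turn are dominated by a geometric factor in $k$.

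For the second (lower bound) inequality in the annulus case, the strategy is symmetric: one exploits the fact that, on the annulus $\{r\lambda\leq|\xi|\leq R\lambda\}$, one can invert $\nabla^k$ modulo a smooth symbol. Concretely, fix a smooth function $\wtilde\vphi\in\mc C^\infty_c(\R^d\setminus\{0\})$ equal to $1$ on $\{r\leq|\xi|\leq R\}$ and supported in a slightly larger annulus, and decompose $\Id=\sum_{|\alpha|=k}m_\alpha(\xi)\,(i\xi)^\alpha$ on $\supp\wtilde\vphi$ for smooth functions $m_\alpha$ defined by a partition of unity on the unit sphere together with the identity $1=|\xi|^{-2k}\sum_{|\alpha|=k}c_\alpha\,\xi^{2\alpha}$ (multinomial expansion of $(\xi_1^2+\cdots+\xi_d^2)^k$). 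Then one writes $\what u(\xi)=\sum_{|\alpha|=k}m_\alpha(\xi)\,\wtilde\vphi(\xi/\lambda)\,(i\xi)^\alpha\what u(\xi)$, which on the physical side becomes
\[
u\,=\,\sum_{|\alpha|=k}g^\alpha_\lambda\,*\,\d^\alpha u\,,\qquad g^\alpha_\lambda\,:=\,\mc F^{-1}\!\left[m_\alpha(\xi)\,\wtilde\vphi(\xi/\lambda)\right].
\]
The homogeneity $m_\alpha(\lambda\xi)=\lambda^{-k}m_\alpha(\xi)$ gives $g^\alpha_\lambda(x)=\lambda^{d-k}g^\alpha_1(\lambda x)$, so Young's inequality with $m=1$ yields $\|u\|_{L^p}\leq C^{k+1}\lambda^{-k}\sum_{|\alpha|=k}\|\d^\alpha u\|_{L^p}$, which is the lower bound sought. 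The upper bound in the annulus case is a particular instance ($p=q$) of the first inequality already proved.

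I expect the main technical obstacle to be the uniform control of the constants in $k$, that is, verifying that the $L^m$ (respectively $L^1$) norms of $h^\alpha_1$ and $g^\alpha_1$ can be bounded by $C^{k+1}$ with $C$ independent of $k$ and of the specific multi-index $\alpha$; the rest is purely a matter of Fourier localization and scaling. This uniformity is obtained by hand-tracking how many derivatives of $\psi$ (respectively $\wtilde\vphi$) are needed to make $(1+|x|)^N h^\alpha_1(x)$ integrable and showing that, although higher-order derivatives of $(i\xi)^\alpha\psi(\xi)$ involve combinatorial factors, these grow at most geometrically in $k$ by the Leibniz rule and the compactness of $\supp\psi$.
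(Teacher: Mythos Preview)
The paper does not prove this lemma at all: it is stated as a classical result (``we recall the so-called \emph{Bernstein inequalities}'') and the reader is referred to Chapter~2 of \cite{BCD} for details. So there is no in-paper proof to compare against.

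Your outline is essentially the standard argument one finds in \cite{BCD}: localize with a smooth cutoff on the Fourier side, write the operator as convolution with a Schwartz kernel, extract the scale $\lambda$ by dilation, and apply Young's inequality. The identification of the only genuine technical point --- the $k$-uniform bound $\|h^\alpha_1\|_{L^m},\,\|g^\alpha_1\|_{L^1}\leq C^{k+1}$ --- is correct, and your sketch of how to get it (fixed number $N=N(d)$ of integrations by parts, Leibniz rule producing only geometrically many terms, compact support controlling $|\xi|^{k}$) is sound. One small cosmetic remark: for the lower bound, the multinomial decomposition $|\xi|^{2k}=\sum_{|\alpha|=k}c_\alpha\,\xi^{2\alpha}$ works, but you should keep in mind that the multinomial coefficients $c_\alpha$ can themselves be as large as $d^{k}$, and the number of multi-indices is polynomial in $k$; both are absorbed into $C^{k+1}$, so the final bound is fine, but this deserves an explicit line if you write the argument out in full.
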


By use of Littlewood-Paley decomposition, we can define the class of Besov spaces. As already anticipated, however, we will include in our discussion
the case of possible logarithmic regularities. We refer to \cite{F_thesis}-\cite{C-DS-F-M} for more details on the logarithmic Besov spaces,
which themselves are a generalisation of the logarithmic Sobolev spaces first introduced in \cite{C-M}.

\begin{defin} \label{d:B}
%  Let $s\in\R$ and $1\leq p,r\leq+\infty$. The \emph{non-homogeneous Besov space}
%$B^{s}_{p,r}\,=\,B^s_{p,r}(\R^d)$ is defined as the subset of tempered distributions $u$ for which
%$$
%\|u\|_{B^{s}_{p,r}}\,:=\,
%\left\|\left(2^{js}\,\|\Delta_ju\|_{L^p}\right)_{j\geq -1}\right\|_{\ell^r}\,<\,+\infty\,.
%$$
%
Let $s$ and $\alpha$ be real numbers, and $1\leq p,r\leq+\infty$. The \emph{non-homogeneous logarithmic Besov space}
$B^{s+\alpha\log}_{p,r}$ is defined as the subset of tempered distributions $u$ for which
$$
\|u\|_{B^{s+\alpha\log}_{p,r}}\,:=\,
\left\|\Big(2^{js}\,(2+j)^{\alpha}\,\|\Delta_ju\|_{L^p}\Big)_{j\geq-1}\right\|_{\ell^r}\,<\,+\infty\,.
$$
\end{defin}

Of course, the previous definition extends the classical one of Besov spaces $B^s_{p,r}$, which correspond to the case $\alpha=0$.
For simplicity of notation, when $s=0$ we will simply write $B^{\alpha\log}_{p,r}$ instead of $B^{0+\alpha\log}_{p,r}$.

Recall that, for $\alpha=0$ and for any $k\in\N$ and~$p\in[1,+\infty]$,
we have the following chain of continuous embeddings:
$$
B^k_{p,1}\hookrightarrow W^{k,p}\hookrightarrow B^k_{p,\infty}\,,
$$
where  $W^{k,p}$ stands for the classical Sobolev space of $L^p$ functions with all the derivatives up to the order $k$ in $L^p$.
Logarithmic regularities help in making those interpolation properties more precise.
We also recall that, for all $s\in\R$, one has $B^s_{2,2}\equiv H^s$, and that,
for any $k\in\N$ and any $\veps\in\,]0,1[\,$, one has $B^{k+\veps}_{\infty,\infty}\equiv \mc C^{k,\veps}$, where $\mc C^{k,\veps}$ denotes the classical H\"older space.
It goes without saying that the previous equivalence relations entail also equivalence of the related norms.

As an immediate consequence of the first Bernstein inequality, one gets the following embedding result, in the context of logarithmic regularities. This
is a natural generalisation of the classical embedding properties of Besov spaces, which can be found \tsl{e.g.} in \cite{BCD} (see Proposition 2.71 therein).
%\begin{prop}\label{p:embed}
%The space $B^{s_1}_{p_1,r_1}$ is continuously embedded in the space $B^{s_2}_{p_2,r_2}$ for all indices satisfying $p_1\,\leq\,p_2$ and
%$$
%s_2\,<\,s_1-d\left(\frac{1}{p_1}-\frac{1}{p_2}\right)\qquad\qquad\mbox{ or }\qquad\qquad
%s_2\,=\,s_1-d\left(\frac{1}{p_1}-\frac{1}{p_2}\right)\;\;\mbox{ and }\;\;r_1\,\leq\,r_2\,. 
%$$
%\end{prop}

\begin{prop} \label{p:log-emb}
The space $B^{s_1+\alpha_1\log}_{p_1,r_1}$ is continuously embedded in the space $B^{s_2+\alpha_2\log}_{p_2,r_2}$ whenever
$\,1\,\leq\,p_1\,\leq\,p_2\,\leq\,+\infty$ and one of the following conditions holds true:
\begin{itemize}
\item $s_2\,<\,s_1\,-\,d\,(1/p_1\,-\,1/p_2)\,$;
\item $s_2\,=\,s_1\,-\,d\,(1/p_1\,-\,1/p_2)\,$, $\,\alpha_2\,\leq\,\alpha_1\,$ and $\,1\,\leq\,r_1\,\leq\,r_2\,\leq\,+\infty\,$;
\item $s_2\,=\,s_1\,-\,d\,(1/p_1\,-\,1/p_2)\,$ and $\,\alpha_1\,-\,\alpha_2\,>\,1\,$.
\end{itemize}
\end{prop}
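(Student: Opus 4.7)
The plan is to reduce everything to a question on weighted $\ell^r$ sequence spaces by applying the first Bernstein inequality of Lemma \ref{l:bern} block-by-block. Since $\Delta_j u$ has spectrum contained in a ball of radius $\sim 2^j$ for $j\geq 0$ (and in a fixed ball for $j=-1$), we get uniformly in $j\geq -1$ the estimate
$$
\|\Delta_j u\|_{L^{p_2}}\,\lesssim\,2^{jd(1/p_1-1/p_2)}\,\|\Delta_j u\|_{L^{p_1}}\,.
$$
Setting $\sigma\,:=\,s_1-s_2-d(1/p_1-1/p_2)\geq 0$ and $\beta\,:=\,\alpha_1-\alpha_2$, this yields
$$
2^{js_2}(2+j)^{\alpha_2}\,\|\Delta_j u\|_{L^{p_2}}\,\lesssim\,b_j\,a_j\,,\qquad\mbox{with }\ b_j\,:=\,2^{-j\sigma}(2+j)^{-\beta}\,,\quad a_j\,:=\,2^{js_1}(2+j)^{\alpha_1}\|\Delta_j u\|_{L^{p_1}}\,,
$$
so that the proposition reduces to bounding the $\ell^{r_2}$ norm of $(b_j\,a_j)_{j\geq -1}$ by the $\ell^{r_1}$ norm of $(a_j)_{j\geq -1}$. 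For this one uses H\"older's inequality on the $j$-index with $1/r_2=1/q+1/r_1$ (interpreting $q=+\infty$ when $r_1\leq r_2$), together with the standard embedding $\ell^{r_1}\hookrightarrow\ell^{r_2}$ when $r_1\leq r_2$.

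In the first case, $\sigma>0$ forces an exponential decay of $b_j$, so $(b_j)_{j\geq -1}\in\ell^q$ for every $q\in[1,+\infty]$ regardless of the sign of $\beta$; H\"older's inequality then gives the conclusion for any choice of $r_1,r_2\in[1,+\infty]$. In the second case, $\sigma=0$ and $\beta\geq 0$ make the sequence $b_j=(2+j)^{-\beta}$ uniformly bounded, and the assumption $r_1\leq r_2$ lets us conclude by the embedding $\ell^{r_1}\hookrightarrow\ell^{r_2}$. In the third case, $\sigma=0$ and $\beta>1$ ensure $(b_j)_{j\geq -1}\in\ell^q$ for every $q\in[1,+\infty]$ (since $q\beta>1$), and H\"older applies again for any $r_1,r_2$.

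No real obstacle is expected: the statement is a clean generalisation of the classical Besov embeddings (Proposition 2.71 of \cite{BCD}), and the logarithmic weight only shows up as a polynomial factor $(2+j)^{-\beta}$ in the sequence-level analysis. The only point requiring a bit of care is the third case when $r_1>r_2$, where the sharp assumption $\beta>1$ is exactly what is needed so that $(b_j)_{j\geq-1}$ lies in $\ell^q$ for the conjugate exponent $q=r_1r_2/(r_1-r_2)$ (the worst case being $r_1=+\infty$, $r_2=1$, which forces $q=1$).
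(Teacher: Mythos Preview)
Your proposal is correct and follows precisely the approach the paper indicates: the paper does not give a proof of this proposition but states it ``as an immediate consequence of the first Bernstein inequality'' and refers to Proposition~2.71 of \cite{BCD} for the classical case and to \cite{F_thesis, C-DS-F-M} for the logarithmic setting. Your reduction via Bernstein to a weighted $\ell^r$ inequality, followed by H\"older on the index $j$ (or the trivial embedding $\ell^{r_1}\hookrightarrow\ell^{r_2}$ when $r_1\leq r_2$), is exactly the standard argument one finds in those references, and your case analysis of the sequence $b_j=2^{-j\sigma}(2+j)^{-\beta}$ is accurate, including the observation that $\beta>1$ is the sharp threshold needed for the worst pair $(r_1,r_2)=(+\infty,1)$.
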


In particular, the previous proposition implies the following refined embedding property: for $\alpha>1$, one has
%\[
%\forall\,\alpha>1\,,\qquad\qquad
$B^{\alpha\log}_{\infty, \infty} \hookrightarrow B^0_{\infty, 1} \hookrightarrow L^\infty$. %\,.
%\]
Actually, a simple computation allows to get the following generalisation of the previous chain of embeddings,
which will play a key role in the proof of Theorem \ref{th:cont-crit}:
\begin{equation} \label{emb:lg-B}
\forall\,r\in[1,+\infty]\,,\quad \forall\,\alpha>1-\frac{1}{r}\,,
\qquad\qquad 
B^{\alpha\log}_{\infty, r}\, \hookrightarrow\, B^0_{\infty, 1}\, \hookrightarrow\, L^\infty\,.
\end{equation}

Next, let us recall a classical logarithmic interpolation inequality, which corresponds to Proposition 2.104 of \cite{BCD}
(see also the subsequent Remark 2.105 therein).
\begin{prop} \label{p:interpol}
Fix some $\veps\in\,]0,1[\,$. There exists a constant $C>0$ such that, for any $f\in B^\veps_{\infty,\infty}$, one has the inequality
\[
\|f\|_{B^0_{\infty,1}}\,\leq\,\frac{C}{\veps}\,\|f\|_{B^0_{\infty,\infty}}\,
\left(1\,+\,\log\left(1\,+\,\frac{\|f\|_{B^\veps_{\infty,\infty}}}{\|f\|_{B^0_{\infty,\infty}}}\right)\right)\,.
\]
\end{prop}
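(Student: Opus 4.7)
The plan is to perform a dyadic frequency splitting at some level $N \in \N$ to be optimised at the end. The proof rests on the simple observation that we have two independent controls on each dyadic block $\Delta_j f$:
\begin{equation*}
\|\Delta_j f\|_{L^\infty}\,\leq\,\|f\|_{B^0_{\infty,\infty}}\qquad \text{and}\qquad \|\Delta_j f\|_{L^\infty}\,\leq\,2^{-j\veps}\|f\|_{B^\veps_{\infty,\infty}}\,.
\end{equation*}
The first bound is sharp at low frequencies but useless for summability; the second becomes sharp for $j \gtrsim 1/\veps$.

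First I would write, by the very definition of the Besov norm,
\begin{equation*}
\|f\|_{B^0_{\infty,1}}\,=\,\sum_{j\geq-1}\|\Delta_jf\|_{L^\infty}\,=\,\sum_{-1\leq j\leq N}\|\Delta_jf\|_{L^\infty}\,+\,\sum_{j>N}\|\Delta_jf\|_{L^\infty}\,.
\end{equation*}
For the low-frequency sum I would use the first bound, which yields at most $(N+2)\,\|f\|_{B^0_{\infty,\infty}}$. For the high-frequency sum I would use the second bound, together with the geometric sum
\begin{equation*}
\sum_{j>N}2^{-j\veps}\,=\,\frac{2^{-(N+1)\veps}}{1-2^{-\veps}}\,\leq\,\frac{C}{\veps}\,2^{-N\veps}\,,
\end{equation*}
where the last inequality follows from the fact that $1-2^{-\veps}\,\geq\,c\,\veps$ for all $\veps\in\,]0,1[\,$, with some absolute constant $c>0$. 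This is the place where the $1/\veps$ factor in the final inequality is produced.

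Next I would optimise over $N$. The natural choice is to pick $N\geq 0$ such that the two contributions are of the same order, namely $2^{-N\veps}\|f\|_{B^\veps_{\infty,\infty}}\,\approx\,\|f\|_{B^0_{\infty,\infty}}$. Concretely, I would set
\begin{equation*}
N\,:=\,\max\left\{0,\;\left\lceil\frac{1}{\veps\log 2}\,\log\!\left(\frac{\|f\|_{B^\veps_{\infty,\infty}}}{\|f\|_{B^0_{\infty,\infty}}}\right)\right\rceil\right\}\,.
\end{equation*}
With this choice, the high-frequency contribution is bounded (up to a multiplicative constant) by $\frac{1}{\veps}\|f\|_{B^0_{\infty,\infty}}$, while the low-frequency contribution is bounded by $(N+2)\|f\|_{B^0_{\infty,\infty}}$, which is of order $\frac{1}{\veps}\|f\|_{B^0_{\infty,\infty}}\bigl(1+\log(1+\|f\|_{B^\veps_{\infty,\infty}}/\|f\|_{B^0_{\infty,\infty}})\bigr)$.

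The main (minor) obstacle is to handle the edge case when the ratio $\|f\|_{B^\veps_{\infty,\infty}}/\|f\|_{B^0_{\infty,\infty}}$ is smaller than $1$, in which case the formula for $N$ would force $N<0$: this is precisely why the argument of the logarithm in the final inequality is $1+(\|f\|_{B^\veps_{\infty,\infty}}/\|f\|_{B^0_{\infty,\infty}})$ and not just the ratio itself. By taking $N=0$ in that regime, the low-frequency contribution reduces to a constant times $\|f\|_{B^0_{\infty,\infty}}$ and the geometric series in the high-frequency part already gives what we want. Combining the two cases yields the claimed inequality. Note that the structure of the argument is completely classical; the quantitative tracking of the $1/\veps$ prefactor is the only point requiring slightly careful book-keeping.
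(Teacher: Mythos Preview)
Your proposal is correct and follows the classical dyadic splitting-plus-optimisation argument. The paper does not actually prove this proposition---it is merely recalled from \cite{BCD} (Proposition 2.104 there)---but the very same strategy you describe is explicitly reproduced in the paper's proof of the subsequent Lemma \ref{l:interp-log}, so your approach matches the intended one.
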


Improvements of the above interpolation inequality are strictly necessary in the course of the proof to Theorem \ref{th:cont-crit}, in order to face the
critical regularity setting.
We present those improvements here below.
The crucial ingredient is to work in the setting of logarithmic Besov spaces, a fact which allows for a finer analysis.

The first result gives a refined quantitative version of the embeddings \eqref{emb:lg-B} and, at the same time,
improves Proposition \ref{p:interpol} by removing the logarithmic factor.
For simplicity of presentation, we focus on the case $r=+\infty$ only.

\begin{lemma} \label{l:interp-log}
Fix a real number $\alpha>1$. Then, there exists a universal constant $C>0$ such that, for any $f\in B^{\alpha\log}_{\infty,\infty}(\R^d)$,
one has
\[
 \|f\|_{B^{0}_{\infty,1}}\,\leq\,C\,\|f\|_{B^0_{\infty,\infty}}^{1-1/\alpha}\,\|f\|^{1/\alpha}_{B^{\alpha\log}_{\infty,\infty}}\,.
\]
\end{lemma}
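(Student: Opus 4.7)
The plan is to unpack the three norms in terms of dyadic block sizes and then optimize over a cutoff index, in the classical interpolation spirit. Set $A := \|f\|_{B^0_{\infty,\infty}}$ and $B := \|f\|_{B^{\alpha\log}_{\infty,\infty}}$. By the very definition of these norms, for every $j \geq -1$ one has the two pointwise bounds
\[
\|\Delta_j f\|_{L^\infty} \leq A \qquad \text{and} \qquad \|\Delta_j f\|_{L^\infty} \leq \frac{B}{(2+j)^\alpha}\,.
\]
These are the only ingredients available, so the strategy is forced: use the first estimate on low frequencies and the second on high frequencies.

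Fix an integer $N \geq -1$ (to be chosen later) and split
\[
\|f\|_{B^0_{\infty,1}} \,=\, \sum_{-1 \leq j \leq N} \|\Delta_j f\|_{L^\infty} \,+\, \sum_{j > N} \|\Delta_j f\|_{L^\infty} \,\leq\, (N+2)\,A \,+\, B \sum_{j > N} \frac{1}{(2+j)^\alpha}\,.
\]
Here the assumption $\alpha > 1$ enters crucially: it guarantees convergence of the tail and, more precisely, the comparison $\sum_{j > N} (2+j)^{-\alpha} \leq C_\alpha\,(2+N)^{1-\alpha}$, which can be checked by comparison with an integral. Thus
\[
\|f\|_{B^0_{\infty,1}} \,\leq\, (N+2)\,A \,+\, C_\alpha\, B\, (N+2)^{1-\alpha}\,.
\]

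The last step is to balance the two terms: the right-hand side, viewed as a function of $N+2$, is minimized when $(N+2)^\alpha$ is of order $B/A$. Since we need $N$ to be a non-negative integer, the natural choice is $N+2 := \lceil (B/A)^{1/\alpha} \rceil$ (the case $B \leq A$ is trivial since then the embedding $B^{\alpha\log}_{\infty,\infty} \hookrightarrow B^0_{\infty,1}$ from Proposition \ref{p:log-emb} gives the result directly). With this choice, both contributions are of order $A^{1-1/\alpha} B^{1/\alpha}$, which yields the claimed inequality with a constant $C$ depending only on $\alpha$ and $d$. I do not foresee any real obstacle: the proof is a one-parameter optimization, and the logarithmic weight $(2+j)^\alpha$ with $\alpha>1$ is precisely what is needed to ensure summability of the high-frequency tail without passing through any polynomial gain in $2^j$, which is what allows the refined conclusion (removal of the logarithmic factor present in Proposition \ref{p:interpol}).
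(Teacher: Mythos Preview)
Your proof is correct and follows essentially the same approach as the paper: split the $B^0_{\infty,1}$ sum at a cutoff index $N$, bound the low-frequency part by $(N+2)A$ and the high-frequency tail by $C_\alpha B(N+2)^{1-\alpha}$ via comparison with an integral, and then optimize over $N$. The only cosmetic differences are that the paper writes $(N+1)$ in place of your $(N+2)$ and does not single out the edge case $B\leq A$ (which in fact cannot occur strictly, since $(2+j)^\alpha\geq 1$ forces $B\geq A$).
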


\begin{proof}
We mimic the proof of Proposition \ref{p:interpol} from \cite{BCD}. More precisely, for any integer $N\geq1$ to be chosen later, we write
\begin{align*}
 \|f\|_{B^{0}_{\infty,1}}\,=\,\sum_{j\geq -1}\left\|\Delta_jf\right\|_{L^\infty}\,\leq\,\sum_{j= -1}^N\left\|\Delta_jf\right\|_{L^\infty}\,+\,
 \sum_{j\geq N+1}\left\|\Delta_jf\right\|_{L^\infty}\,.
\end{align*}
The first sum appearing on the right-hand side of the previous inequality can be bounded in the following simple way:
\[
 \sum_{j= -1}^N\left\|\Delta_jf\right\|_{L^\infty}\,\leq\,\left\|f\right\|_{B^0_{\infty,\infty}}\,(N+1)\,.
\]
For the second term, instead, we make the $B^{\alpha\log}_{\infty,\infty}$ norm appear by arguing as follows:
\begin{align*}
 \sum_{j\geq N+1}\left\|\Delta_jf\right\|_{L^\infty}\,&=\,\sum_{j\geq N+1}(j+2)^{\alpha}\,\left\|\Delta_jf\right\|_{L^\infty}\,(j+2)^{-\alpha} \\
 &\leq\,\left\|f\right\|_{B^{\alpha\log}_{\infty,\infty}}\,\sum_{j\geq N+1}(j+2)^{-\alpha}\,.
\end{align*}
The sum is of course convergence, as $\alpha>1$. Moreover, we can bound it by writing
\[
\sum_{j\geq N+1}(j+2)^{-\alpha}\,\leq\,\sum_{j\geq N}\int^{j+1}_j(\lambda+1)^{-\alpha}\,\dd\lambda\,=\,\int^{+\infty}_{N}(\lambda+1)^{-\alpha}\,\dd\lambda
\,\lesssim\,(N+1)^{-\alpha+1}\,.
\]

Putting everything together, in turn we find, for any $N\geq1$ fixed, the estimate
\begin{align*}
 \|f\|_{B^{0}_{\infty,1}}\,\lesssim\,\left\|f\right\|_{B^0_{\infty,\infty}}\,(N+1)\,+\,\left\|f\right\|_{B^{\alpha\log}_{\infty,\infty}}\,(N+1)^{-\alpha+1}\,.
\end{align*}
Optimising the choice of $N\geq 1$, we find
\[
(N+1)^{\alpha}\,\approx\,\frac{\left\|f\right\|_{B^{\alpha\log}_{\infty,\infty}}}{\left\|f\right\|_{B^0_{\infty,\infty}}}\,,
\]
which finally implies the sought interpolation inequality.
\end{proof}

The second result generalises the logarithmic interpolation inequality of Proposition \ref{p:interpol} to the logarithmic setting. The important point is that
this result allows to consider a large range of positive (logarithmic) regularities, even though we require the third Besov index $r$ to be $+\infty$.

\begin{prop} \label{p:log-interp}
Fix a real number $\alpha\in\big[0,3\log2\big]$. There exists a universal constant $C>0$ such that, for any Lipschitz function $f\in B^1_{\infty,\infty}(\R^d)$, one has
\[
\left\|f\right\|_{B^{\alpha\log}_{\infty,\infty}}\,\lesssim\,\left\|f\right\|_{B^{0}_{\infty,\infty}}\,
\left(1\,+\,\log\left(1\,+\,\frac{\left\|\nabla f\right\|_{B^{0}_{\infty,\infty}}}{\left\|f\right\|_{B^{0}_{\infty,\infty}}}\right)\right)^\alpha\,.
\]
\end{prop}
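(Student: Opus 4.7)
\textbf{Proof plan for Proposition \ref{p:log-interp}.}
The plan is to estimate $\|f\|_{B^{\alpha\log}_{\infty,\infty}} = \sup_{j \geq -1} (2+j)^\alpha \|\Delta_j f\|_{L^\infty}$ by splitting the range of dyadic indices $j$ at a well-chosen threshold $N$, using the trivial bound $\|\Delta_j f\|_{L^\infty} \leq \|f\|_{B^0_{\infty,\infty}}$ for low frequencies and the Bernstein inequality for high frequencies. This is the same philosophy as Proposition \ref{p:interpol}, but adapted so that the logarithmic weight $(2+j)^\alpha$ is carried along explicitly, producing the $\alpha$-th power of the logarithm.

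Set $A := \|f\|_{B^0_{\infty,\infty}}$ and $B := \|\nabla f\|_{B^0_{\infty,\infty}}$. For any integer $N \geq 0$ and any $-1 \leq j \leq N$, I would simply write
\[
(2+j)^\alpha \|\Delta_j f\|_{L^\infty} \leq (2+N)^\alpha A.
\]
For $j \geq N+1$, the second Bernstein inequality in Lemma \ref{l:bern} gives $\|\Delta_j f\|_{L^\infty} \lesssim 2^{-j} \|\Delta_j \nabla f\|_{L^\infty} \lesssim 2^{-j} B$, whence
\[
(2+j)^\alpha \|\Delta_j f\|_{L^\infty} \lesssim (2+j)^\alpha\, 2^{-j}\, B.
\]
The map $j \mapsto (2+j)^\alpha\, 2^{-j}$ (viewed as a function of a real variable) attains its maximum at $j = \alpha/\log 2 - 2$. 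The assumption $\alpha \leq 3\log 2$ ensures this critical point lies at $j \leq 1$, so the sequence is non-increasing on $\{j \geq 1\}$; consequently, for $N \geq 0$, the supremum over $j \geq N+1$ is realised at $j = N+1$ and is bounded by $C (3+N)^\alpha\, 2^{-N}\, B$.

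The final step is to optimize $N$. If $B \leq A$, I choose $N = 0$, and both contributions are controlled by a universal constant times $A$, which is consistent with the right-hand side (whose logarithmic factor is of order $1$). If $B > A$, I take $N$ to be the integer part of $\log_2(B/A)$, which is non-negative; this gives $2^{-N} \lesssim A/B$ and $(2+N) \lesssim 1 + \log(1 + B/A)$. Inserting these into the two contributions yields
\[
(2+N)^\alpha A + (3+N)^\alpha\, 2^{-N}\, B \;\lesssim\; \left(1 + \log(1 + B/A)\right)^\alpha A,
\]
which is the desired inequality. Taking the supremum over $j \geq -1$ of the two contributions closes the argument.

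The only mildly delicate point is checking that the restriction $\alpha \leq 3\log 2$ is what guarantees the monotonicity of $(2+j)^\alpha 2^{-j}$ past $j = 1$, so that the high-frequency supremum is genuinely attained at the endpoint $j = N+1$ rather than at some interior maximum. This is what replaces the role of the straightforward summation used in Proposition \ref{p:interpol}; since we work with the $\ell^\infty$ norm (third Besov index $r = +\infty$), no summation in $j$ is needed, which is why the exponent $\alpha$ ends up in the power of the logarithm rather than in a more complicated expression. Beyond that, the argument is a direct dyadic balancing, and I expect no further obstacle.
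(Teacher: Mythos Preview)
Your proposal is correct and follows essentially the same approach as the paper's proof: split the supremum at a threshold $N$, use the trivial bound for low frequencies and the second Bernstein inequality for high frequencies, exploit the monotonicity of $(2+j)^\alpha 2^{-j}$ for $j\geq 1$ (which is precisely where the restriction $\alpha\leq 3\log 2$ enters), and optimise $N$ by choosing it of order $\log_2(1+B/A)$. The only cosmetic difference is that you handle the case $B\leq A$ separately with $N=0$, whereas the paper works with $N\geq 1$ throughout and writes the optimisation in one stroke.
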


\begin{proof}
The case $\alpha=0$ being trivial, we restrict our attention to the case $\alpha>0$.
By definition of Besov norm, we have to estimate
\[
 \left\|f\right\|_{B^{\alpha\log}_{\infty,\infty}}\,=\,\sup_{j\geq-1}\Big(\big(2+j\big)^{\alpha}\,\left\|\Delta_jf\right\|_{L^\infty}\Big)\,.
\]
We argue in a similar way as before. 
Let us fix some $N\geq1$, whose precise value will be fixed later on. First of all, we can write
\begin{equation} \label{est:j-less-N}
\sup_{-1\leq j\leq N}\Big(\big(2+j\big)^\alpha\,\left\|\Delta_jf\right\|_{L^\infty}\Big)\,\lesssim\,\|f\|_{B^0_{\infty,\infty}}\,(2+N)^\alpha\,.
\end{equation}
On the other hand, by use of the second Bernstein inequality, we get
\begin{align*}
 \sup_{j\geq N+1}\Big(\big(2+j\big)^\alpha\,\left\|\Delta_jf\right\|_{L^\infty}\Big)\,&\lesssim\,
 \sup_{j\geq N+1}\Big(\big(2+j\big)^\alpha\,2^{-j}\,\left\|\Delta_j\nabla f\right\|_{L^\infty}\Big) \\
&\lesssim\,\left\|\nabla f\right\|_{B^0_{\infty,\infty}}\,\sup_{j\geq N+1}\Big(\big(2+j\big)^\alpha\,2^{-j}\Big)\,.
%&\lesssim\,(2+N)\,2^{-N}\,\left\|\nabla f\right\|_{L^\infty}\,,
\end{align*}
Now, we use the fact that the function $h(z)=(2+z)^\alpha\,2^{-z}$ is decreasing for $z\geq \frac{\alpha}{\log 2}-2$. Notice that, for $\alpha\leq\log8$,
one has $\frac{\alpha}{\log 2}-2\leq 1\leq N$. Hence, we infer that
\begin{equation} \label{est:j-more-N}
 \sup_{j\geq N+1}\Big(\big(2+j\big)^\alpha\,\left\|\Delta_jf\right\|_{L^\infty}\Big)\,\lesssim\,(2+N)^\alpha\,2^{-N}\,\left\|\nabla f\right\|_{B^0_{\infty,\infty}}\,.
\end{equation}

Putting inequalities \eqref{est:j-less-N} and \eqref{est:j-more-N} and optimising in $N\geq 1$, we are led to perform the choice
\[
 N\,\approx\,\log_2\left(1\,+\,\frac{\left\|\nabla f\right\|_{B^0_{\infty,\infty}}}{\|f\|_{B^0_{\infty,\infty}}}\right)\,,
\]
which in turn yields the claimed inequality. The proposition is thus proven.
\end{proof}

\begin{rmk} \label{r:log-interp}
Generalisations in spirit of Proposition \ref{p:interpol}, which would make H\"older norms appear in place of the $B^1_{\infty,\infty}$
norm of $f$, are also possible. We do not pursue this issue here; besides, this would not yield any improvement in the proof of our main results.
\end{rmk}

\subsection{Paradifferential calculus} \label{ss:para}

We now apply Littlewood-Paley decomposition to state some useful results from paradifferential calculus. Again, we refer to Chapter 2
of \cite{BCD} for details and further results for the classical case of Besov spaces $B^s_{p,r}$, to \cite{F_thesis} and \cite{C-DS-F-M}
for the case of logarithmic Besov spaces (which we treat here).

In fact, in this work we only need properties linked to paraproduct decomposition.
So, to begin with, let us introduce the definition of the \emph{paraproduct operator}, after J.-M. Bony \cite{Bony}: given two tempered
distributions $u$ and $v$, the paraproduct operator $\mc T_uv$ is (formally) defined as
\[
 \mathcal{T}_uv\,:=\,\sum_jS_{j-1}u\,\Delta_j v\,.
\]
The importance of this definition relies on the observation that, formally, we can decompose the product $u\,v$ into the sum of three terms:
\begin{equation}\label{eq:bony}
u\,v\;=\;\mathcal{T}_uv\,+\,\mathcal{T}_vu\,+\,\mathcal{R}(u,v)\,,
\end{equation}
where the \emph{remainder} operator $\mc R(u,v)$ is defined by the formula
\[
%\mathcal{T}_uv\,:=\,\sum_jS_{j-1}u\,\Delta_j v,\qquad\qquad\mbox{ and }\qquad\qquad
\mathcal{R}(u,v)\,:=\,\sum_j\sum_{|k-j|\leq1}\Delta_j u\,\Delta_{k}v\,.
\]
%The above operator $\mc T$ is called ``paraproduct'' whereas
%$\mc R$ is called ``remainder''.
The following propositions collect the main properties of the paraproduct and remainder operators when acting over logarithmic Besov spaces.
In order to understand better those statements, however, let us first recall the
(classical) properties of $\mc T_uv$ and $\mc R(u,v)$ in the context of the usual Besov spaces $B^s_{p,r}$.
\begin{prop}\label{p:op}
For any $(s,p,r)\in\R\times[1,+\infty]^2$ and $t>0$, the paraproduct operator 
$\mathcal{T}$ maps continuously $L^\infty\times B^s_{p,r}$ in $B^s_{p,r}$ and  $B^{-t}_{\infty,\infty}\times B^s_{p,r}$ in $B^{s-t}_{p,r}$.
Moreover, the following estimates hold:
\[
\|\mathcal{T}_uv\|_{B^s_{p,r}}\,\lesssim\,\|u\|_{L^\infty}\,\|\nabla v\|_{B^{s-1}_{p,r}}\qquad\mbox{ and }\qquad
\|\mathcal{T}_uv\|_{B^{s-t}_{p,r}}\,\lesssim\,\|u\|_{B^{-t}_{\infty,\infty}}\,\|\nabla v\|_{B^{s-1}_{p,r}}\,.
\]

For any $(s_1,p_1,r_1)$ and $(s_2,p_2,r_2)$ in $\R\times[1,+\infty]^2$ such that 
$s_1+s_2>0$, $1/p:=1/p_1+1/p_2\leq1$ and~$1/r:=1/r_1+1/r_2\leq1$,
the remainder operator $\mathcal{R}$ maps continuously~$B^{s_1}_{p_1,r_1}\times B^{s_2}_{p_2,r_2}$ into~$B^{s_1+s_2}_{p,r}$.
In the case $s_1+s_2=0$, if in addition $r=1$, the operator $\mathcal{R}$ is continuous from $B^{s_1}_{p_1,r_1}\times B^{s_2}_{p_2,r_2}$ with values
in $B^{0}_{p,\infty}$.
\end{prop}

We start by treating the paraproduct operator. The next statement collects its main continuity properties in the framework of logarithmic Besov spaces.
\begin{thm} \label{t:log-pp}
 Let $(s,\alpha,\beta)\,\in\R^3$ and $t>0$. Let also $(p,r,r_1,r_2)$ belong to $[1,+\infty]^4$.

The paraproduct operator 
$\mc T$ maps $L^\infty\times B^{s+\alpha\log}_{p,r}$ in $B^{s+\alpha\log}_{p,r}$,
and  $B^{-t+\beta\log}_{\infty,r_2}\times B^{s+\alpha\log}_{p,r_1}$ in $B^{(s-t)+(\alpha+\beta)\log}_{p,q}$,
with $1/q\,:=\,\min\left\{1\,,\,1/r_1\,+\,1/r_2\right\}$.
Moreover, the following estimates hold:
\begin{eqnarray*}
\|\mc T_uv\|_{B^{s+\alpha\log}_{p,r}} & \leq & C\,\|u\|_{L^\infty}\,\|\nabla v\|_{B^{(s-1)+\alpha\log}_{p,r}} \\
\|\mc T_uv\|_{B^{(s-t)+(\alpha+\beta)\log}_{p,q}} & \leq &
C\,\|u\|_{B^{-t+\beta\log}_{\infty,r_2}}\,\|\nabla v\|_{B^{(s-1)+\alpha\log}_{p,r_1}}\,.
\end{eqnarray*}
Moreover, the second inequality still holds true if $t=0$, when $\beta\leq0$ and $r_2=+\infty$.
\end{thm}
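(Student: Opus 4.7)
The proof proceeds along the classical lines of paraproduct estimates in non-homogeneous Besov spaces (see Chapter 2 of \cite{BCD}), the new ingredient being a careful tracking of the logarithmic weights $(2+j)^\alpha$, $(2+j)^\beta$ throughout all estimates. The starting point is the standard spectral localization property: with the usual choice of Littlewood--Paley cut-offs, every building block $S_{k-1}u\,\Delta_k v$ has Fourier transform supported in an annulus of size $\sim 2^k$, so that there exists a fixed integer $N_0$, depending only on the cut-offs, such that $\Delta_j(\mathcal{T}_u v)\,=\,\sum_{|k-j|\leq N_0}\Delta_j(S_{k-1}u\,\Delta_k v)$. Combining this identity with the $L^\infty\cdot L^p \hookrightarrow L^p$ bound and the second Bernstein inequality (Lemma \ref{l:bern}) yields the two basic bricks
\[
\|\Delta_j(\mathcal{T}_u v)\|_{L^p}\,\lesssim\,\sum_{|k-j|\leq N_0}\|S_{k-1}u\|_{L^\infty}\,\|\Delta_k v\|_{L^p}\qquad \mbox{and}\qquad \|\Delta_k v\|_{L^p}\,\lesssim\,2^{-k}\,\|\Delta_k \nabla v\|_{L^p}\,,
\]
the latter accounting for the $\nabla v$ appearing on the right-hand sides of the claimed inequalities.

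The first estimate then follows immediately from the trivial bound $\|S_{k-1}u\|_{L^\infty}\,\leq\,\|u\|_{L^\infty}$: it suffices to multiply the above by $2^{js}(2+j)^\alpha$, exploit the quasi-invariance of this weight on indices with $|k-j|\leq N_0$, and take the $\ell^r$ norm in $j$.

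For the second estimate, in the main range $t>0$, I would introduce the $\ell^{r_2}$-sequence $c_l\,:=\,2^{-lt}(2+l)^{\beta}\|\Delta_l u\|_{L^\infty}$ and the $\ell^{r_1}$-sequence $b_k\,:=\,2^{k(s-1)}(2+k)^{\alpha}\|\Delta_k\nabla v\|_{L^p}$, whose norms are precisely $\|u\|_{B^{-t+\beta\log}_{\infty,r_2}}$ and $\|\nabla v\|_{B^{(s-1)+\alpha\log}_{p,r_1}}$. Expanding the bound $\|S_{k-1}u\|_{L^\infty}\,\leq\,\sum_{l\leq k-2}2^{lt}(2+l)^{-\beta}\,c_l$ and combining with Bernstein, a short computation reduces the weighted quantity $2^{j(s-t)}(2+j)^{\alpha+\beta}\|\Delta_j(\mathcal{T}_u v)\|_{L^p}$ to a discrete convolution of $(c_l)$ against a sequence which is summable in $\ell^1$ thanks to the geometric factor $2^{-(k-l)t}$, multiplied pointwise by $(b_k)$. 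Discrete Young's and H\"older's inequalities then produce the $\ell^q$ bound with $1/q\,=\,\min\{1,1/r_1+1/r_2\}$.

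The main technical obstacle is the endpoint case $t=0$, $\beta\leq 0$, $r_2=+\infty$: here the geometric decay $2^{-(k-l)t}$ is no longer available, and a naive bound for $\|S_{k-1}u\|_{L^\infty}$ in terms of $\|u\|_{B^{\beta\log}_{\infty,\infty}}$ produces a logarithmic factor which, at first sight, does not fit in the target weight $(2+j)^{\alpha+\beta}$. Closing this case requires a sharper balance, in which the Bernstein gain $2^{-k}$ coming from $\|\Delta_k v\|_{L^p}\lesssim 2^{-k}\|\Delta_k\nabla v\|_{L^p}$ is used in combination with the precise annular localization of each individual piece $\Delta_l u\,\Delta_k v$ (for $l\leq k-2$) to absorb the logarithmic loss. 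This is the step where I expect the argument to be the most delicate, and the reason why the additional restrictions $\beta\leq 0$ and $r_2=+\infty$ must be imposed.
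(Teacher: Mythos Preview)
The paper does not actually prove this theorem: it is stated with proofs deferred to \cite{F_thesis} and \cite{C-DS-F-M}. So there is no in-paper argument to compare against, but I can assess your proposal on its own terms.

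Your treatment of the two principal cases is the standard argument and is correct. For the first estimate, bounding $\|S_{k-1}u\|_{L^\infty}\leq\|u\|_{L^\infty}$ and tracking the weight $(2+j)^\alpha$ via the quasi-invariance on $|k-j|\leq N_0$ is exactly right. For the second with $t>0$, your convolution-and-Young mechanism is the correct one: the sequence $\big(2^{-mt}(2+m)^{-\beta}\big)_{m\geq 0}$ lies in $\ell^1$ precisely because $t>0$, the logarithmic factor being harmless against geometric decay, and this is what makes the discrete Young inequality close.

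The endpoint case $t=0$, $\beta\leq 0$, $r_2=+\infty$ is where your proposal has a genuine gap. You propose to use ``the Bernstein gain $2^{-k}$\ldots in combination with the precise annular localization of each individual piece $\Delta_l u\,\Delta_k v$'' to absorb the logarithmic loss. But the Bernstein gain $2^{-k}$ is \emph{already} consumed in the main argument (it is what produces $\nabla v$ on the right-hand side), and the annular localization of $\Delta_l u\,\Delta_k v$ for $l\leq k-2$ is nothing more than the paraproduct structure itself---it provides no additional smallness. Concretely, after the same manipulations as in the $t>0$ case one is left needing
\[
\sup_{k\geq 0}\,(2+k)^\beta\,\|S_{k-1}u\|_{L^\infty}\,\lesssim\,\|u\|_{B^{\beta\log}_{\infty,\infty}}\,,
\]
and the triangle inequality only gives $(2+k)^\beta\sum_{l\leq k-2}(2+l)^{-\beta}\sim (2+k)$ for \emph{every} $\beta\leq 0$, which diverges. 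In particular, for $\beta=0$ and $\alpha=0$ the claimed inequality collapses to $\|\mathcal T_uv\|_{B^s_{p,r}}\lesssim\|u\|_{B^0_{\infty,\infty}}\|\nabla v\|_{B^{s-1}_{p,r}}$, which is exactly the endpoint excluded in the classical Proposition~\ref{p:op}. So either the stated range $\beta\leq 0$ carries an implicit stronger restriction (and the cited references should be consulted), or a genuinely different argument---not a refinement of the one you sketch---is required here. Your proposal does not supply it.
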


The next result, instead, collects the continuity properties of the remainder operator acting on logarithmic Besov spaces.

\begin{thm} \label{t:log-r}
 Let $(s,t,\alpha,\beta)\in\R^4$ and $(p_1,p_2,r_1,r_2)\in[1,+\infty]^4$ be such that
\[
\frac{1}{p}\,:=\,\frac{1}{p_1}\,+\,\frac{1}{p_2}\,\leq\,1\qquad\mbox{ and }\qquad
\frac{1}{r}\,:=\,\frac{1}{r_1}\,+\,\frac{1}{r_2}\,\leq\,1\,.
\]
There exists a suitable constant $C>0$ such that the following facts hold true.
\begin{itemize}
 \item[(i)] If $s+t>0$, then for  any
$(u,v)\in B^{s+\alpha\log}_{p_1,r_1}\times B^{t+\beta\log}_{p_2,r_2}$ we have
\[
\left\|R(u,v)\right\|_{B^{(s+t)+(\alpha+\beta)\log}_{p,r}}\;\leq\;C\,\|u\|_{B^{s+\alpha\log}_{p_1,r_1}}\,
\|v\|_{B^{t+\beta\log}_{p_2,r_2}}\,.
\]
\item[(ii)] If $s+t=0$, $\alpha+\beta\geq0$ and $r=1$, then for any 
$(u,v)\in B^{s+\alpha\log}_{p_1,r_1}\times B^{t+\beta\log}_{p_2,r_2}$
one has the inequality
\[
\left\|R(u,v)\right\|_{B^{(\alpha+\beta)\log}_{p,\infty}}\;\leq\;C\,\|u\|_{B^{s+\alpha\log}_{p_1,r_1}}\,
\|v\|_{B^{t+\beta\log}_{p_2,r_2}}\,.
\]
\end{itemize}
\end{thm}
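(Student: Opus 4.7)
The approach is the classical proof of Bony's remainder estimate, adapted to the logarithmic setting by carefully tracking the weight factors $(2+j)^\alpha$ and $(2+j)^\beta$ that appear in the definition of the logarithmic Besov norms. First I would invoke the standard Fourier localisation property: for $|k-j|\leq 1$, the product $\Delta_j u \, \Delta_k v$ has spectrum in a ball of radius $\lesssim 2^{j+1}$, so there exists a universal $N_0 \in \N$ such that $\Delta_m(\Delta_j u \, \Delta_k v) = 0$ whenever $m > j + N_0$. Applying $\Delta_m$ to the defining series of $R(u,v)$, and using the uniform $L^p$ continuity of the blocks $\Delta_m$ combined with H\"older's inequality for $1/p = 1/p_1 + 1/p_2$, one obtains the basic pointwise bound
\begin{equation*}
\|\Delta_m R(u,v)\|_{L^p} \, \lesssim \, \sum_{j \geq m - N_0} \|\Delta_j u\|_{L^{p_1}} \, \|\Delta_j v\|_{L^{p_2}}\,.
\end{equation*}

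Setting $a_j := 2^{js}(2+j)^\alpha \|\Delta_j u\|_{L^{p_1}} \in \ell^{r_1}$ and $b_j := 2^{jt}(2+j)^\beta \|\Delta_j v\|_{L^{p_2}} \in \ell^{r_2}$, the previous inequality rewrites as
\begin{equation*}
2^{m(s+t)}(2+m)^{\alpha+\beta} \|\Delta_m R(u,v)\|_{L^p} \, \lesssim \, \sum_{j \geq m - N_0} 2^{(m-j)(s+t)} \left(\frac{2+m}{2+j}\right)^{\alpha+\beta} a_j b_j\,.
\end{equation*}
The crucial observation is that, for $j \geq m - N_0$ and $m \geq -1$, one has $(2+m)/(2+j) \leq N_0 + 1$ with the bound $(2+m)/(2+j) \leq 1$ whenever $j \geq m$; more precisely, a direct computation gives $((2+m)/(2+j))^{\alpha+\beta} \lesssim (1+(j-m)^+)^{|\alpha+\beta|}$ uniformly in $m,j$. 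In other words, the logarithmic weight factor can be absorbed into the analysis at the cost of at most a polynomial correction in $j-m$.

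For case (i), when $s+t > 0$, this polynomial correction is easily dominated by the exponential decay $2^{(m-j)(s+t)}$, so that the right-hand side above reduces to a discrete convolution of an $\ell^1$ kernel with the sequence $(a_j b_j)_{j \geq -1}$. Young's inequality, combined with the discrete H\"older estimate $\|a_j b_j\|_{\ell^r} \leq \|a_j\|_{\ell^{r_1}} \|b_j\|_{\ell^{r_2}}$ (valid under $1/r = 1/r_1 + 1/r_2 \leq 1$), then produces the claimed $B^{(s+t)+(\alpha+\beta)\log}_{p,r}$ bound. For case (ii), when $s+t = 0$, the exponential decay vanishes; but the hypothesis $\alpha+\beta \geq 0$ makes the logarithmic factor uniformly bounded by the universal constant $(N_0+1)^{\alpha+\beta}$. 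The basic inequality then simplifies to
\begin{equation*}
(2+m)^{\alpha+\beta} \|\Delta_m R(u,v)\|_{L^p} \, \lesssim \, \sum_{j \geq -1} a_j b_j \, \leq \, \|a_j\|_{\ell^{r_1}} \|b_j\|_{\ell^{r_2}}\,,
\end{equation*}
where in the last step we use H\"older's inequality with $1/r_1 + 1/r_2 = 1$. Taking the supremum over $m \geq -1$ yields the claimed $B^{(\alpha+\beta)\log}_{p,\infty}$ estimate.

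The main obstacle, in comparison with the non-logarithmic remainder estimate recalled in Proposition \ref{p:op}, is purely technical: one must keep track of the additional factor $((2+m)/(2+j))^{\alpha+\beta}$ throughout the summation $j \geq m - N_0$ and verify that it remains tame (bounded, or at worst polynomially growing in $|j-m|$) irrespective of the sign of $\alpha+\beta$. Once this bookkeeping is carried out, the argument reduces to the familiar H\"older/Young template on weighted $\ell^r$-sequences and produces no loss of regularity, consistently with the statement.
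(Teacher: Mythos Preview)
Your proposal is correct and follows precisely the standard route one would expect: it is the classical Bony remainder argument (spectral localisation of $\Delta_j u\,\Delta_k v$ in a ball, H\"older, then Young/H\"older on the resulting weighted sequences), with the logarithmic weights $(2+j)^\alpha$, $(2+j)^\beta$ carried along and the ratio $((2+m)/(2+j))^{\alpha+\beta}$ shown to be harmless---uniformly bounded when $\alpha+\beta\geq 0$, and at worst polynomially growing in $j-m$ otherwise, hence absorbed by the exponential factor $2^{(m-j)(s+t)}$ in case (i). The paper does not supply its own proof of this statement; it is quoted as a known extension of Proposition~\ref{p:op} to the logarithmic setting, with proofs deferred to \cite{F_thesis} and \cite{C-DS-F-M}, where the argument is exactly the one you outline.
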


To conclude this part, let us recall %two classical results of paradifferential calculus. The first one concernes
the so-called \emph{tame estimates} for the product of two tempered distributions in  $B^s_{p,r}\cap L^\infty$, when $s>0$.

\begin{prop}\label{p:tame}
Let $(s,p,r)\in\R\times[1,+\infty]\times[1,+\infty]$ be such that that $s > 0$. Then, for any $f$ and $g$ belonging to $L^\infty\cap B^s_{p,r}$, the product
$fg$ also belongs to that space and we have
\begin{equation}
\label{alg:prop:2}
\left\| f\,g \right\|_{B^s_{p,r}}\, \lesssim \,\| f \|_{L^\infty}\, \|g\|_{B^s_{p,r}}\, +\, \| f \|_{B^s_{p,r}} \,\| g \|_{L^\infty}\,.
\end{equation}
\end{prop}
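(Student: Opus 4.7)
The plan is to establish the tame estimate via Bony's paraproduct decomposition \eqref{eq:bony}, writing
\[
fg\,=\,\mathcal{T}_fg\,+\,\mathcal{T}_gf\,+\,\mathcal{R}(f,g)\,,
\]
and then bounding each of the three pieces in $B^s_{p,r}$ separately, leveraging the continuity properties collected in Proposition \ref{p:op}. The two paraproduct terms will contribute the two summands on the right-hand side of \eqref{alg:prop:2}, while the remainder will need an extra (easy) embedding argument.

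For the paraproducts, I would invoke directly the first assertion of Proposition \ref{p:op}: since $\mathcal{T}$ maps $L^\infty\times B^s_{p,r}$ continuously into $B^s_{p,r}$, one gets at once
\[
\|\mathcal{T}_fg\|_{B^s_{p,r}}\,\lesssim\,\|f\|_{L^\infty}\,\|g\|_{B^s_{p,r}}\qquad\text{and}\qquad
\|\mathcal{T}_gf\|_{B^s_{p,r}}\,\lesssim\,\|g\|_{L^\infty}\,\|f\|_{B^s_{p,r}}\,,
\]
which are precisely the two summands appearing in \eqref{alg:prop:2}. Notice that no restriction on $s$ is needed at this stage; the hypothesis $s>0$ is only used for the remainder.

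For the remainder $\mathcal{R}(f,g)$, the idea is to use the trivial continuous embedding $L^\infty\hookrightarrow B^0_{\infty,\infty}$ (which follows from the fact that each dyadic block $\Delta_j$ is a convolution operator with an $L^1$ kernel of norm independent of $j$) in order to place $f$ in $B^0_{\infty,\infty}$. Then I apply the remainder assertion of Proposition \ref{p:op} with indices $(s_1,p_1,r_1)=(0,\infty,\infty)$ and $(s_2,p_2,r_2)=(s,p,r)$: the condition $s_1+s_2=s>0$ is satisfied precisely thanks to the hypothesis on $s$, whereas $1/p_1+1/p_2=1/p\leq 1$ and $1/r_1+1/r_2=1/r\leq 1$ trivially hold. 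This yields
\[
\|\mathcal{R}(f,g)\|_{B^s_{p,r}}\,\lesssim\,\|f\|_{B^0_{\infty,\infty}}\,\|g\|_{B^s_{p,r}}\,\lesssim\,\|f\|_{L^\infty}\,\|g\|_{B^s_{p,r}}\,,
\]
which is again bounded by the right-hand side of \eqref{alg:prop:2}. Summing the three contributions concludes the proof. The only subtle point is the remainder, where the condition $s>0$ is essential: at $s=0$ this argument breaks down (as recalled in the paper), which is consistent with the fact that $B^0_{p,r}$ is not stable under multiplication in general.
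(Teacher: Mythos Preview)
Your proof is correct and is the standard derivation of the tame estimate from the Bony decomposition. The paper does not actually supply a proof of this proposition: it is stated as a classical consequence of the paraproduct and remainder continuity properties gathered in Proposition~\ref{p:op}, which is precisely what you invoke.
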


\begin{comment}
The second result is in the same spirit of the classical Meyer's \emph{paralinearisation theorem}.
It will be important to estimate Besov norms related to functions of the density $\rho$.
The proof can be found in \cite{D1}.
\begin{prop}\label{p:comp}
Let $I$ be an open  interval of $~\R$ and let $F:I\rightarrow\R$ be a smooth function. 

Then for all compact subset $J\subset I$, all $s>0$ and all $(p,r)\in[1,+\infty]$, there exists a constant $C>0$
such that, for any function $a$ valued in $J$ and with gradient in $B^{s-1}_{p,r}$,  we have
$\nabla(F\circ a)\in B^{s-1}_{p,r}$ together with the estimate
\[
\|\nabla(F\circ a)\|_{B^{s-1}_{p,r}}\,\leq\,C\,\|\nabla a\|_{B^{s-1}_{p,r}}\,.
\]
\end{prop}
\end{comment}

\subsection{Transport estimates in Besov spaces} \label{ss:tools-est}

System \eqref{eq:dd-E} has essentially the structure of a coupling of transport equations, at least under condition \eqref{eq:vacuum}.
As a matter of fact, this transport structure will reflect also in the equations satisfied by the other quantities $X$ and $\eta$ we will consider,
recall relations \eqref{intro_eq:X} and \eqref{intro_eq:eta} of the Introduction.

Therefore, let us recall basic \tsl{a priori} estimates for smooth solutions to the linear transport equation
\begin{equation}\label{eq:TV}
\left\{\begin{array}{l}
\partial_t f\, +\, v \cdot \nabla f\, = \,g \\[1.5ex]
f_{|t = 0}\, =\, f_0\,.
\end{array}\right.
\end{equation}
Throughout this part, the velocity field $v=v(t,x)$ will always be assumed to be divergence-free, \tsl{i.e.} $\div v = 0$,
and Lipschitz continuous with respect to the space variable.

First of all, since $v$ is assumed to be smooth and divergence-free, classical results ensure us that, for any $p\in[1,+\infty]$ and any time $t\geq0$ for which
$v$ and $g$ are well defined, one has
\begin{equation} \label{est:transp-L}
 \left\|f(t)\right\|_{L^p}\,\leq\,\left\|f_0\right\|_{L^p}\,+\,\int^t_0\left\|g(\t)\right\|_{L^p}\,\dd\t\,.
\end{equation}

%Since some Sobolev regularity will be needed also on $v$, in view of Proposition \ref{p:embed} it will be natural to take $s>2$ in what follows.
The following statement, instead, pertains to the study of the transport equation \eqref{eq:TV} in the class of Besov spaces. It
contains the conclusions of Theorems 3.14 and 3.19 of \cite{BCD}.
\begin{thm}\label{th:transport}
Let $(s,p,r)\in\R\times[1,+\infty]^2$ satisfy the Lipschitz condition \eqref{cond:Lipschitz} and $T>0$ be fixed.
Assume that $v$ is a divergence-free vector field belonging to $L^1\big([0,T];B^s_{p,r}\big)$ such that, for some
$q > 1$ and $M > 0$, $v \in L^q\big([0,T];B^{-M}_{\infty, \infty}\big)$.
Finally, let $\s\in\R$ be such that
\[
\s\,\geq\,-\,d\,\min\left\{\frac{1}{p}\,,\,\frac{1}{p'}\right\}\qquad\qquad \mbox{ or, \ \ \ \ \  if }\ \ \div v=0\,,\quad 
\s\,\geq\,-\,1\,-\,d\,\min\left\{\frac{1}{p}\,,\,\frac{1}{p'}\right\}\,.
\]

Then, for any external force $g \in L^1\big([0,T];B^\s_{p,r}\big)$ and any initial datum $f_0 \in B^\s_{p,r}$, the transport equation \eqref{eq:TV} has a unique solution $f$ in the space:
\begin{itemize}
\item $\mc C\big([0,T];B^\s_{p,r}\big)$ if $r < +\infty$;
\item $\left( \bigcap_{\s'<\s} \mc C\big([0,T];B^{\s'}_{p,\infty}\big) \right) \cap \mc C_{\weak}\big([0,T];B^\s_{p, \infty}\big)$, in the case $r = +\infty$.
\end{itemize}
Moreover, after defining the function $V(t)$ as
\[
V(t)\,:=\,\int^t_0\left\|\nabla v(\t)\right\|_{L^\infty\cap B^{s-1}_{p,r}}\,\dd\t\,,
\]
the unique solution $f$ satisfies the following estimate, for a suitable universal constant $C>0$:
\begin{equation*} % \label{eq:TVEstimate}
\forall\,t\in[0,T]\,,\qquad
\| f(t) \|_{B^\s_{p,r}}\, \leq\, e^{C\,V(t)}\,\left(\| f_0 \|_{B^\s_{p,r}} + \int_0^t e^{-C\,V(\tau)}\, \| g(\t) \|_{B^\s_{p,r}} \dd\t\right)\,.
\end{equation*}
In the case when $v=f$, the previous estimate holds true with $V'(t)\,=\,\left\|\nabla f(t)\right\|_{L^\infty}$.
\end{thm}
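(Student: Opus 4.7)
The plan is to reduce the problem to a family of frequency-localised scalar equations and apply Grönwall's lemma. First I would apply the Littlewood--Paley operator $\Delta_j$ to \eqref{eq:TV}, which gives
\[
\d_t \Delta_j f\,+\,v\cdot\nabla \Delta_j f\,=\,\Delta_j g\,+\,R_j\,,\qquad R_j\,:=\,\big[v\cdot\nabla,\Delta_j\big]f\,.
\]
Since $v$ is divergence-free and smooth in the spatial variable, the basic $L^p$ transport estimate \eqref{est:transp-L} applied to $\Delta_j f$ yields
\[
\left\|\Delta_j f(t)\right\|_{L^p}\,\leq\,\left\|\Delta_j f_0\right\|_{L^p}\,+\,\int_0^t\left\|\Delta_j g(\t)\right\|_{L^p}\,\dd\t\,+\,\int_0^t\left\|R_j(\t)\right\|_{L^p}\,\dd\t\,.
\]

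The crucial step is then to control the commutator $R_j$. By a standard commutator lemma (in the spirit of Lemma 2.100 of \cite{BCD}), one has
\[
2^{j\s}\left\|R_j(\t)\right\|_{L^p}\,\leq\,C\,c_j(\t)\,\Big(\left\|\nabla v(\t)\right\|_{L^\infty}\,\left\|f(\t)\right\|_{B^\s_{p,r}}\,+\,\left\|\nabla v(\t)\right\|_{B^{s-1}_{p,r}}\,\left\|f(\t)\right\|_{B^\s_{p,r}}\Big)\,,
\]
for some sequence $(c_j(\t))_{j}$ in the unit ball of $\ell^r$. This estimate is obtained through a paraproduct decomposition of $v\cdot\nabla f$ and distinguishes the contributions from the low and high frequencies of both $v$ and $f$; the lower bound imposed on $\s$ in the statement is precisely what is needed to make the remainder part of that decomposition fall into $L^p$ without loss of derivatives, the improved threshold under $\div v=0$ coming from the possibility of rewriting $v\cdot\nabla f = \div(vf)$ in the remainder term and thus absorbing one derivative.

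Multiplying the frequency-localised inequality by $2^{j\s}$, taking the $\ell^r$ norm over $j\geq-1$ (using Minkowski's inequality in time) and applying the commutator bound, I obtain the integral inequality
\[
\|f(t)\|_{B^\s_{p,r}}\,\leq\,\|f_0\|_{B^\s_{p,r}}\,+\,\int_0^t\|g(\t)\|_{B^\s_{p,r}}\,\dd\t\,+\,C\,\int_0^t V'(\t)\,\|f(\t)\|_{B^\s_{p,r}}\,\dd\t\,,
\]
to which Grönwall's lemma provides the claimed exponential estimate. The special case $v=f$ requires a mild modification: thanks to the self-referential structure, the term involving $\|\nabla v\|_{B^{s-1}_{p,r}}$ can be absorbed, so only $\|\nabla f\|_{L^\infty}$ needs to appear in $V'$.

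For the existence and uniqueness portion, I would construct approximating solutions via a Friedrichs-type truncation (replacing $v$, $g$, $f_0$ by their low-frequency cut-offs $S_n v$, $S_n g$, $S_n f_0$), obtain smooth solutions $f_n$ by the method of characteristics, and use the \emph{a priori} bound above (uniform in $n$) to extract a weak-$*$ limit in $L^\infty_T(B^\s_{p,r})$; uniqueness follows by applying \eqref{est:transp-L} to the difference of two solutions, thanks to the Lipschitz regularity of $v$, possibly after a change of unknown if $\s<0$ to ensure that the difference lies in $L^p$. Time continuity with values in $B^\s_{p,r}$ for $r<+\infty$ is obtained by a density argument (approximating $f_0$ and $g$ by Schwartz objects, for which continuity is clear, and passing to the limit using the \emph{a priori} estimate), while in the case $r=+\infty$ only weak continuity is available, due to the lack of density of smooth functions in $B^\s_{p,\infty}$; strong continuity at lower regularity $\s'<\s$ then follows from the $L^\infty_T$ bound and interpolation. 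The main obstacle throughout is the commutator estimate: ensuring that it holds uniformly in $j$ over the full range of admissible $\s$ (and in particular at the endpoint $\s=-1-d\min(1/p,1/p')$ afforded by the divergence-free condition) is what forces the careful paraproduct analysis.
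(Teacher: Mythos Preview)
The paper does not give its own proof of this statement: it is quoted as a known result, with the explicit attribution ``It contains the conclusions of Theorems 3.14 and 3.19 of \cite{BCD}.'' Your sketch is precisely the standard argument from that reference (localise in frequency, estimate the commutator $[v\cdot\nabla,\Delta_j]f$ via paraproduct decomposition, take $\ell^r$ norms and apply Gr\"onwall, then run a regularisation/compactness scheme for existence and continuity), so there is nothing to compare.
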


The exponential growth, which appears in the previous estimate, of the Besov norm in terms of the Lipschitz norm of $v$ is very often dramatic
in applications to fluid mechanics, because, generally speaking, it is responsible for only local well-posedness results.
However, as discovered by Vishik in \cite{Vis} (see also \cite{HK} by Hmidi and Keraani for a different proof), the estimates improve
when working in spaces with $0$ index of regularity, exhibiting a behaviour closer to the $L^p$ case \eqref{est:transp-L}:
the $B^0_{p,r}$ norm of the solution grows only \emph{linearly} with the Lipschitz norm of the transport field $v$.
We refer to Theorem 3.18 of \cite{BCD} for the statement presented here.

\begin{thm}\label{th:B^0}
Assume that the velocity field $v$ satisfies $\nabla v \in L^1_T(L^\infty)$ and $\div v=0$. Let $(p,r) \in [1, +\infty]^2$.

Then there exists a constant $C = C(d)>0$, only depending on the space dimension $d\geq2$, such that, for any solution $f$ to problem \eqref{eq:TV} in
$\mc C\big([0,T];B^0_{p,r}\big)$, or in $\mc C_w\big([0,T];B^0_{p,\infty}\big)$ if $r=+\infty$, we have
\[ %\begin{equation}\label{eq:AnnInnLinTV}
\| f \|_{L^\infty_T(B^0_{p, r})}\, \leq\, C\, \left( \| f_0 \|_{B^0_{p, r}}\, +\, \| g \|_{L^1_T(B^0_{p, r})}\right)\;
\left( 1+\int_0^T\| \nabla v(\tau) \|_{L^\infty}\,{\rm d} \tau \right)\,.
\] %\end{equation}
\end{thm}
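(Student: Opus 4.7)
My plan follows the route going back to Vishik \cite{Vis}, with the simpler argument of Hmidi--Keraani \cite{HK}. The starting point is to apply the dyadic block $\Delta_j$ to the equation to get
\[
\partial_t \Delta_j f + v\cdot\nabla \Delta_j f = \Delta_j g + [v\cdot\nabla, \Delta_j] f,
\]
and then to apply the pure-$L^p$ transport estimate \eqref{est:transp-L} (which is available thanks to $\div v = 0$) blockwise, yielding
\[
\|\Delta_j f(t)\|_{L^p} \leq \|\Delta_j f_0\|_{L^p} + \int_0^t \|\Delta_j g(\tau)\|_{L^p}\,\dd\tau + \int_0^t \bigl\|[v\cdot\nabla, \Delta_j] f(\tau)\bigr\|_{L^p}\,\dd\tau.
\]
The entire difficulty lies in the commutator term on the right-hand side; in particular, the generic Besov estimate of Theorem \ref{th:transport} would apply at regularity $s = 0$ only at the price of an exponential factor in $V(t) := \int_0^t \|\nabla v\|_{L^\infty}\,\dd\tau$, whereas here we need the \emph{linear} factor $1 + V(T)$.

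The second step is the key refined commutator estimate. Using $\div v = 0$ and integration by parts in the convolution defining $\Delta_j$, one obtains the representation
\[
[v\cdot\nabla, \Delta_j] f(x) = \int \bigl(v(x) - v(y)\bigr)\cdot \nabla\phi_j(x-y)\,f(y)\,\dd y,
\]
where $\phi_j$ denotes the kernel of $\Delta_j$. Combining the Lipschitz bound $|v(x) - v(y)| \leq \|\nabla v\|_{L^\infty} |x - y|$ with the fact that $z\mapsto|z|\,|\nabla\phi_j(z)|$ is dominated, uniformly in $j$, by a Schwartz kernel of controlled $L^1$-norm, and then exploiting the spectral localization of each dyadic block of $f$, one arrives at
\[
\bigl\|[v\cdot\nabla, \Delta_j] f(\tau)\bigr\|_{L^p} \leq C\,\|\nabla v(\tau)\|_{L^\infty}\,c_j(\tau),
\]
with $\bigl\|(c_j(\tau))_j\bigr\|_{\ell^r} \lesssim \|f(\tau)\|_{B^0_{p,r}}$.

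The third and most delicate step is to avoid the exponential factor that a naive Gronwall argument would produce, after $\ell^r$-summation, from the resulting integral inequality. The key observation of Vishik and Hmidi--Keraani is that the commutator above merely \emph{redistributes} $L^p$-mass among neighbouring frequencies rather than creating any: pure transport under a divergence-free field is an exact $L^p$-isometry, and it is precisely this conservation that must be harnessed in the summation to keep the growth linear in $V(T)$. I would implement this through an absorption-plus-bootstrap argument on sub-intervals of $[0,T]$ over which $V$ is controlled by a small universal constant, carefully recombining the additive contributions across the partition so as to avoid the multiplicative blow-up that iteration would naively produce; this is the main obstacle. Once the blockwise bound has been propagated linearly in $V(T)$, summation in $\ell^r$ yields the stated estimate. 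Existence of the solution in the announced functional class then follows from a standard regularization-and-passage-to-the-limit scheme, with strong time continuity when $r < +\infty$ (and only weak-$\ast$ continuity when $r = +\infty$) being a consequence of the dyadic bound and the equation itself.
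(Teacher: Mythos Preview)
The paper does not prove Theorem~\ref{th:B^0}: it is recalled as a known tool, with explicit references to Vishik~\cite{Vis}, Hmidi--Keraani~\cite{HK}, and Theorem~3.18 of~\cite{BCD}. So there is no ``paper's own proof'' to compare against; your outline is to be measured against those references.

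Your first two steps are exactly the standard opening. The concern is your third step. You correctly identify the key phenomenon --- pure transport is an $L^p$-isometry, and the commutator only reshuffles mass among nearby frequencies --- but the implementation you propose (subdivide $[0,T]$ into subintervals on which $V$ grows by a small constant, absorb, and recombine additively) is not how the linear bound is actually obtained, and it is not clear it can be made to work: absorption on each subinterval gives a multiplicative factor, and iterating over $\sim V(T)$ subintervals reproduces the exponential you are trying to avoid. ``Carefully recombining additively'' does not by itself overcome this, and you flag it as ``the main obstacle'' without resolving it. The actual Hmidi--Keraani argument (reproduced in~\cite{BCD}) is structurally different: one decomposes the \emph{data}, writing $f=\sum_q f_q$ where each $f_q$ solves the transport equation with initial datum $\Delta_q f_0$ and forcing $\Delta_q g$. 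Each $f_q$ enjoys the exact $L^p$ bound $\|f_q(t)\|_{L^p}\le\|\Delta_q f_0\|_{L^p}+\int_0^t\|\Delta_q g\|_{L^p}$, and the commutator analysis is then used to control the spectral spreading of $f_q$ away from frequency $2^q$, which is what produces the factor linear in $V(T)$ after summation in $j$ and $q$. If you intend to give a self-contained proof, this is the mechanism you should spell out.
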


\section{Preliminary bounds and proof of Theorem \ref{th:cont-subcrit}} \label{s:prelim}

In this section, we collect several preliminary informations which are needed in the proof of our results.
First of all, we recall basic estimates coming from the energy conservation and the transport structure of the mass conservation equation.
After that, we exhibit a uniform bound for
$\nabla\rho$ in $L^\infty$. Next, % we show a similar bound for $u$ and, in the end,
we introduce a new unknown $\eta$, which has the advantage of making the pressure disappear from the equations and, at the same time,
highlighting the geometric quantity we are interested in. 
Finally, we will use this new unknown to control the $L^\infty$ norm of $u$.

Those bounds are all what is needed to prove Theorem \ref{th:cont-subcrit}. Therefore, the last part of this section will be devoted precisely
to the proof of that result. In particular, this will also validate the first blow-up criterion of Theorem \ref{th:blow-up}.

Throughout this section, we are going to work with assumption \eqref{eq:cont_subcrit}, that we recall here for the reader's convenience:
\begin{equation} \label{assump:weaker}
K_0\,:=\,\int^T_0\left\|\d_Xu(t)\right\|_{L^\infty}\,\dt\,<\,+\,\infty\,.
\end{equation}
Notice that condition \eqref{assump:weaker} is a weaker assumption than \eqref{eq:cont-cond_sum} and \eqref{eq:cont-cond}.
Hence, the bounds established in this section will be valid also when proving Theorems \ref{th:cont-crit_sum} and \ref{th:cont-crit}.

\subsection{Basic estimates for the density and the velocity field} \label{ss:energy}

Here we show estimates for suitable $L^q$ norms of the density $\rho$ and the velocity field $u$.

First of all, thanks to assumption \tbf{(A1)} and the divergence-free condition over $u$,
from the transport equation satisfied by $\rho$ one immediately obtains that
\begin{equation} \label{est:rho-inf}
\forall\,(t,x)\in[0,T[\,\times\R^2\,,\qquad\qquad \rho_*\,\leq\,\rho(t,x)\,\leq\,\rho^*\,.
\end{equation}
%Analogously, as also the quantity $\rho-1$ is transported by $u$, we get
%\begin{equation} \label{est:rho-2}
%\forall\,t\geq0\,,\qquad\qquad \left\|\rho(t)-1\right\|_{L^2}\,=\,\left\|\rho_0-1\right\|_{L^2}\,.
%\end{equation}
%As a matter of fact, the same equality holds in $L^p$, for any $p\in[2,+\infty]$.

Next, we perform an energy estimate on the momentum equation, namely the second equation appearing in system \eqref{eq:dd-E}.
Since the velocity field $u$ is divergence-free, the pressure term identically vanishes when multiplied in $L^2$ by $u$. Standard computations thus yield
\[
%\int\rho\,\Big(\d_t u\,+\,u\cdot\nabla u\Big)\cdot u\,\dd x\,=\,
\frac{\dd}{\dd t}\int\rho\,|u|^2\,\dd x\,=\,0\,,
\]
which implies, after an integration in time, the following estimate: for all $t\in[0,T[\,$, one has
\[ %\begin{equation} \label{est:momentum}
\left\|\sqrt{\rho(t)}u(t)\right\|_{L^2}^2\,=\,\left\|\sqrt{\rho_0}u_0\right\|_{L^2}^2\,.
\] %\end{equation}
Using \eqref{est:rho-inf}, we immediately deduce that
\begin{equation} \label{est:u-2}
%\forall\,t\geq0\,,\qquad\qquad
\sup_{t\in[0,T[\,}\left\|u(t)\right\|_{L^2}\,\lesssim\,\left\|u_0\right\|_{L^2}\,,
\end{equation}
where the (implicit) multiplicative constant depends on $\rho_*$ and $\rho^*$ only. 

From now on the (often implicit) constants appearing in the estimates may depend not only on $\rho_*$ and $\rho^*$,
but also on $K_0$, defined in \eqref{assump:weaker}, and on suitable functional norms of the initial data $\rho_0$ and $u_0$. For the sake of better readability,
we state this fact now and for all, and we will avoid to point it out at every time.

\subsection{A uniform bound for the gradient of the density} \label{ss:D-rho}
Recall that, here and throughout this work, we have set
\[
 X\,:=\,\nabla^\perp\rho\,,
\]
where $\nabla^\perp\,=\,\big(-\d_2,\d_1\big)$ and, more in general, the $\perp$ operation consists in the rotation of a $2$-D vector of angle $\pi/2$.

With the previous convention, straightforward computations show that
the vector field $X$ is transported (in the sense of $1$-forms) by the flow of $u$, namely it solves
\begin{equation} \label{eq:X}
 \d_tX\,+\,u\cdot\nabla X\,=\,\d_Xu\,.
\end{equation}
In particular, this equation implies that the following commutator relation holds true:
\begin{equation} \label{eq:comm}
\big[\d_X\,,\,\d_t\,+\,u\cdot\nabla\big]\,=\,0\,.
\end{equation}

From classical transport estimates in Lebesgue spaces applied to equation \eqref{eq:X}, we deduce the following bound for $\|X\|_{L^\infty}$,
whence for $\|\nabla\rho\|_{L^\infty}$: for any $t\geq0$, one has
\[ %\begin{equation} \label{est:D-rho}
\left\|\nabla\rho(t)\right\|_{L^\infty}\,\leq\,\left\|\nabla\rho_0\right\|_{L^\infty}\,+\,\int^t_0\left\|\d_Xu(\t)\right\|_{L^\infty}\,\dd\tau\,.
\] %\end{equation}
Therefore, in view of assumption \eqref{assump:weaker}, we infer that
\begin{equation} \label{est:unif_D-rho}
\sup_{t\in[0,T[\,}\left\|\nabla\rho(t)\right\|_{L^\infty}\,<\,+\,\infty.
\end{equation}

\subsection{The ``vorticity'' of the momentum: the new unknown $\eta$} \label{ss:eta}

In this subsection, we introduce a new unknown, which will play a key role in our proof.
More precisely, given $m\,:=\,\rho\,u$ the momentum of the fluid, we define the scalar field $\eta$ as %the ``vorticity'' of the momentum $m\,=\,\rho\,u$:
\[
 \eta\,:=\,\curl m\,=\,\d_1\big(m^2\big)\,-\,\d_2\big(m^1\big)\,.
\]
We observe (see the details in \cite{F-GB-S}, for instance) that $\eta$ solves the transport equation
\begin{equation} \label{eq:eta}
 \d_t\eta\,+\,u\cdot\nabla\eta\,=\,\d_Xu\cdot u\,,
\end{equation}
where we notice that the following equality holds true:
\[
 \d_Xu\cdot u \,=\,\big(\nabla^\perp\rho\cdot\nabla\big) u\cdot u\,=\,\frac{1}{2}\,\nabla^\perp\rho\cdot\nabla|u|^2\,=\,\frac{1}{2}\,\d_X|u|^2\,.
\]

Now, we can perform $L^q$ estimates for the transport equation \eqref{eq:eta}. For reasons which are going to be clear in a while,
we consider $q\in[p_0,+\infty]$ (in fact, $q=p_0$ and $q=+\infty$ would be enough). Recall that the index $p_0$
has been fixed in assumption \tbf{(A4)} and verifies $p_0\in\,]2,+\infty[\,$.
Thus, by transport estimates \eqref{est:transp-L} we infer that, for any $t\in[0,T[\,$, one has
\begin{equation} \label{est:eta-L_first}
\left\|\eta(t)\right\|_{L^q}\,\leq\,\left\|\eta_0\right\|_{L^q}\,+\,\int^t_0\left\|\d_X u(\t)\right\|_{L^\infty}\,\left\|u(\t)\right\|_{L^q}\,\dt\,,
\end{equation}
where we have defined $\eta_0\,:=\,\curl\big(\rho_0\,u_0\big)$.
Observe that, owing to the equality
\begin{equation} \label{eq:eta-vort}
 \eta\,=\,\rho\,\omega\,+\,u\cdot\nabla^\perp\rho\,,
\end{equation}
from assumptions \tbf{(A1)} and \tbf{(A3)} and from the condition $p_0>2$, we easily deduce that,
for any $q\in[p_0,+\infty]$, one has
\[
\left\|\eta_0\right\|_{L^q}\,\leq\,\rho^*\,\left\|\nabla u_0\right\|_{L^q}\,+\,\left\|u_0\right\|_{L^2\cap L^\infty}\,\left\|\nabla\rho_0\right\|_{L^\infty}
\,<\,+\infty\,.
\]
From now on, we are going to mainly work with $q = p_0$; in only one point of the proof, we will need to take also $q=+\infty$ in the previous estimates.

\subsection{Bounds on the Lebesgue norms of $u$ and $\eta$} \label{ss:u-Leb}

Recall that the index $p_0$ has been defined in assumption \tbf{(A4)}.
Taking inequality \eqref{est:eta-L_first} into account, where we fix the choice $q=p_0$, it is apparent that we need to find a bound for
the velocity field $u$ in the space $L^\infty_T(L^{p_0})$.
Owing to \eqref{est:u-2}, it is enough to bound its $L^\infty$ norm (both in time and space).
For this, we will use the new unknown $\eta$.

Notice that the momentum $m\,:=\,\rho\,u$ is \emph{not} divergence-free. Therefore, its Helmholtz decomposition writes as
\begin{equation} \label{eq:Helm-mom}
m\,=\,\rho\,u\,=\,-\,\nabla^\perp(-\Delta)^{-1}\eta\,-\,\nabla(-\Delta)^{-1}\big(u\cdot\nabla\rho\big)\,.
\end{equation}
Thanks to this relation and to the fact that $p_0>2=d$, for any $t\in[0,T[\,$ fixed (which we omit from the notation), we can bound
\begin{align*}
 \left\|m\right\|_{L^\infty}\,&\lesssim\,\left\|m\right\|_{W^{1,p_0}}\,=\,\left\|\rho\,u\right\|_{W^{1,p_0}} \\
 &\lesssim\,\left\|\rho\,u\right\|_{L^{p_0}}\,+\,\left\|\eta\right\|_{L^{p_0}}\,+\,\left\|u\cdot\nabla\rho\right\|_{L^{p_0}}\,,
%&\lesssim\,\left\|u\right\|_{L^q}\,+\,\left\|\eta\right\|_{L^q}\,+\,\left\|u\cdot\nabla\rho\right\|_{L^q}
\end{align*}
where we have also used the Calder\'on-Zygmund theory to bound the $L^{p_0}$ norm of $\nabla m$ by the sum of the $L^{p_0}$ norms of $\eta$ and $u\cdot\nabla \rho$.
At this point, the H\"older inequality and an interpolation argument yield
\begin{align*}
\left\|m\right\|_{L^\infty}\,&\lesssim\,\rho^*\,\left\|u\right\|_{L^{p_0}}\,+\,
\left\|\eta\right\|_{L^{p_0}}\,+\,\left\|u\right\|_{L^{p_0}}\,\left\|\nabla\rho\right\|_{L^\infty} \\
&\lesssim\,\left\|u\right\|^\theta_{L^2}\,\left\|u\right\|^{1-\theta}_{L^\infty}\,+\,\left\|\eta\right\|_{L^{p_0}}\,.
%\,+\,\left\|u\right\|^\theta_{L^2}\,\left\|u\right\|_{L^\infty}^{1-\theta}\,.
\end{align*}
Here, $\theta\,=\,2/p_0\,\in\;]0,1[\,$; in addition, in passing from the first inequality to the second one, we have made use of
the uniform bound established in \eqref{est:unif_D-rho}.

At this point,  we observe that, owing to \eqref{est:rho-inf}, we have
\[
\left\|u\right\|_{L^\infty}\,\lesssim\, \left\|m\right\|_{L^\infty}\,.
\]
Hence, from the above estimate, inequality \eqref{est:u-2} and an application of the Young inequality, we immediately infer that
\begin{equation} \label{est:u-inf_prelim}
\left\|u\right\|_{L^\infty}\,\lesssim\,1\,+\,\left\|\eta\right\|_{L^{p_0}}\,.
\end{equation}
In particular, the previous bound implies that
\[
 \left\|u\right\|_{L^{p_0}}\,\lesssim\,\left\|u\right\|_{L^2\cap L^\infty}\,\lesssim\,1\,+\,\left\|\eta\right\|_{L^{p_0}}\,.
\]

Now, we insert this last inequality into \eqref{est:eta-L_first}, where we take $q=p_0$ first and, then, $q=+\infty$. From an application of Gr\"onwall's lemma and
assumption \eqref{assump:weaker}, we deduce that
\begin{equation} \label{est:eta_Leb}
 \sup_{t\in[0,T[\,}\left\|\eta(t)\right\|_{L^{p_0}\cap L^\infty}\,<\,+\,\infty\,.
\end{equation}
Going back to \eqref{est:u-inf_prelim}, in turn we infer that
\begin{equation} \label{est:u-inf}
  \sup_{t\in[0,T[\,}\left\|u(t)\right\|_{L^\infty}\,<\,+\,\infty\,.
\end{equation}

We conclude this part by presenting a family of interpolation inequalities, which will be very useful later to bound higher order norms
of the velocity field. We start by observing that, in space dimension $d=2$, one has the continuous embedding
$L^2\hookrightarrow B^{-1}_{\infty,\infty}$; at the same time, one also has (in any space dimension) $W^{1,\infty}\hookrightarrow B^1_{\infty,\infty}$.
Therefore, for any functional space $\mc Z$ such that $B^1_{\infty,\infty}\hookrightarrow \mc Z \hookrightarrow B^{-1}_{\infty,\infty}$,
one can write, for a suitable $\theta\in\,]0,1[\,$, the interpolation inequality
\[
 \|u\|_{\mc Z}\,\lesssim\,\|u\|_{L^2}^\theta\,\|u\|_{W^{1,\infty}}^{1-\theta}\,.
\]
By use of the uniform bounds \eqref{est:u-2} and \eqref{est:u-inf} and an application of the Young inequality,
we then find that, %it is easy to obtain the following estimate, for a suitable $\theta\in\,]0,1[\,$:
for any time $t\in[0,T[\,$, the following inequality holds true:
\begin{align} \label{est:interp-u}
 \left\|u(t)\right\|_{\mc Z}\,&\leq\,C_\veps\,+\,\veps\,\left\|\nabla u(t)\right\|_{L^\infty}\,,
\end{align}
where $\veps>0$ can be chosen as small as needed, up to increase accordingly the value of the numerical constant $C_\veps$.
In the next computations, we will need to apply inequality \eqref{est:interp-u} with $\mc Z=B^0_{\infty,1}$ and $\mc Z=B^{\alpha}_{\infty,\infty}$,
for $\alpha\in\,]0,1[\,$ small.

\subsection{The subcritical case: proof of Theorem \ref{th:cont-subcrit}} \label{ss:subcrit-proof}

Thanks to the bounds established above, we can give the proof of Theorem \ref{th:cont-subcrit}, hence also of the first part of Theorem \ref{th:blow-up}.
Therefore, in this subsection we will assume to be in the subcritical case, meaning that the regularity index $s$ verifies $s>1+d/p$ in \eqref{cond:Lipschitz}.
In addition, we assume that condition \eqref{assump:weaker} holds true.

Our approach does not uses Proposition \ref{p:cont:pri} (which will be exploited in the critical case, instead). However, we will follow the main steps
of its proof, as given in \cite{Brav-F},
and show that one can bound uniformly on $[0,T[\,$ the norm of the solution, namely the quantity
\[
 E(t)\,:=\,\left\|\rho\right\|_{L^\infty}\,+\,\left\|\nabla\rho(t)\right\|_{B^{s-1}_{p,r}}\,+\,\left\|u(t)\right\|_{L^2\cap B^s_{p,r}}\,+\,
\left\|\nabla\Pi(t)\right\|_{L^2\cap B^s_{p,r}}\,,
\]
by taking only condition \eqref{assump:weaker} for granted. %More precisely,

%Hence, the goal is to prove that the norm of the solution, namely the quantity
%\[
% E(t)\,:=\,\left\|\rho\right\|_{L^\infty}\,+\,\left\|\nabla\rho(t)\right\|_{B^{s-1}_{p,r}}\,+\,\left\|u(t)\right\|_{L^2\cap B^s_{p,r}}\,+\,
%\left\|\nabla\Pi(t)\right\|_{L^2\cap B^s_{p,r}}\,,
%\]
%remains bounded on $[0,T[\,$.
We now claim that, instead of $E(t)$, it is enough to prove that the quantity
\[
  \mc N(t)\,:=\,\left\|\nabla\rho(t)\right\|_{B^{s-1}_{p,r}}\,+\,\left\|u(t)\right\|_{B^s_{p,r}}
\]
remains bounded on the time interval $[0,T[\,$, under condition \eqref{assump:weaker}. Indeed, on the one hand
the lower order norms of $\rho$ and $u$ are already known to be bounded, thanks to \eqref{est:rho-inf} and \eqref{est:u-2} above; on the other hand,
the pressure can be estimated in terms of the norms of $\rho$ and $u$, as it can be deduced from the next two statements.

The first result is a well-known interpolation inequality, allowing us to estimate the Lipschitz norm of $u$ by the $L^\infty$ norm of
the vorticity $\o$ times a logarithmic factor depending on the higher order norms. In our context, this yields the following result.
\begin{lemma} \label{l:Du-log-interp}
Let the assumptions of Theorem \ref{th:cont-subcrit} be in force. Then, there exists a constant $C>0$ such that, for any
fixed time $t\in[0,T[\,$, one has the inequality
\[
 \left\|\nabla u(t)\right\|_{L^\infty}\,\lesssim\,\log\Big(e\,+\,\mc N(t)\Big)\,.
\]
\end{lemma}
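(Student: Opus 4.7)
The plan is to reduce the claim to the classical logarithmic interpolation inequality of Proposition \ref{p:interpol} by first securing a uniform bound on $\|\nabla u(t)\|_{B^0_{\infty,\infty}}$. All the ingredients have in fact already been collected in the previous subsections: what we have to do is put them together via the Biot--Savart relation.

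First, I would use the algebraic identity \eqref{eq:eta-vort} to recover the vorticity from the new unknown $\eta$, writing
\[
\omega\,=\,\frac{1}{\rho}\,\Bigl(\eta\,-\,u\cdot\nabla^\perp\rho\Bigr).
\]
Combining the lower bound $\rho\geq\rho_*$ from \eqref{est:rho-inf}, the uniform control on $\nabla\rho$ from \eqref{est:unif_D-rho}, the bound on $\eta$ in $L^{p_0}\cap L^\infty$ from \eqref{est:eta_Leb}, and the one on $u$ in $L^\infty$ from \eqref{est:u-inf}, this immediately yields $\sup_{t\in[0,T[}\|\omega(t)\|_{L^\infty}<+\infty$.

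Next, I would transfer this information to $\nabla u$ in $B^0_{\infty,\infty}$ via the Biot--Savart law. Since $\div u=0$ in $\R^2$, the identity $\Delta u=\nabla^\perp\omega$ gives $\nabla u\,=\,-\nabla\nabla^\perp(-\Delta)^{-1}\omega$ modulo low frequencies. For each dyadic block $j\geq 0$, the operator $\Delta_j\nabla\nabla^\perp(-\Delta)^{-1}$ is the convolution by an $L^1$ kernel with norm independent of $j$ (its symbol is smooth and $0$-homogeneous away from the origin, and $\varphi_j$ cuts off a neighborhood of $0$), so $\|\Delta_j\nabla u\|_{L^\infty}\lesssim\|\omega\|_{L^\infty}$. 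The block $j=-1$ is handled by Bernstein: $\|\Delta_{-1}\nabla u\|_{L^\infty}\lesssim\|\Delta_{-1}u\|_{L^\infty}\lesssim\|u\|_{L^2}$, which is bounded by \eqref{est:u-2}. Taking the supremum over $j\geq-1$ gives the desired uniform bound
\[
C_1\,:=\,\sup_{t\in[0,T[}\left\|\nabla u(t)\right\|_{B^0_{\infty,\infty}}\,<\,+\infty.
\]

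In the third step, I would invoke Proposition \ref{p:interpol} applied to $f=\nabla u(t)$. Since we are in the subcritical case $s>1+2/p$, by Proposition \ref{p:log-emb} we can fix some $\veps\in\,]0,s-1-2/p[\,$ so that $B^{s-1}_{p,r}\hookrightarrow B^\veps_{\infty,\infty}$, which yields $\|\nabla u(t)\|_{B^\veps_{\infty,\infty}}\lesssim\|u(t)\|_{B^s_{p,r}}\lesssim\mc N(t)$. Combining Proposition \ref{p:interpol} with the continuous embedding $B^0_{\infty,1}\hookrightarrow L^\infty$ and the bound of step two, one obtains
\[
\left\|\nabla u(t)\right\|_{L^\infty}\,\lesssim\,\left\|\nabla u(t)\right\|_{B^0_{\infty,\infty}}\,\left(1\,+\,\log\!\left(1\,+\,\frac{\mc N(t)}{\|\nabla u(t)\|_{B^0_{\infty,\infty}}}\right)\right).
\]
The function $M\mapsto M\bigl(1+\log(1+N/M)\bigr)$ is monotone increasing on $]0,+\infty[\,$ for every $N>0$ (this is an elementary check on its derivative), so one can replace $\|\nabla u(t)\|_{B^0_{\infty,\infty}}$ by $C_1$ in the right-hand side. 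Absorbing $C_1$ into the implicit constant finally gives $\|\nabla u(t)\|_{L^\infty}\lesssim\log(e+\mc N(t))$, as claimed.

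The only point requiring a little care is the Biot--Savart step, where one has to distinguish the low and high frequency blocks in order to apply Riesz-type continuity results only where they are legitimate; the rest is a direct reshuffling of bounds already proved in this section.
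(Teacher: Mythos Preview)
Your proposal is correct and follows essentially the same route as the paper: bound $\|\omega\|_{L^\infty}$ via \eqref{eq:eta-vort} and the uniform estimates \eqref{est:unif_D-rho}, \eqref{est:eta_Leb}, \eqref{est:u-inf}, deduce a uniform bound on $\|\nabla u\|_{B^0_{\infty,\infty}}$ from Biot--Savart plus the $L^2$ energy, apply Proposition \ref{p:interpol} using the subcritical embedding $B^{s-1}_{p,r}\hookrightarrow B^\veps_{\infty,\infty}$, and conclude by the monotonicity of $z\mapsto z\log(e+c/z)$. The only difference is cosmetic: you spell out the low/high-frequency splitting in the Biot--Savart step, which the paper records as ``standard''.
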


\begin{proof}
 The proof is very simple. The starting point is the logarithmic interpolation inequality of Proposition \ref{p:interpol} applied componentwise
to $\nabla u$.

First of all, we notice that
\[
 \left\|\nabla u\right\|_{B^\veps_{\infty,\infty}}\,\lesssim\,\mc N
\]
for $\veps>0$ small enough: this is a consequence of the fact that in the subcritical case, namely
for $(s,p,r)$ satisfying the strict inequality in \eqref{cond:Lipschitz},
one has the chain of continuous embeddings $B^s_{p,r}\,\hookrightarrow\, B^{s-d/p}_{\infty,r}\,\hookrightarrow\,B^{s-d/p}_{\infty,\infty}$,
with $s-d/p>1$.

Secondly, it is standard to bound
\[
 \left\|\nabla u\right\|_{B^0_{\infty,\infty}}\,\lesssim\,\left\|u\right\|_{L^2}\,+\,\left\|\o\right\|_{L^\infty}\,,
\]
where $\o\,=\,\d_1u^2-\d_2u^1$ is the vorticity of the fluid. Now, from \eqref{eq:eta-vort} and the estimates in
\eqref{est:eta_Leb}, \eqref{est:unif_D-rho} and \eqref{est:u-inf}, it follows that
\[
 \sup_{t\in[0,T[\,}\left\|\o(t)\right\|_{L^\infty}\,<\,+\infty\,.
\]
Therefore, using also \eqref{est:u-2} and the fact that the function $\beta(z)\,=\,z\,\log\left(e+\frac{c}{z}\right)$ is increasing on $\,]0,+\infty[\,$,
we deduce the claimed bound.
%\[
% \sup_{t\in[0,T[\,}\left\|\nabla u\right\|_{B^0_{\infty,\infty}}\,\leq\,C\,,
%\]
%for a constant $C>0$ only depending on suitable norms of the initial datum.
This concludes the proof of the lemma.
\end{proof}

The second statement establishes convenient bounds for the pressure gradient in terms of $\mc N$ and the Lipschitz norm of the velocity field.

\begin{lemma} \label{l:press-est}
Let the assumptions of Theorem \ref{th:cont-subcrit} be in force. Then the following bound holds true, for a suitable (implicit) multiplicative
constant $C>0$: for any fixed time $t\in[0,T[\,$, one has
\[
\left\|\nabla\Pi(t)\right\|_{L^2\cap B^s_{p,r}}\,\lesssim\,1\,+\,\Big(1\,+\,\left\|\nabla u(t)\right\|_{L^\infty}\Big)\,\mc N(t)\,.
\]
In addition, one also has, for any $t\in[0,T[\,$, the inequality
\[
\left\|\nabla\Pi(t)\right\|_{L^\infty}\,+\,\left\|\Delta\Pi(t)\right\|_{L^{p_0}}\,\lesssim\,1\,+\,\left\|\nabla u(t)\right\|_{L^\infty}\,.
\]
\end{lemma}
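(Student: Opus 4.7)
The plan is to derive an elliptic equation for the pressure and then to estimate $\nabla\Pi$ and $\Delta\Pi$ separately in each required norm. Taking the divergence of the momentum equation divided by $\rho$, and using the incompressibility condition $\div u=\div\d_tu=0$, one obtains
\begin{equation*}
\div\Big(\tfrac{1}{\rho}\nabla\Pi\Big)\,=\,-\,\d_iu^j\,\d_ju^i\qquad \Longleftrightarrow \qquad
\Delta\Pi\,=\,\tfrac{\nabla\rho}{\rho}\cdot\nabla\Pi\,-\,\rho\,\d_iu^j\,\d_ju^i\,.
\end{equation*}
Before turning to the pressure itself, I would first observe that differentiating the Helmholtz decomposition \eqref{eq:Helm-mom} of $m=\rho\,u$, applying the Calder\'on--Zygmund theorem and combining the bound \eqref{est:eta_Leb} on $\eta$ in $L^{p_0}$ with \eqref{est:unif_D-rho} and \eqref{est:u-inf}, yields the uniform bound $\nabla u\in L^\infty_T(L^{p_0})$; this will be used repeatedly in what follows.

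For the $L^2$ bound, I would test the divergence-form equation against $\Pi$, integrate by parts twice and use $\div u=0$ to rewrite the source term as $-\int u^j\,\d_ju^i\,\d_i\Pi\dx$. H\"older's inequality with exponents $(q,p_0,2)$ satisfying $1/q+1/p_0=1/2$ (so that $q\in\,]2,+\infty[\,$), combined with the uniform bounds \eqref{est:u-2}, \eqref{est:u-inf} and the above $L^{p_0}$ control of $\nabla u$, would then give $\|\nabla\Pi(t)\|_{L^2}\lesssim 1$. For the $L^\infty$ and $L^{p_0}$ bounds, I would read off directly from the equation for $\Delta\Pi$ the inequality
\begin{equation*}
\|\Delta\Pi\|_{L^{p_0}}\,\lesssim\,\|\nabla\Pi\|_{L^{p_0}}\,+\,\|\nabla u\|_{L^\infty}\,,
\end{equation*}
then invoke the Calder\'on--Zygmund bound $\|\nabla^2\Pi\|_{L^{p_0}}\lesssim\|\Delta\Pi\|_{L^{p_0}}$ together with a two-dimensional Gagliardo--Nirenberg interpolation between $\|\nabla\Pi\|_{L^2}\lesssim 1$ and $\|\nabla^2\Pi\|_{L^{p_0}}$. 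A standard Young absorption would then yield $\|\Delta\Pi\|_{L^{p_0}}+\|\nabla\Pi\|_{L^\infty}\lesssim 1+\|\nabla u\|_{L^\infty}$, the second inequality of the lemma.

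The Besov estimate is the delicate step. The key ingredient I would rely on is a \emph{tame} elliptic estimate for the variable-coefficient operator $\div\big((1/\rho)\nabla\,\cdot\,\big)$, of the form
\begin{equation*}
\|\nabla\Pi\|_{B^s_{p,r}}\,\lesssim\,\|\tilde g\|_{B^{s-1}_{p,r}}\,+\,\|\nabla\rho\|_{B^{s-1}_{p,r}}\,\|\nabla\Pi\|_{L^\infty}\,+\,\|\nabla\Pi\|_{L^2}\,,
\end{equation*}
where $\tilde g$ stands for the source written in divergence form. This bound can be established by a paraproduct decomposition of $\div((1/\rho)\nabla\Pi)$ together with a composition estimate on $1/\rho$. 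The tameness is crucial: a non-tame version, with a product $\|\nabla\rho\|_{B^{s-1}_{p,r}}\,\|\tilde g\|_{B^{s-1}_{p,r}}$ on the right-hand side, would produce a quadratic factor $\mc N^2\,\|\nabla u\|_{L^\infty}$ that is incompatible both with the linear dependence on $\mc N$ required here and with the Osgood-type argument underlying Theorem \ref{th:cont-subcrit}. Since $s-1>0$, the tame product estimate of Proposition \ref{p:tame} gives $\|\tilde g\|_{B^{s-1}_{p,r}}\lesssim \|\nabla u\|_{L^\infty}\,\mc N(t)$; putting this together with the already established bounds $\|\nabla\Pi\|_{L^\infty}\lesssim 1+\|\nabla u\|_{L^\infty}$, $\|\nabla\rho\|_{B^{s-1}_{p,r}}\lesssim\mc N(t)$ and $\|\nabla\Pi\|_{L^2}\lesssim 1$ then yields the first inequality of the lemma. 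The main obstacle in the plan is precisely setting up this tame elliptic estimate with the correct placement of $\|\nabla\Pi\|_{L^\infty}$ (rather than higher-order Besov norms of $\nabla\Pi$) next to $\|\nabla\rho\|_{B^{s-1}_{p,r}}$.
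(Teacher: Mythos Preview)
Your overall strategy matches the paper's: derive the elliptic equation, get $\nabla u\in L^\infty_T(L^{p_0})$, then establish the $L^2$, $L^{p_0}$ and $L^\infty$ bounds on $\nabla\Pi$ and $\Delta\Pi$ by interpolation and Young absorption, and finally invoke a tame elliptic estimate for the $B^s_{p,r}$ norm. All of this is fine and essentially identical to what the paper does (the paper cites the tame elliptic estimate as Lemma~3.12 of \cite{Brav-F}, which in the paper's notation reads
\[
\|\nabla \Pi \|_{B^{s}_{p,r}}\, \lesssim\,\big( 1 +  \| \nabla \rho  \|_{L^\infty}^{\theta} \big)\,\|F\|_{L^2}\,+\,
 \| \nabla \rho  \|_{B^{s-1}_{p,r}}\,\|\nabla\Pi\|_{L^\infty}\,+\, \| \rho\,\div(F)\|_{B^{s-1}_{p,r}}\,,
\]
with $F=u\cdot\nabla u$).

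There is, however, a genuine gap in your treatment of the source term. The natural elliptic estimate carries the factor $\rho$ in front of $\div F$: the coefficient $1/\rho$ in $\div\big((1/\rho)\nabla\Pi\big)$ does not vanish, and after rewriting $-\Delta\Pi=\rho\,\div F-\nabla\log\rho\cdot\nabla\Pi$ the source contribution is $\|\rho\,\div F\|_{B^{s-1}_{p,r}}$, not $\|\div F\|_{B^{s-1}_{p,r}}$. If you now apply Proposition~\ref{p:tame} (or a Bony decomposition) directly to $\rho\,(\nabla u{:}\nabla u)$, the paraproduct $\mc T_{\nabla u:\nabla u}\rho$ forces a term
\[
\|\nabla u{:}\nabla u\|_{L^\infty}\,\|\nabla\rho\|_{B^{s-2}_{p,r}}\;\lesssim\;\|\nabla u\|_{L^\infty}^{2}\,\mc N(t)\,,
\]
which is \emph{quadratic} in $\|\nabla u\|_{L^\infty}$. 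This is precisely the obstruction you anticipated for the elliptic estimate itself, but it resurfaces at the level of the source, and it is fatal: fed into Proposition~\ref{p:N-bound} and Lemma~\ref{l:Du-log-interp} it would yield $\mc N'\lesssim\mc N\,\log^2(e+\mc N)$, for which Osgood fails.

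The paper resolves this with an algebraic rewriting that you are missing:
\[
\rho\,\nabla u{:}\nabla u\;=\;\nabla(\rho u){:}\nabla u\;-\;(u\cdot\nabla u)\cdot\nabla\rho\,.
\]
Each piece can then be estimated by tame product bounds with only a \emph{single} factor $\|\nabla u\|_{L^\infty}$ (paired with $\|\nabla(\rho u)\|_{B^{s-1}_{p,r}}$, $\|\nabla u\|_{B^{s-1}_{p,r}}$, $\|\nabla\rho\|_{B^{s-1}_{p,r}}$, etc.), giving $\|\rho\,\div F\|_{B^{s-1}_{p,r}}\lesssim(1+\|\nabla u\|_{L^\infty})\,\mc N$ as required. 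You should incorporate this step; without it the claimed bound $\|\tilde g\|_{B^{s-1}_{p,r}}\lesssim\|\nabla u\|_{L^\infty}\,\mc N(t)$ does not follow.
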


\begin{proof}
Essentially, we have to repeat the computations carried out in \cite{Brav-F} and notice, after performing suitable (small) changes,
that assumption \eqref{assump:weaker} is enough to get the sought inequalities.
For the sake of completeness, we present here most of the details.

We start by writing the elliptic equation satisfied by the pressure function:
\begin{equation} \label{eq:D-Pi}
 -\,\div\left(\frac{1}{\rho}\,\nabla\Pi\right)\,=\,\div(F)\,,\qquad\qquad \mbox{ with }\qquad %\div\left(u\cdot\nabla u\,+\,\frac{1}{\rho}\,u\right)\,.
F\,:=\,u\cdot\nabla u\,.
\end{equation}
It is obtained by dividing the momentum equation in \eqref{eq:dd-E} by $\rho$, which is possible owing to \eqref{est:rho-inf},
and then applying the divergence operator to the resulting expression.

Thanks to the Lax-Milgram theorem applied to \eqref{eq:D-Pi} (we refer to \tsl{e.g.} Lemma 2 in \cite{D1} for the details),
from \eqref{eq:D-Pi} one immediately infers that
\[ %\begin{equation} \label{est:Pi_L^2}
\frac{1}{\rho^*}\,\left\|\nabla\Pi\right\|_{L^2}\,\lesssim\,\left\|F\right\|_{L^2}\,\lesssim\,
\left\|u\right\|_{L^{q_0}}\,\left\|\nabla u\right\|_{L^{p_0}}\,, 
\] %\end{equation}
where $p_0$ is the exponent fixed in assumption \tbf{(A4)} and $q_0$ is defined by the relation $1/q_0 + 1/p_0 = 1/2$.
In particular, since $2<p_0< +\infty$, one has that $q_0\in\,]2,+\infty[\,$ as well; hence one can use estimates \eqref{est:u-2} and \eqref{est:u-inf} to
deduce that $\|u\|_{L^{q_0}}\lesssim C$. In addition, we apply Calder\'on-Zygmund theory and relation \eqref{eq:eta-vort} to get
\[
\left\|\nabla u\right\|_{L^{p_0}}\,\lesssim\,\left\|\o\right\|_{L^{p_0}}\,\lesssim\,\left\|\eta\right\|_{L^{p_0}}\,+\,\left\|u\cdot\nabla^\perp\rho\right\|_{L^{p_0}}\,
\lesssim\,\left\|\eta\right\|_{L^{p_0}}\,+\,\left\|u\right\|_{L^{p_0}}\,\left\|\nabla\rho\right\|_{L^\infty}\,.
\]
Thus, owing to the estimates established in \eqref{est:unif_D-rho} and \eqref{est:eta_Leb}, and using \eqref{est:u-2} and \eqref{est:u-inf} again,
we finally find that
\begin{align} \label{est:unif-Du_L^p}
 \sup_{t\in[0,T[\,}\left\|\nabla u(t)\right\|_{L^{p_0}}\,<\,+\,\infty\,,
\end{align}
which in turn implies that
\begin{equation} \label{est:Pi_L^2}
\sup_{t\in[0,T[\,}\left\|\nabla\Pi(t)\right\|_{L^2}\,\lesssim\,\sup_{t\in[0,T[\,}\left\|F(t)\right\|_{L^2}\,<\,+\infty\,.
\end{equation}

Next, we estimate the higher order regularity norm of $\nabla\Pi$, namely its Besov norm $B^s_{p,r}$. For this we use Lemma 3.12
of \cite{Brav-F}, which yields
 \begin{equation} \label{est:Pi-Besov-iniz}
\|\nabla \Pi \|_{B^{s}_{p,r}}\, \lesssim\,\left( 1 +  \| \nabla \rho  \|_{L^\infty}^{\theta} \right)\,\|F\|_{L^2}\,+\,
 \| \nabla \rho  \|_{B^{s-1}_{p,r}}\,\left\|\nabla\Pi\right\|_{L^\infty}\,+\, \left\| \rho\,  \div(F)  \right\|_{B^{s-1}_{p,r}}\,,
\end{equation}
for a suitable $\theta>1$. For the first term appearing in the right-hand side of the previous inequality, we can use the bounds established
in \eqref{est:unif_D-rho} and \eqref{est:Pi_L^2}. For the second term, instead, we use the interpolation inequality
\begin{equation} \label{est:Pi-interpol}
 \left\|\nabla\Pi\right\|_{L^\infty}\,\lesssim\,\left\|\nabla\Pi\right\|_{L^2}^\alpha\,\left\|\Delta\Pi\right\|_{L^{p_0}}^{1-\alpha}\,,
\end{equation}
for a suitable $\alpha\in\,]0,1[\,$, thanks to the fact that $p_0>d=2$. By developing the derivatives in \eqref{eq:D-Pi}, we find
an equation for $\Delta\Pi$:
\[
-\,\Delta\Pi\,=\,-\,\nabla\log\rho\cdot\nabla\Pi\,+\,\rho\,\nabla u:\nabla u\,. 
\]
From it, we easily deduce the following bound:
\begin{align*}
 \left\|\Delta\Pi\right\|_{L^{p_0}}\,&\lesssim\,\frac{1}{\rho_*}\,\left\|\nabla\rho\right\|_{L^{\infty}}\,\left\|\nabla\Pi\right\|_{L^{p_0}}\,+\,
 \rho^*\,\left\|\nabla u\right\|_{L^{p_0}}\,\left\|\nabla u\right\|_{L^\infty} \\
 &\lesssim\,\left\|\nabla\rho\right\|_{L^{\infty}}\,\left\|\nabla\Pi\right\|^{\alpha_1}_{L^2}\,\left\|\Delta\Pi\right\|^{1-\alpha_1}_{L^{p_0}}\,+\,
 \left\|\nabla u\right\|_{L^{p_0}}\,\left\|\nabla u\right\|_{L^\infty}\,,
\end{align*}
where, in passing from the first inequality to the second one, we have used an interpolation argument (for a suitable $\alpha_1\in\,]0,1[\,$) similar to
\eqref{est:Pi-interpol}. By a use of the Young inequality, together with \eqref{est:unif_D-rho}, \eqref{est:Pi_L^2} and \eqref{est:unif-Du_L^p},
we find, for any fixed time $t\in[0,T[\,$, the following estimate:
\begin{equation} \label{est:Pi-Lapl}
 \left\|\Delta\Pi(t)\right\|_{L^{p_0}}\,\lesssim\,1\,+\,\left\|\nabla u(t)\right\|_{L^\infty}\,.
\end{equation}
Plugging \eqref{est:Pi_L^2} and the just proven \eqref{est:Pi-Lapl} into \eqref{est:Pi-interpol}, in turn, implies an analogous bound also for the $L^\infty$
norm of the pressure gradient: for any $t\in[0,T[\,$, one has
\begin{equation} \label{est:Pi-inf}
 \left\|\nabla\Pi(t)\right\|_{L^\infty}\,\lesssim\,1\,+\,\left\|\nabla u(t)\right\|_{L^\infty}\,.
\end{equation}

Finally, we bound the $B^{s-1}_{p,r}$ norm of the term $\rho\,\div(F)$. For this, we use the trick of \cite{Brav-F} in order
to avoid the appearing of a bad $\left\|\nabla u\right\|_{L^\infty}^2$ term, and we write
\begin{align*}
\rho\,  \div(F)\,&=\,\rho\,\nabla u:\nabla u\,=\,\nabla\big(\rho\,u\big):\nabla u\,-\,\big(u\cdot\nabla u\big)\cdot\nabla\rho\,.
\end{align*}
By a repeated application of the tame estimates of Proposition \ref{p:tame} (recall that we are in the subcritical case), we then bound
\begin{align*}
\left\| \rho\,  \div(F)  \right\|_{B^{s-1}_{p,r}}\,&\lesssim\,\left\|\nabla u\right\|_{L^\infty}\,\left\|\nabla\big(\rho u\big)\right\|_{B^{s-1}_{p,r}}
\,+\,\left\|\nabla u\right\|_{B^{s-1}_{p,r}}\,\left\|\nabla\big(\rho u\big)\right\|_{L^\infty} \\
&\qquad\qquad \,+\,
\left\|u\cdot\nabla u\right\|_{B^{s-1}_{p,r}}\,\left\|\nabla\rho\right\|_{L^\infty}\,+\,
\left\|u\cdot\nabla u\right\|_{L^\infty}\,\left\|\nabla\rho\right\|_{B^{s-1}_{p,r}} \,.
\end{align*}
We now use the Leibniz rule and the Bony paraproduct decomposition to estimate
\begin{align*}
\left\|\nabla\big(\rho u\big)\right\|_{B^{s-1}_{p,r}}\,&\lesssim\,\left\|u\,\nabla\rho\right\|_{B^{s-1}_{p,r}}\,+\,\left\|\rho\,\nabla u\right\|_{B^{s-1}_{p,r}} \\
&\lesssim\,\left\|\nabla\rho\right\|_{B^{s-1}_{p,r}}\,\left\|u\right\|_{L^\infty}\,+\,\left\|u\right\|_{B^{s-1}_{p,r}}\,\left\|\nabla\rho\right\|_{L^\infty}\,+\,
\rho^*\,\left\|\nabla u\right\|_{B^{s-1}_{p,r}}\,, \\
%\mbox{ and }\qquad
\left\|u\cdot\nabla u\right\|_{B^{s-1}_{p,r}}\,&\lesssim\,
\|u\|_{B^{s-1}_{p,r}}\,\left\|\nabla u\right\|_{L^\infty}\,+\,\|u\|_{L^\infty}\,\left\|\nabla u\right\|_{B^{s-1}_{p,r}}\,.
\end{align*}
Thanks to the bounds established in \eqref{est:unif_D-rho} and \eqref{est:u-inf},
these inequalities yield the following estimate:
\begin{align*}
\left\| \rho\,  \div(F)  \right\|_{B^{s-1}_{p,r}}\,&\lesssim\,\left\|\nabla u\right\|_{L^\infty}\,\mc N\,+\,\mc N\,.
\end{align*}

In the end, inserting the previous bound and \eqref{est:Pi-inf} into \eqref{est:Pi-Besov-iniz}, we infer that
\[
 \|\nabla \Pi(t) \|_{B^{s}_{p,r}}\, \lesssim\,1\,+\,\left(1\,+\,\left\|\nabla u(t)\right\|_{L^\infty}\right)\,\mc N(t)
\]
for any $t\in[0,T[\,$, as claimed. The lemma is thus completely proven.
\end{proof}

In light of Lemmas \ref{l:Du-log-interp} and \ref{l:press-est}, it remains us to find a suitable bound for the quantity $\mc N(t)$.
We first need a preliminary estimate, which is the object of the next proposition.

\begin{prop} \label{p:N-bound}
Let the assumptions of Theorem \ref{th:cont-subcrit} be in force. Then, there exists an implicit constant $C>0$, depending on the fixed time $T$
and on the functional norms of the initial datum $\big(\rho_0,u_0\big)$, such that, for any fixed $t\in[0,T[\,$, one has the inequality
\[
 \mc N(t)\,\lesssim\,1\,+\,\int^t_0\Big(1\,+\,\left\|\nabla u(\t)\right\|_{L^\infty}\Big)\,\mc N(\t)\,\dd\t\,.
\]
%where we have defined 
%$\mc N(0)\,:=\,\left\|\nabla\rho_0\right\|_{B^{s-1}_{p,r}}\,+\,\left\|u_0\right\|_{B^s_{p,r}}$.
\end{prop}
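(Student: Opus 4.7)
The plan is to exploit the transport structure of \eqref{eq:dd-E} to derive integrated (pre-Gronwall) bounds for $\nabla\rho$ in $B^{s-1}_{p,r}$ and for $u$ in $B^s_{p,r}$, then sum them. The two main auxiliary ingredients are the tame estimate of Proposition \ref{p:tame} (legitimate since $s-1>2/p\geq 0$ in the subcritical regime) and the pressure control of Lemma \ref{l:press-est}, together with the already-established $L^\infty$ bounds \eqref{est:rho-inf} and \eqref{est:unif_D-rho}.

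For the density, differentiating the mass conservation equation componentwise shows that $\nabla\rho$ is transported by $u$ with source term $-(\nabla u)^T\nabla\rho$. Applying Theorem \ref{th:transport} in its pre-Gronwall integrated form at regularity $B^{s-1}_{p,r}$, and bounding the source by the tame estimate
\[
\|(\nabla u)^T\nabla\rho\|_{B^{s-1}_{p,r}} \,\lesssim\, \|\nabla u\|_{L^\infty}\,\|\nabla\rho\|_{B^{s-1}_{p,r}} \,+\, \|\nabla u\|_{B^{s-1}_{p,r}}\,\|\nabla\rho\|_{L^\infty}\,,
\]
together with $\|\nabla u\|_{B^{s-1}_{p,r}}\leq\|u\|_{B^s_{p,r}}\leq\mc N(\tau)$ and the uniform bound \eqref{est:unif_D-rho} on $\|\nabla\rho\|_{L^\infty}$, would yield $\|\nabla\rho(t)\|_{B^{s-1}_{p,r}} \lesssim 1 + \int_0^t (1+\|\nabla u(\tau)\|_{L^\infty})\,\mc N(\tau)\,\dd\tau$.

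For the velocity, I would rewrite the momentum equation as $\partial_t u + u\cdot\nabla u = -(1/\rho)\nabla\Pi$ and apply Theorem \ref{th:transport} in $B^s_{p,r}$ with transport field $v=u$. The nontrivial point is the control of $(1/\rho)\nabla\Pi$ in $B^s_{p,r}$: since $1/\rho$ itself does not belong to $B^s_{p,r}$, I would invoke the standard composition estimate (available because $\rho\in[\rho_*,\rho^*]$ by \eqref{est:rho-inf}), yielding $\|\nabla(1/\rho)\|_{B^{s-1}_{p,r}}\lesssim\|\nabla\rho\|_{B^{s-1}_{p,r}}$, and then apply the tame product estimate at the level of gradients to get
\[
\|(1/\rho)\nabla\Pi\|_{B^s_{p,r}} \,\lesssim\, \|\nabla\Pi\|_{B^s_{p,r}} \,+\, \|\nabla\Pi\|_{L^\infty}\,\|\nabla\rho\|_{B^{s-1}_{p,r}}\,.
\]
Plugging in the two estimates of Lemma \ref{l:press-est} would then give $\|(1/\rho)\nabla\Pi\|_{B^s_{p,r}}\lesssim 1+(1+\|\nabla u\|_{L^\infty})\mc N(\tau)$, whence $\|u(t)\|_{B^s_{p,r}} \lesssim 1 + \int_0^t (1+\|\nabla u(\tau)\|_{L^\infty})\,\mc N(\tau)\,\dd\tau$. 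Summing the two bounds will produce the claimed inequality.

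The most delicate point of the argument is this pressure control: it depends crucially on the \emph{linearity} in $\|\nabla u\|_{L^\infty}$ of the bound for $\|\nabla\Pi\|_{B^s_{p,r}}$ in Lemma \ref{l:press-est}, itself made possible by the algebraic rewriting $\rho(\nabla u:\nabla u)=\nabla(\rho u):\nabla u-(u\cdot\nabla u)\cdot\nabla\rho$. Any loss at this step would introduce a $\|\nabla u\|_{L^\infty}^2$ factor and would destroy the linear Gronwall structure on which the subsequent application of the Osgood lemma (at the end of the proof of Theorem \ref{th:cont-subcrit}) relies.
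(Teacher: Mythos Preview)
Your proposal is correct and follows essentially the same approach as the paper: transport estimates for $\nabla\rho$ (the paper works with $X=\nabla^\perp\rho$, which is equivalent) and for $u$ in the respective Besov spaces, combined with the tame estimates and the pressure bounds of Lemma~\ref{l:press-est}, then summing. The paper is slightly more terse---it refers to \cite{D1,D:F} for the ``standard argument'' and writes the product bound for $(1/\rho)\nabla\Pi$ directly---but your more explicit treatment via the composition estimate for $1/\rho$ and the extended tame estimate is exactly what underlies those references; your remark on the crucial linearity in $\|\nabla u\|_{L^\infty}$ of the pressure control is also well placed.
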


\begin{proof}
We borrow most of the arguments from \cite{Brav-F}, see the proof of Lemma 4.2 therein.

We first focus on the control of the Besov norm of the density.
Starting from the transport equation \eqref{eq:X} for $X=\nabla^\perp\rho$, a
standard argument (we refer \tsl{e.g.} to \cite{D1} or, in the case $p=+\infty$, to \cite{D:F}) yields, for any $t\in[0,T[\,$,
the bound
\begin{equation*}
	%\label{lin:est:nab:a}
\left\|\nabla \rho(t)\right\|_{B^{s-1}_{p,r}}\,\lesssim\,\left\|\nabla\rho_0\right\|_{B^{s-1}_{p,r}}\,+\,
\int^t_0\left(\|\nabla u \|_{L^{\infty}}\,\|\nabla \rho\|_{B^{s-1}_{p,r}}\, +\,\|\nabla \rho \|_{L^{\infty}}\, \|\nabla u\|_{B^{s-1}_{p,r}}\right)\,\dt\,,
\end{equation*}
which implies, thanks to \eqref{est:unif_D-rho}, the following inequality:
\[
\left\|\nabla \rho(t)\right\|_{B^{s-1}_{p,r}}\,\lesssim\,\left\|\nabla\rho_0\right\|_{B^{s-1}_{p,r}}\,+\,
\int^t_0\Big(1\,+\,\left\|\nabla u(\t)\right\|_{L^\infty}\Big)\,\mc N(\t)\,\dd\t\,.
\]

Next, we focus on the equation for $u$, which, thanks to the absence of vacuum regions set off by \eqref{est:rho-inf}, can be recasted in the following form:
\[ %\begin{equation} \label{eq:mom_2}
\d_tu\,+\,u\cdot\nabla u\,+\,\frac{1}{\rho}\,\nabla\Pi\,=\,0\,.
\] %\end{equation}
Then, similarly to what previously done for $\nabla\rho$, we can bound the norm of the velocity field, for any $t\in[0,T[\,$, as follows:
\begin{align*}
%	\label{lin:est:u}
\|u(t)\|_{B^{s}_{p,r}}\,&\lesssim\,\|u_0\|_{B^{s}_{p,r}}\,+\,\int^t_0\left(\|\nabla u \|_{L^{\infty}}\,\|u\|_{B^{s}_{p,r}}\,+\,
\frac{1}{\rho_*}\,\left\|\nabla\Pi\right\|_{B^s_{p,r}}\,+\,\|\nabla\Pi\|_{L^\infty}\,\left\|\nabla\rho\right\|_{B^{s-1}_{p,r}}\right)\,\dd\t\,.
\end{align*}
Taking advantage of the estimates established in Lemma \ref{l:press-est} above, we then find
\[
\|u(t)\|_{B^{s}_{p,r}}\,\lesssim\,\|u_0\|_{B^{s}_{p,r}}\,+\,\int^t_0\left(\|\nabla u(\t)\|_{L^{\infty}}\,\mc N(\t)\,+\,
1\,+\,\Big(1\,+\,\left\|\nabla u(\t)\right\|_{L^\infty}\Big)\,\mc N(\t)\right)\,\dd\t\,.
\]

Gathering those estimates together, we finally deduce the sought inequality.
\end{proof}

Thanks to the previous preliminary results, we can now complete the proof of Theorem \ref{th:cont-subcrit}, hence also of the first claim
given in Theorem \ref{th:blow-up}.

\begin{proof}[Proof of Theorem \ref{th:cont-subcrit}]
As already pointed out, the goal is to find a uniform bound for the quantity $\mc N$ on the time interval $[0,T[\,$.
For this, we plug the logarithmic inequality of Lemma \ref{l:Du-log-interp} into the bound provided by Proposition \ref{p:N-bound}: we find
\[
 \mc N(t)\,\leq\,C_1\,+\,C_2\int^t_0\mc N(\t)\,\log\Big(e\,+\,\mc N(\t)\Big)\,\dd\t
\]
for any $t\in[0,T[\,$, for two suitable constants $C_1>0$ and $C_2>0$, possibly depending on the time $T$ and on the functional norms of the initial density
and velocity field. By an application of the Osgood lemma (see \tsl{e.g.} Lemma 3.4 of \cite{BCD}), we find
\[
\log\log\Big(e\,+\,\mc N(t)\Big)\,\leq\,\log\log(e\,+\,C_1)\,+\,C_2\,t\,,
\]
which in turn implies that
\[
 \sup_{t\in[0,T[\,}\mc N(t)\,<\,+\,\infty\,.
\]
Thanks to this property and Lemma \ref{l:press-est}, we see that one also has
\[
 \sup_{t\in[0,T[\,}\left\|\nabla\Pi(t)\right\|_{L^2\cap B^s_{p,r}}\,<\,+\,\infty\,.
\]

In the end, we have proved that the quantity
\[
 E(t)\,:=\,\left\|\rho\right\|_{L^\infty}\,+\,\left\|\nabla\rho(t)\right\|_{B^{s-1}_{p,r}}\,+\,\left\|u(t)\right\|_{L^2\cap B^s_{p,r}}\,+\,
\left\|\nabla\Pi(t)\right\|_{L^2\cap B^s_{p,r}}\,,
\]
remains bounded on the time interval $[0,T[\,$. Thus, a standard argument applies to extend the solution $\big(\rho,u,\nabla\Pi\big)$ beyond the time $T$
while keeping the same regularity properties. This completes the proof of Theorem \ref{th:cont-subcrit}.
\end{proof}

We conclude this part with a remark concerning the regularity of the pressure.
\begin{rmk} \label{r:pressure}
By the momentum equation in \eqref{eq:dd-E}, the property $\d_X\rho=0$ and the commutator relation \eqref{eq:comm}, one deduces an equation
for $\d_Xu$: one has
\[
 \rho\,\d_t\d_Xu\,+\,\rho\,u\cdot\nabla\d_Xu\,=\,-\,\d_X\nabla\Pi\,.
\]
Owing to this and classical $L^\infty$ estimates for transport equations, it would be easy to get a continuation condition in terms of the geometric
regularity of the pressure gradient, at least in the subcritical case (the critical case looks more involved and goes beyond the scopes
of the present paper). For instance, condition \eqref{eq:cont_subcrit} can be replaced by
\[
 \int^T_0\left\|\d_{X(t)}\nabla\Pi(t)\right\|_{L^\infty}\,\dd t\,<\,+\,\infty\,.
\]
\end{rmk}

\section{The critical case: proof of Theorems \ref{th:cont-crit_sum} and \ref{th:cont-crit}} \label{s:proof}
Thanks to the uniform estimates established in Section \ref{s:prelim} and the material collected in Section \ref{s:tools}, we are in the position
of proving Theorems \ref{th:cont-crit_sum} and \ref{th:cont-crit}. This will yield also the second part of Theorem \ref{th:blow-up}, thus
completing its proof.

Since condition \eqref{eq:cont-cond} is stronger than \eqref{eq:cont-cond_sum}, we will limit ourselves to prove Theorem \ref{th:cont-crit}. Actually,
condition \eqref{eq:cont-cond_sum} will arise naturally in the course of our proof; so, at the right point of our argument,
we will explain how to get Theorem \ref{th:cont-crit_sum} directly (see Remark \ref{r:proof_sum} in this respect).

\medbreak
To begin with, we recall that, thanks to Proposition \ref{p:cont:pri}, under our assumptions the solution $\big(\rho,u,\nabla\Pi\big)$
to system \eqref{eq:dd-E} can be continued beyond a time $T>0$ as soon as %the $L^1_T(L^\infty)$ norm of $\nabla u$ remains bounded.
\begin{equation} \label{eq:cont-condit-Du}
 \int^T_0\left\|\nabla u(t)\right\|_{L^\infty}\,\dt\,<\,+\,\infty\,.
\end{equation}
Thus, the proof of Theorem \ref{th:cont-crit} (hence, the proof of the second part of Theorem \ref{th:blow-up}) reduces to the proof of the next proposition.

\begin{prop} \label{p:Du-bound}
Let the hypotheses of Theorem \ref{th:cont-crit} be in force. Assume that the time $T>0$ is such that condition \eqref{eq:cont-cond} holds true.

Then, also condition \eqref{eq:cont-condit-Du} holds true for the same time $T$.
%\[
% \int^T_0\left\|\nabla u(t)\right\|_{L^\infty}\,\dt\,<\,+\,\infty\,.
%\]
\end{prop}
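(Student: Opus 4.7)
The plan is to derive a pointwise bound of the schematic form $\|\nabla u(t)\|_{L^\infty}\lesssim 1+\|\eta(t)\|_{B^0_{\infty,1}}+\|X(t)\|_{B^0_{\infty,1}}$ and then close the estimate on the right-hand side by the improved transport estimate of Theorem \ref{th:B^0} applied to equations \eqref{eq:X} and \eqref{eq:eta}. First I would recover $\nabla u$ from the Helmholtz decomposition \eqref{eq:Helm-mom} of the momentum $m=\rho u$. Since $\nabla m$ is expressed through zeroth-order singular integrals of $\eta$ and of $u\cdot\nabla\rho$, which do act continuously on $B^0_{\infty,1}$ (unlike $L^\infty$), one gets, owing to the embedding \eqref{emb:lg-B}, the estimate
\[
\|\nabla u\|_{L^\infty}\;\lesssim\;\|\nabla m\|_{B^0_{\infty,1}}+\|m\|_{L^\infty}\|\nabla\rho\|_{L^\infty}\;\lesssim\;1+\|\eta\|_{B^0_{\infty,1}}+\|X\|_{B^0_{\infty,1}}\,,
\]
where the bilinear term $u\cdot\nabla\rho$ is handled by Proposition \ref{p:tame} and by the uniform bounds \eqref{est:u-inf} and \eqref{est:unif_D-rho} established in Section \ref{s:prelim}.

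Setting $U(t):=\int_0^t\|\nabla u(\tau)\|_{L^\infty}\,d\tau$, Theorem \ref{th:B^0} applied to \eqref{eq:X} and \eqref{eq:eta} yields
\begin{align*}
\|X\|_{L^\infty_t(B^0_{\infty,1})} &\;\lesssim\;\bigl(1+\|\partial_X u\|_{L^1_t(B^0_{\infty,1})}\bigr)\bigl(1+U(t)\bigr), \\
\|\eta\|_{L^\infty_t(B^0_{\infty,1})} &\;\lesssim\;\bigl(1+\|\partial_X u\cdot u\|_{L^1_t(B^0_{\infty,1})}\bigr)\bigl(1+U(t)\bigr).
\end{align*}
Under \eqref{eq:cont-cond}, the first norm in the right-hand sides is already finite. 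The decisive step is thus to control $\|\partial_X u\cdot u\|_{B^0_{\infty,1}}$ by $\|\partial_X u\|_{B^0_{\infty,1}}$ alone, since $B^0_{\infty,1}$ is not an algebra.

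I would decompose $\partial_X u\cdot u=\mathcal{T}_{\partial_X u}u+\mathcal{T}_u\partial_X u+\mathcal{R}(\partial_X u,u)$. Both paraproducts are bounded directly in $B^0_{\infty,1}$ by $\|u\|_{L^\infty}\|\partial_X u\|_{B^0_{\infty,1}}$ thanks to Theorem \ref{t:log-pp}. The remainder is the obstacle: classical remainder estimates only yield a bound in $B^\varepsilon_{\infty,1}$ for some $\varepsilon>0$, which is a loss of regularity we cannot afford in the critical setting. To bypass this, I would invoke the logarithmic remainder estimate of Theorem \ref{t:log-r}(ii), which, combined with Proposition \ref{p:log-interp} applied to $u$, gives, for some $\alpha>1$,
\[
\|\mathcal{R}(\partial_X u,u)\|_{B^{\alpha\log}_{\infty,\infty}}\;\lesssim\;\|\partial_X u\|_{B^0_{\infty,1}}\,\|u\|_{L^\infty}\,\bigl(1+\log(e+\|\nabla u\|_{L^\infty})\bigr)^{\alpha}\,,
\]
while trivially $\|\mathcal R(\partial_X u,u)\|_{B^0_{\infty,\infty}}\lesssim\|\partial_X u\|_{B^0_{\infty,1}}\|u\|_{L^\infty}$. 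An application of Lemma \ref{l:interp-log} interpolates these two bounds and recovers the $B^0_{\infty,1}$ norm without any loss of derivative, producing
\[
\|\partial_X u\cdot u\|_{B^0_{\infty,1}}\;\lesssim\;\|\partial_X u\|_{B^0_{\infty,1}}\,\bigl(1+\log(e+\|\nabla u\|_{L^\infty})\bigr).
\]
At this exact point the pointwise-in-time hypothesis \eqref{eq:cont-cond} becomes essential: it permits to pull the uniformly bounded quantity $\|\partial_X u(t)\|_{B^0_{\infty,1}}$ out of the time integral, leaving only the logarithmic factor in $\|\nabla u\|_{L^\infty}$ to handle.

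Putting all pieces together produces an Osgood-type inequality,
\[
\|\nabla u(t)\|_{L^\infty}\;\lesssim\;1+\bigl(1+U(t)\bigr)\int_0^t \bigl(1+\log(e+\|\nabla u(\tau)\|_{L^\infty})\bigr)\,d\tau\,,
\]
from which the Osgood lemma, used exactly as at the end of Section \ref{s:prelim}, yields $U(T)<+\infty$ and hence \eqref{eq:cont-condit-Du}. The main obstacle is indeed the control of $\mathcal{R}(\partial_X u,u)$ in $B^0_{\infty,1}$, and the refined logarithmic interpolations of Lemma \ref{l:interp-log} and Proposition \ref{p:log-interp} are tailored precisely to bypass the usual loss of derivatives. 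For Theorem \ref{th:cont-crit_sum} the analysis is strictly simpler, since the quantity $\partial_X u\cdot u=\tfrac12\partial_X|u|^2$ is already controlled in $L^1_T(B^0_{\infty,1})$ directly by hypothesis \eqref{eq:cont-cond_sum}, so the logarithmic interpolation machinery above can be dispensed with.
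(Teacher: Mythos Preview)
Your proposal follows essentially the same strategy as the paper: Helmholtz decomposition of $m=\rho u$ to reduce $\|\nabla u\|_{L^\infty}$ to $B^0_{\infty,1}$ norms of $\eta$ and $X$, the improved transport estimate of Theorem \ref{th:B^0}, a Bony decomposition of $\partial_X u\cdot u$, and then the logarithmic interpolation machinery (Lemma \ref{l:interp-log} together with Proposition \ref{p:log-interp}) to handle the remainder without loss of derivatives. One small slip: you invoke Proposition \ref{p:tame} to bound $\|u\cdot\nabla\rho\|_{B^0_{\infty,1}}$, but that proposition requires $s>0$; the paper instead uses the Bony decomposition together with the divergence-free condition on $u$ to write the remainder as $\sum_k\partial_k\mathcal R(\rho,u^k)$ and estimate it in $B^1_{\infty,1}$, which is the standard fix at this regularity.
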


The rest of this section is devoted to the proof of Proposition \ref{p:Du-bound}. It relies on another use of formula \eqref{eq:Helm-mom}, which allows us to
find a convenient expression for $\nabla u$.

\subsection{Reduction to estimating singular integrals} \label{ss:reduction}

As a first step, we use the definition of the momentum $m$ to write, for any $j\in\{1,2\}$, that $\d_j u\,=\,\d_j\big(m/\rho\big)$.
By expanding the derivatives on the right-hand side\footnote{Here and throughout this work, we use the convention that
$Du$ denotes the Jacobian matrix of $u$ and $\nabla u =\,^tDu$ its transpose matrix.}, we get the formula
\begin{align}
\label{eq:Du}
\nabla u\,&=\,-\,\frac{1}{\rho^2}\,\nabla\rho\otimes m %\left(\nabla^\perp(-\Delta)^{-1}\eta\,+\,\nabla(-\Delta)^{-1}\big(u\cdot\nabla\rho\big)\right) \\
%\nonumber
%&\qquad\qquad\qquad
\,+\,\frac{1}{\rho}\,%\left(\nabla\nabla^\perp(-\Delta)^{-1}\eta\,+\,\nabla^2(-\Delta)^{-1}\big(u\cdot\nabla\rho\big)\right)\,.
\nabla m\,.
\end{align}

Thanks to the uniform bounds \eqref{est:rho-inf}, \eqref{est:unif_D-rho} and \eqref{est:u-inf}, one can easily control the $L^\infty$ norm
of the first term in the right-hand side of the previous relation: %for any $t\in[0,T[\,$, we have
\begin{align}
\label{est:Du-1}
\sup_{t\in[0,T[\,}\left\|\frac{1}{\rho^2(t)}\,\nabla\rho(t)\otimes m(t)\right\|_{L^\infty}\,<\,+\,\infty\,.
\end{align}

On the other hand, for the second term appearing in \eqref{eq:Du}, using \eqref{eq:Helm-mom} we can write
\[
\left\|\frac{1}{\rho}\,\nabla m\right\|_{L^\infty}\,\lesssim\,\left\|\nabla m\right\|_{L^\infty}\,\lesssim\,S_1\,+\,S_2\,,
%\left\|\nabla\nabla^\perp(-\Delta)^{-1}\eta\right\|_{L^\infty}\,+\,\left\|\nabla^2(-\Delta)^{-1}\big(u\cdot\nabla\rho\big)\right\|_{L^\infty}\,.
\]
where we have defined
\[
 S_1\,:=\,\left\|\nabla\nabla^\perp(-\Delta)^{-1}\eta\right\|_{L^\infty}\qquad \mbox{ and }\qquad
 S_2\,:=\,\left\|\nabla^2(-\Delta)^{-1}\big(u\cdot\nabla\rho\big)\right\|_{L^\infty}\,.
\]
The difficulty then comes from the fact that we need to bound in $L^\infty$ two singular integral operators applied to $L^\infty$ functions, but,
as is well-known, the singular integral operators are not bounded operators over $L^\infty$.

How to control the $L^\infty$ norm of singular integral operators analogous to the ones under study is by now quite well-understood. There are essentially
two possibilities.
The first one is to increase the regularity and work in the class of H\"older spaces; however, we want to avoid this approach here, as we aim to work
with minimal regularity assumptions.
Another possibility, somehow minimal in terms of smoothness, is inspired by the pioneering works \cite{Ch_1991, Ch_1993}
by Chemin and consists in working with \emph{striated} regularity conditions.
However, as explained in Remark \ref{r:striated}, this strategy would also lead to formulate additional hypotheses, not needed here.

After this discussion, we resume with our argument: in the next subsection, we are going to bound $S_2$ and $S_1$ respectively, by imposing
minimal regularity requirements. This naturally makes the Besov regularity class $B^0_{\infty,1}$ appear in our study.
Specifically, we are led to bound the norm of some suitable quantities in that space: we will deal with this issue in Subsection \ref{ss:B^0}.
Finally, in Subsection \ref{ss:end} we will put all the estimates together and prove Proposition \ref{p:Du-bound}.

\subsection{Bounds for the singular integrals} \label{ss:bound-S}
In this subsection, we show how to estimate the two singular integral terms $S_1$ and $S_2$ by making minimal regularity requirements appear.

Let us first deal with the term
\[
  S_2\,:=\,\left\|\nabla^2(-\Delta)^{-1}\big(u\cdot\nabla\rho\big)\right\|_{L^\infty}\,.
\]

We notice that the singularity of the singular integral operator disappears when considered at high frequencies, whereas, for low frequencies, one disposes
of the Bernstein inequalities to reduce the $L^\infty$ bound to a $L^p$ bound, for $p<+\infty$. Therefore, we write the decomposition
\[
 \nabla^2(-\Delta)^{-1}\big(u\cdot\nabla\rho\big)\,=\,\Delta_{-1}\nabla^2(-\Delta)^{-1}\big(u\cdot\nabla\rho\big)\,+\,
\big(\Id-\Delta_{-1}\big)\nabla^2(-\Delta)^{-1}\big(u\cdot\nabla\rho\big)\,.
\]
Using that $u\cdot\nabla\rho = \div\big(\rho\,u\big) = \div m$ and the first Bernstein inequality, we then get that
\begin{align*}
 S_2\,&\lesssim\,\left\|\Delta_{-1}\nabla^2(-\Delta)^{-1}\div m\right\|_{L^2}\,+\,
\left\|\big(\Id-\Delta_{-1}\big)\nabla^2(-\Delta)^{-1}\big(u\cdot\nabla\rho\big)\right\|_{L^\infty} \\
&\lesssim\,\left\|m\right\|_{L^2}\,+\,\left\|\big(\Id-\Delta_{-1}\big)\nabla^2(-\Delta)^{-1}\big(u\cdot\nabla\rho\big)\right\|_{L^\infty} \\
&\lesssim\,1\,+\,\left\|\big(\Id-\Delta_{-1}\big)\nabla^2(-\Delta)^{-1}\big(u\cdot\nabla\rho\big)\right\|_{L^\infty}\,,
\end{align*}
where, in passing from the first to the second line, we have made use of Plancherel's theorem,
while the passage from the second to the third line relies on the bounds \eqref{est:rho-inf} and \eqref{est:u-2}.

Now that we have cut the low frequencies out, the problem consists in bounding the second term on the right-hand side of the last inequality. The natural way to
do that is to make the stronger $B^0_{\infty,1}$ norm come into play: as, for any $j\geq0$, the $0$-th order operator
$\Delta_j\nabla^2(-\Delta)^{-1}$ is smooth and supported on an annulus, Lemma 2.2 of \cite{BCD} yields
\begin{align*}
 \left\|\big(\Id-\Delta_{-1}\big)\nabla^2(-\Delta)^{-1}\big(u\cdot\nabla\rho\big)\right\|_{L^\infty}
&\lesssim\,\sum_{j\geq0}\left\|\Delta_j\nabla^2(-\Delta)^{-1}\big(u\cdot\nabla\rho\big)\right\|_{L^\infty} \\
&\lesssim\,\sum_{j\geq0}\left\|\Delta_j\big(u\cdot\nabla\rho\big)\right\|_{L^\infty} \\
&\lesssim\,\left\|u\cdot\nabla\rho\right\|_{B^0_{\infty,1}}\,.
\end{align*}
In this way, we have avoided the singularity of the operator $\nabla^2(-\Delta)^{-1}$, but we have now the problem of estimating the higher order
$B^0_{\infty,1}$ norm. In addition, notice that we have to estimate a product in that space, which however is \emph{not} an algebra.

However, thanks to the divergence-free property of $u$, we can follow \cite{D:F} (see also \cite{Brav-F}) and write the following Bony paraproduct
decomposition of $u\cdot\nabla\rho$:
\begin{equation} \label{decomp:paraprod}
 u\cdot\nabla\rho\,=\,\mc T_u\cdot\nabla\rho\,+\,\mc T_{\nabla\rho}\cdot u \,+\,\sum_{k=1,2}\d_k\mc R(\rho,u^k)\,.
\end{equation}
From the paraproduct estimates of Proposition \ref{p:op}, we then obtain the following bounds:
\begin{align*}
\left\|u\cdot\nabla\rho\right\|_{B^0_{\infty,1}}\,&\lesssim\,\left\|\nabla\rho\right\|_{B^0_{\infty,1}}\,\|u\|_{L^\infty}\,+\,
\left\|u\right\|_{B^0_{\infty,1}}\,\left\|\nabla\rho\right\|_{L^\infty}\,+\,\left\|\mc R(\rho,u)\right\|_{B^1_{\infty,1}} \\
&\lesssim\,\left\|\nabla\rho\right\|_{B^0_{\infty,1}}\,\|u\|_{L^\infty}\,+\,
\left\|u\right\|_{B^0_{\infty,1}}\,\left\|\nabla\rho\right\|_{L^\infty}\,+\,\left\|u\right\|_{B^0_{\infty,1}}\,\left\|\rho\right\|_{B^1_{\infty,\infty}}\,.
\end{align*}
Making use of the embedding $W^{1,\infty}\hookrightarrow B^1_{\infty,\infty}$ and
of the uniform estimates \eqref{est:rho-inf}, \eqref{est:unif_D-rho} and \eqref{est:u-inf}, in turn we get
\begin{align*}
\left\|u\cdot\nabla\rho\right\|_{B^0_{\infty,1}}\,&\lesssim\,\left\|\nabla\rho\right\|_{B^0_{\infty,1}}\,+\,\left\|u\right\|_{B^0_{\infty,1}}\,.
\end{align*}

In light of the previous computations, in the end we get the following bound for $S_2$:
\begin{equation} \label{est:S_2}
S_2\,\lesssim\,1\,+\,\left\|\nabla\rho\right\|_{B^0_{\infty,1}}\,+\,\left\|u\right\|_{B^0_{\infty,1}}\,.
\end{equation}

\medbreak
%\subsection{Estimate of $S_1$} \label{ss:S_1}
We now turn our attention to the estimate of the term $S_1$, defined as
\[
 S_1\,:=\,\left\|\nabla\nabla^\perp(-\Delta)^{-1}\eta\right\|_{L^\infty}\,.
\]
Repeating \tsl{mutatis mutandis} the argument employed at the beginning of this subsection, based on cutting low and high frequencies
and using Bernstein's inequalities, we get
\begin{align}
  \label{est:S_1}
S_1\,&\lesssim\,\left\|\Delta_{-1}\nabla^2(-\Delta)^{-1}\curl m\right\|_{L^\infty}\,+\,
\left\|\big(\Id-\Delta_{-1}\big)\nabla^2(-\Delta)^{-1}\eta\right\|_{L^\infty} \\
\nonumber
&\lesssim\,\left\|m\right\|_{L^2}\,+\,\left\|\big(\Id-\Delta_{-1}\big)\nabla^2(-\Delta)^{-1}\eta\right\|_{L^\infty} \\
\nonumber
&\lesssim\,1\,+\,\left\|\eta\right\|_{B^{0}_{\infty,1}}\,.
\end{align}

\subsection{Estimating the Besov norms of regularity index $0$} \label{ss:B^0}
Gathering inequalities \eqref{est:Du-1}, \eqref{est:S_2} and \eqref{est:S_1} together, from relation \eqref{eq:Du} we infer that,
for any given time $t\in[0,T[\,$, one has
\begin{equation*}
\left\|\nabla u(t)\right\|_{L^\infty}\,\lesssim\,1\,+\,\left\|\nabla\rho(t)\right\|_{B^0_{\infty,1}}\,+\,\left\|u(t)\right\|_{B^0_{\infty,1}}
\,+\,\left\|\eta(t)\right\|_{B^{0}_{\infty,1}}\,.
\end{equation*}
%Now, since $L^2\hookrightarrow B^{-1}_{\infty,\infty}$ in space dimension $d=2$, by a standard interpolation argument and the use 
%of \eqref{est:u-2} and \eqref{est:u-inf} it is easy to obtain the following estimate, for a suitable $\theta\in\,]0,1[\,$:
%\begin{align*}
% \left\|u\right\|_{B^0_{\infty,1}}\,&\lesssim\,\left\|u\right\|^\theta_{B^{-1}_{\infty,\infty}}\,\left\|u\right\|^{1-\theta}_{B^{1}_{\infty,\infty}}\,
%\lesssim\,\left\|u\right\|^\theta_{L^2}\,\left\|u\right\|^{1-\theta}_{W^{1,\infty}}\,
%\lesssim\,\left(1\,+\,\left\|\nabla u\right\|_{L^\infty}\right)^{1-\theta}\,.
%\end{align*}
%After an application of the Young inequality,
Applying the interpolation inequality \eqref{est:interp-u} with $X=B^0_{\infty,1}$, we then get, for any time $t\in[0,T[\,$ fixed, the bound
\begin{equation} \label{est:Du_with_B}
\left\|\nabla u(t)\right\|_{L^\infty}\,\lesssim\,1\,+\,\left\|\nabla\rho(t)\right\|_{B^0_{\infty,1}}\,+\,\left\|\eta(t)\right\|_{B^{0}_{\infty,1}}\,. 
\end{equation}
The next goal is to bound the $B^0_{\infty,1}$ norms appearing on the right-hand side of the previous inequality.
To do that, we will make use of Theorem \ref{th:B^0}.

Before going on, in order to simplfy our argument, we introduce the following convenient notation: for $t\in[0,T[\,$, we set
\begin{equation} \label{def:U(t)}
 U(t)\,:=\,1\,+\,\int^t_0\left\|\nabla u(\t)\right\|_{L^\infty}\,\dd\t\,.
\end{equation}

%\paragraph*{The norm of $\nabla\rho$.}
\paragraph*{Use of the improved transport estimates.}
Consider the term $\nabla\rho$ first and observe that, in order to bound it, it is enough to control the $B^0_{\infty,1}$ of the vector
field $X=\nabla^\perp\rho$. Thus, applying the improved transport estimates of Theorem \ref{th:B^0} to equation \eqref{eq:X}, we find
\begin{equation} \label{est:Drho-B^0}
\left\|\nabla\rho(t)\right\|_{B^0_{\infty,1}}\,\lesssim\,U(t)\,\left(\left\|\nabla\rho_0\right\|_{B^0_{\infty,1}}\,+\,
\int^t_0\left\|\d_Xu(\t)\right\|_{B^{0}_{\infty,1}}\,\dd\t\right)
\end{equation}
for all time $t\in[0,T[\,$. Analogously, we also get, for any $t\in[0,T[\,$, the estimate
\begin{equation} \label{est:eta_Besov}
\left\|\eta(t)\right\|_{B^0_{\infty,1}}\,\lesssim\,U(t)\,\left(\left\|\eta_0\right\|_{B^0_{\infty,1}}\,+\,
\int^t_0\left\|\d_Xu(\t)\cdot u(\t)\right\|_{B^{0}_{\infty,1}}\,\dd\t\right)\,.
\end{equation}

%Notice that, with this inequality at hand, in order to close our argument it would be enough to strengthen assumption \eqref{assump:weaker}
%with the integral condition
%\[
% \int^T_0\left\|\d_Xu(t)\right\|_{B^{0}_{\infty,1}}\,\dt\,<\,+\,\infty\,,
%\]
%which is still weaker than the pointwise one \eqref{eq:cont-cond}. As we are going to see below, the necessity for assumption
%\eqref{eq:cont-cond} comes from the control of the Besov norm of $\eta$.

%\paragraph*{The norm of $\eta$.}
%Only the term presenting the $B^0_{\infty,1}$ norm of $\eta$ is left: let us bound it here. By a direct application of Theorem \ref{th:B^0}
%to equation \eqref{eq:eta}, we get, for any $t\in[0,T[\,$, the estimate
%\begin{equation} \label{est:eta_Besov}
%\left\|\eta(t)\right\|_{B^0_{\infty,1}}\,\lesssim\,U(t)\,\left(\left\|\eta_0\right\|_{B^0_{\infty,1}}\,+\,
%\int^t_0\left\|\d_Xu(\t)\cdot u(\t)\right\|_{B^{0}_{\infty,1}}\,\dd\t\right)\,.
%\end{equation}

At this point, we observe that, thanks to the above estimate, one is already in the position of proving Theorem \ref{th:cont-crit_sum}.
Let us explain this in the next remark.
\begin{rmk} \label{r:proof_sum}
We can insert the bounds given in \eqref{est:Drho-B^0} and \eqref{est:eta_Besov} into estimate \eqref{est:Du_with_B}.
Observing that $\d_Xu\cdot u\,=\,\d_X|u|^2/2$, we then use assumption \eqref{eq:cont-cond_sum} to get the following differential inequality:
\[
 U'(t)\,\leq\,C_1\,+\,C_2\,U(t)\,,
\]
for two suitable positive constants $C_1$ and $C_2$. With this inequality at hand, it is easy to deduce, by the use of a Gr\"onwall argument, that
$U(t)\,\leq\,C$ for all $t\in[0,T[\,$. This leads to the same conclusion of Proposition \ref{p:Du-bound} under condition \eqref{eq:cont-cond_sum},
hence it also yields the proof of Theorem \ref{th:cont-crit_sum}.
\end{rmk}

This having been pointed out, let us resume with the proof of Proposition \ref{p:Du-bound} under assumption \eqref{eq:cont-cond}.
All the difficulty is hidden in the control of the norm of the non-linear term $\d_Xu\cdot u$ inside the integral:
we want to obtain such control by requiring minimal regularity
assumptions. Indeed, on the one hand the space $B^0_{\infty,1}$ is not an algebra and, on the other hand, when estimating that term,
we must avoid the appearing of any power of the function $U'(t)\,=\,\|\nabla u(t)\|_{L^\infty}$, which would make
our result impossible to prove.

%We point out that we are not able to take advantage of the relation $\d_Xu\cdot u = \d_X|u|^2/2$.

\paragraph*{Bouns for the non-linear term $\d_Xu\cdot u$.}
In order to treat the non-linear term appearing in \eqref{est:eta_Besov}, we proceed by writing its Bony paraproduct decomposition:
\[
 \d_Xu\cdot u\,=\,\mc T_{\d_Xu}\cdot u\,+\,\mc T_u\cdot\d_Xu\,+\,\mc R(\d_Xu,u)\,.
\]
From the classical paraproduct estimates of Proposition \ref{p:op}, we see that
\[
  \left\|\mc T_{\d_Xu}\cdot u\right\|_{B^0_{\infty,1}}\,\lesssim\,\left\|\d_Xu\right\|_{L^\infty}\,\left\|u\right\|_{B^0_{\infty,1}}\qquad \mbox{ and }\qquad
\left\|\mc T_{u}\cdot \d_Xu\right\|_{B^0_{\infty,1}}\,\lesssim\,\left\|u\right\|_{L^\infty}\,\left\|\d_Xu\right\|_{B^0_{\infty,1}}\,.
\]
Now, we can control the %$L^\infty$ norm of $u$ by estimate \eqref{est:u-inf} and its
$B^0_{\infty,1}$ norm of $u$ by Proposition \ref{p:interpol} and a subsequent
application of inequality \eqref{est:interp-u}. We then find the following estimates:
\begin{align}
 \label{est:T-1}
 \left\|\mc T_{\d_Xu}\cdot u\right\|_{B^0_{\infty,1}}\,&\lesssim\,\left\|\d_Xu\right\|_{L^\infty}\,\left\|u\right\|_{B^0_{\infty,\infty}}\,
\left(1\,+\,\log\left(1\,+\,\frac{\left\|\nabla u\right\|_{L^\infty}}{\|u\|_{B^0_{\infty,\infty}}}\right)\right)\,, \\
\label{est:T-2}
\left\|\mc T_{u}\cdot \d_Xu\right\|_{B^0_{\infty,1}}\,&\lesssim\,\left\|\d_Xu\right\|_{B^0_{\infty,1}}\,\left\|u\right\|_{L^\infty}\,.
\end{align}
All the difficulty of working with critical regularity norms appears when dealing with the remainder term. For estimating it,
we cannot rely on Proposition \ref{p:op}, which would yield a bound on the weaker $B^0_{\infty,\infty}$ norm only.
Several strategies would be possible, but they would lead to worse results. Let us spend some time to discuss them in the next remark.

\begin{rmk} \label{r:power}
In order to bound $\mc R(\d_Xu,u)$, the first attemp could consist in working in a positive regularity framework and estimating it, for any $\veps>0$ small,
in the following way,
 \[
\left\|\mc R(\d_Xu,u)\right\|_{B^0_{\infty,1}}\,\lesssim\,\left\|\mc R(\d_Xu,u)\right\|_{B^{\veps}_{\infty,\infty}}\,\lesssim\,
\left\|\d_Xu\right\|_{L^\infty}\,\left\|u\right\|_{B^{\veps}_{\infty,1}}\,,
\]
and, then, use the interpolation inequality \eqref{est:interp-u}. However, this would have dramatic consequences:
indeed, this quantity is already multiplied by a factor $U(t)$, keep in mind \eqref{est:eta_Besov}, therefore it would cause
the presence of a factor $U^2(t)$ in the final estimate. Such a quadratic growth, of course, would completely
destroy the argument.

For the same reason, the equality $\d_Xu\cdot u = \frac{1}{2}\,\d_X|u|^2$ is out of use here. 
First of all, it does not allow to ``factorise'' one derivative outside (as it was the case in \eqref{decomp:paraprod} above, for instance),
as the above equality needs symmetry between the two factors, which is broken when writing $\mc R(\d_Xu,u)$. %(one would rather need $\d_X\mc R(u,u)$).
Secondly, even using that equality (for instance, before the paraproduct decomposition), one would find a dangerous factor $\|u\|_{B^1_{\infty,1}}^2$,
which, for precisely the same reason as above, cannot be admitted in the estimates.

Finally, directly using Proposition \ref{p:interpol} on the remainder term $\mc R(\d_Xu,u)$ would lead to the presence of a bothering
factor $\left\|\d_Xu\right\|_{B^0_{\infty,1}}$ in the denominator of the logarithm, a factor that we want to avoid here.
\end{rmk}

The estimate of $\mc R(\d_Xu,u)$ in $B^0_{\infty,1}$ will rely, instead, on the use of Lemma \ref{l:interp-log} and Proposition \ref{p:log-interp}.
More precisely, we first employ Lemma \ref{l:interp-log} with some $\alpha>1$, which can be chosen arbitrarily close to $1$, to bound
\begin{equation} \label{est:Rem_provvis}
 \left\|\mc R(\d_Xu,u)\right\|_{B^0_{\infty,1}}\,\lesssim\,\left\|\mc R(\d_Xu,u)\right\|_{B^{0}_{\infty,\infty}}^{1-1/\alpha}
\left\|\mc R(\d_Xu,u)\right\|_{B^{\alpha\log}_{\infty,\infty}}^{1/\alpha}\,\,.
\end{equation}

We focus on the $B^0_{\infty,\infty}$ factor first. For bounding it, we can directly use Proposition \ref{p:op}: we obtain
\begin{equation} \label{est:R_B^0}
\left\|\mc R(\d_Xu,u)\right\|_{B^{0}_{\infty,\infty}}\,\lesssim\,\left\|\d_Xu\right\|_{B^{0}_{\infty,1}}\,\left\|u\right\|_{B^0_{\infty,\infty}}\,.
%\,\lesssim\,\left\|\d_Xu\right\|_{B^{0}_{\infty,1}}\,,
\end{equation}
%where the second inequality holds true thanks to the embedding $L^\infty\hookrightarrow B^0_{\infty,\infty}$ and estimate \eqref{est:u-inf}.

Next, we consider the factor in \eqref{est:Rem_provvis} presenting the $B^{\alpha\log}_{\infty,\infty}$ norm. We first use
Theorem \ref{t:log-r} and then (since $\alpha>1$ can be chosen as close to $1$ as needed) Proposition \ref{p:log-interp}: we find the estimate
\begin{align*}
\left\|\mc R(\d_Xu,u)\right\|_{B^{\alpha\log}_{\infty,\infty}}\,&\lesssim\,\left\|u\right\|_{B^{\alpha\log}_{\infty,\infty}}\,\left\|\d_Xu\right\|_{B^0_{\infty,1}} \\
&\lesssim\,\left\|u\right\|_{B^0_{\infty,\infty}}\,
\left(1\,+\,\log\left(1\,+\,\frac{\left\|\nabla u\right\|_{L^\infty}}{\|u\|_{B^0_{\infty,\infty}}}\right)\right)^\alpha\,\left\|\d_Xu\right\|_{B^0_{\infty,1}}\,,
\end{align*}
where we have also used the continuous embedding $L^\infty\hookrightarrow B^0_{\infty,\infty}$.

Combining this inequality with \eqref{est:R_B^0}, from \eqref{est:Rem_provvis} we finally find
\begin{align}
\label{est:Rem_final}
 \left\|\mc R(\d_Xu,u)\right\|_{B^0_{\infty,1}}\,\lesssim\,\left\|\d_Xu\right\|_{B^0_{\infty,1}}\,\left\|u\right\|_{B^0_{\infty,\infty}}\,
\left(1\,+\,\log\left(1\,+\,\frac{\left\|\nabla u\right\|_{L^\infty}}{\|u\|_{B^0_{\infty,\infty}}}\right)\right)\,.
\end{align}

We can now gather estimates \eqref{est:T-1}, \eqref{est:T-2} and \eqref{est:Rem_final} together. 
Using again the continuous embedding $L^\infty\hookrightarrow B^0_{\infty,\infty}$ and the fact that 
the function $\beta(z)\,=\,z\,\log\left(e+\frac{c}{z}\right)$ is monotonically
increasing for $z>0$, we deduce
\[
\left\|\d_Xu\cdot u\right\|_{B^0_{\infty,1}}\,\lesssim\,\left\|\d_Xu\right\|_{B^0_{\infty,1}}\,
\left\|u\right\|_{L^\infty}\,
\left(1\,+\,\log\left(1\,+\,\frac{\left\|\nabla u\right\|_{L^\infty}}{\|u\|_{L^\infty}}\right)\right)\,.
\]
Since one has $1+\log(1+z)\leq 2\log(e+z)$ for $z\geq0$, plugging the previous bound into inequality \eqref{est:eta_Besov} yields
\begin{align*}
%\label{est:eta_Besov_fin}
\left\|\eta(t)\right\|_{B^0_{\infty,1}}\,&\lesssim\,U(t)\,\Bigg(\left\|\eta_0\right\|_{B^0_{\infty,1}}\,+\,
\int^t_0\left\|\d_Xu\right\|_{B^0_{\infty,1}}\,
\left\|u\right\|_{L^\infty}\,\log\left(e\,+\,\frac{U'(\t)}{\|u\|_{L^\infty}}\right)\,\dd\t\Bigg)\,.
\end{align*}
Finally, thanks to bound \eqref{est:u-inf} and recalling that $\beta(z)\,=\,z\,\log\left(e+\frac{c}{z}\right)$ is increasing for $z>0$, we find,
for any time $t\in[0,T[\,$, the estimate
\begin{align}
\label{est:eta_Besov_fin}
\left\|\eta(t)\right\|_{B^0_{\infty,1}}\,&\lesssim\,U(t)\,\Bigg(\left\|\eta_0\right\|_{B^0_{\infty,1}}\,+\,
\int^t_0\left\|\d_Xu\right\|_{B^{0}_{\infty,1}}\,\log\left(e\,+\,U'(\t)\right)\,\dd\t\Bigg)\,.
\end{align}

\subsection{End of the argument} \label{ss:end}
We are now ready to conclude the proof of Proposition \ref{p:Du-bound}. We insert inequalities \eqref{est:Drho-B^0} and \eqref{est:eta_Besov_fin}
into \eqref{est:Du_with_B}: by making use of the pointwise assumption \eqref{eq:cont-cond}, we find
\begin{align} \label{est:U'-provvis}
U'(t)\,\lesssim\,1\,+\,U(t)\,+\,U(t)\,\int^t_0\log\left(e\,+\,U'(\t)\right)\,\dd\t
\end{align}
for any time $t\in[0,T[\,$. Notice that this is the only point where we need the pointwise assumption in \eqref{eq:cont-cond}.
Applying the Jensen inequality to the concave function $\Phi(z)\,:=\,\log\big(e+z\big)$,
we can write
\[
\frac{1}{t}\,\int^t_0\log\left(e\,+\,U'(\t)\right)\,\dd\t\,\leq\,\log\left(e\,+\,\frac{1}{t}\,U(t)\right)\,.
\]
Thanks to this, from \eqref{est:U'-provvis} we infer
\begin{align*}
U'(t)\,&\lesssim\,1\,+\,U(t)\,+\,U(t)\,t\,\log\left(e\,+\,\frac{1}{t}\,U(t)\right) \\
&\lesssim\,1\,+\,U(t)\,+\,U(t)\,T\,\log\left(e\,+\,\frac{1}{T}\,U(t)\right)\,,
\end{align*}
where, in passing from the first to the second inequality, we have used once again the fact that $\beta(z)\,=\,z\,\log\left(e+\frac{c}{z}\right)$ is increasing
%for $z>0$.
on $\,]0,+\infty[\,$.

We observe that the factor $T$ in the previous bound can be treated as an additional multiplicative constant, as this time is fixed in our argument.
Therefore, in the end we have obtained that the function $U(t)$, defined in \eqref{def:U(t)}, satisfies the differential inequality
\[
 U'(t)\,\leq\,C_1\,+\,C_2\,U(t)\,\log\left(e\,+\,U(t)\right)\,,
\]
for suitable positive constants $C_1$ and $C_2$. From this, it is easy to see that the function $U$ must be uniformly bounded for all $t\in[0,T[\,$.
Hence, the integral condition \eqref{eq:cont-condit-Du} is verified, which implies that Proposition \ref{p:Du-bound} is finally proven.
As already remarked, this immediately yields the conclusion of Theorem \ref{th:cont-crit}, which is thus proven as well.

%%%%%%%%%%%%%%%%%%%%%%%%%%%%%%%%%%%%%%%%%%%%%%%%%%%%%%%%%%%%%%%%%%%%%%%%%%%
%%%%%%%%%%%%%%%%%%%%%%%%%%%%%%%%%%%%%%%%%%%%%%%%%%%%%%%%%%%%%%%%%%%%%%%%%%%
%%%%%%%%%%%%%%%%%%%%%%%%%%%%%%%%%%%%%%%%%%%%%%%%%%%%%%%%%%%%%%%%%%%%%%%%%%%

\addcontentsline{toc}{section}{References}
{\small

}

\end{document}